\theoremstyle{plain}
\newtheorem{thm}{Theorem}[section]
\newtheorem{lem}[thm]{Lemma}
\newtheorem{prop}[thm]{Proposition}
\newtheorem{thrm}[thm]{Theorem}
\theoremstyle{Definition}
\newtheorem{example}[thm]{Example}
\newtheorem{defn}[thm]{Definition}
\newtheorem{rem}[thm]{Remark}
\newcommand{\Spec}{\mathrm{Spec}}
\newcommand{\plim}{\mathop{\varinjlim}\limits}
\newcommand{\NCSpec}{NC\mathrm{Spec}}
\newcommand{\Rings}{\mathbf{Rings}}
\author{Richard Vale}
\address{Department of Mathematics, Cornell University, Malott Hall, Ithaca N.Y. 14853-4201, United States}
\title{On the opposite of the category of rings}
\begin{document}
\begin{abstract}
For every ring $R$, we construct a ringed space $\NCSpec(R)$ and for every ring homomorphism $R \rightarrow S$, a morphism of ringed spaces $\NCSpec(S) \rightarrow \NCSpec(R)$. We show that this gives a fully faithful contravariant functor from the category of rings to a category of ringed spaces. If $R$ is a commutative ring, we show that $\NCSpec(R)$ can be viewed as a natural completion of $\mathrm{Spec}(R)$. We then explain how the spaces $\NCSpec(R)$ may be glued, and study quasicoherent sheaves on them. As an example, we compute the category of quasicoherent sheaves on a space constructed from a skew-polynomial ring $R$ by an analogue of the $\mathrm{Proj}$ construction.
\end{abstract}
\maketitle
%{\bf \tiny This is a preliminary version and contains many errors and incomplete references. Please do not circulate.}
\section{Introduction}
A ringed space is a topological space $X$ equipped with a sheaf of rings. It is a well-known fact from algebraic geometry that the category of commutative rings can be identified with a category of ringed spaces. More precisely, there is a contravariant functor from the category of commutative rings to the category of ringed spaces, which associates to a ring $R$ its prime spectrum $\mathrm{Spec}(R)$ equipped with the structure sheaf $\mathcal{O}$. This functor is faithful. The spaces $(\mathrm{Spec}(R),\mathcal{O})$ are called affine schemes. The notion of affine scheme first appeared in \cite{EGA1} and other expositions may be found in \cite[Chapter V]{Shafo}, \cite{Hartshorne} and \cite{EisenbudHarris}. The aim of the present paper is to make a similar construction for rings which are not necessarily commutative.
\section{Conventions}\label{conventions}
Here we list some conventions which will be used in this paper.
\subsection*{}
We denote by $\Rings$ the category of all unital rings \emph{including the zero ring}. That is, all the rings $R$ which we consider have an identity element $1_R$ satisfying $x1_R=1_R x=x$ for all $x \in R$. The zero ring has only one element $0$, and its identity element is $1_0=0$.
We require ring homomorphisms to preserve the identity elements. Thus, the zero ring $0$ is a final object in the category $\Rings$, and if $R$ is not the zero ring, then there is no homomorphism $0 \rightarrow R$.
\begin{defn}
Let $R$ be a ring. An element $x \in R$ is a \emph{unit} if there exists $y \in R$ with $yx=xy=1_R$.
\end{defn}
Note that $0$ is a unit in the zero ring. Note also that if $\theta: R \rightarrow S$ is a ring homomorphism and $u \in R$ is a unit, then $\theta(u)$ is a unit in $S$.
\subsection{Ringed spaces}
As in algebraic geometry, we consider topological spaces with sheaves of rings on them.
\begin{defn}
If $X$ is a topological space then we denote by $\Omega(X)$ the collection of all open subsets of $X$. We regard $\Omega(X)$ as a poset ordered by inclusion, and hence also as a category.
\end{defn}
%\begin{defn}\label{qhomeo}
%Let $X$ and $Y$ be topological spaces. A continuous function $q: X \rightarrow Y$ is called a \emph{quasihomeomorphism} if $q^{-1}:\Omega(Y) \rightarrow \Omega(X)$ is a bijection.
%\end{defn}
\begin{defn}
A \emph{sheaf of rings} on a topological space $X$ is a functor $\Omega(X)^{op} \rightarrow \Rings$ satisfying the sheaf axioms. (For the sheaf axioms, see \cite[V.2]{Shafo}.)
\end{defn}
\begin{defn}
A \emph{ringed space} is a pair $(X, \mathcal{O}_X)$ consisting of a topological space $X$ together with a sheaf of rings $\mathcal{O}_X$ on $X$.

A morphism $(X, \mathcal{O}_X) \rightarrow (Y, \mathcal{O}_Y)$ of ringed spaces is a pair $(\phi,\phi^\#)$ where $\phi : X \rightarrow Y$ is a continuous function and $\phi^\# : \mathcal{O}_Y \rightarrow \phi_* \mathcal{O}_X$ is a morphism of sheaves of rings on $Y$.

We denote the category of ringed spaces by $\mathbf{RingedSp}$.
\end{defn}
Notice that our definition of ringed space differs from the usual one in algebraic geometry, in which the rings are assumed to be commutative. We wish to allow noncommutative rings. We are now ready to state our aim more precisely.
\section{Aim}\label{aim}
We wish to construct a faithful functor $\Rings^{op} \rightarrow \mathbf{RingedSp}$. In other words, we want to associate to each ring $R$ a ringed space $\NCSpec(R)$ in a functorial way. There is one very easy solution to this problem, namely for each ring $R$, take a one-point space with the sheaf whose sections over the unique nonempty open set are $R$. However, such a construction would not be very interesting. We therefore also wish our construction to coincide with $\mathrm{Spec}(R)$ when $R$ happens to be a commutative ring. We are not able to achieve this, but we will show that when $R$ is commutative, $\mathrm{Spec}(R)$ embeds naturally in the complement of the generic point in $\NCSpec(R)$ as a dense subspace.

The structure of this paper is as follows. In Section \ref{localizationsn}, we describe various functorial properties of the localization of a ring $R$ at a subset $A \subset R$, which is defined via a universal property. We put the localizations at finite subsets together to obtain a partially ordered set $L(R)$ from a ring $R$. In Section \ref{Alexandrov}, we equip $L(R)$ with a topology and a sheaf of rings. In Section \ref{sobersn}, we recall the soberification construction from topology. The soberification of $L(R)$ inherits a sheaf of rings from $L(R)$, and this is the space which we call $\NCSpec(R)$. In Section \ref{functoralitysn}, we prove our main theorem, Theorem \ref{maintheorem2}, which says that $\NCSpec$ is a fully faithful functor from $\Rings^{op}$ to a subcategory of $\mathbf{RingedSp}$ which we describe. In Section \ref{commutativesn}, we explain how $\Spec(R)$ embeds naturally in $\NCSpec(R)$ when $R$ is commutative, and we show that $\NCSpec(R)$ may be viewed as a certain completion of $\Spec(R)$ in a category of based $T_0$--spaces. We then give some examples in Section \ref{examplessn}. In Section \ref{sheavesandglueing}, we explain how the spaces $\NCSpec(R)$ may be glued along Ore localizations and define quasicoherent sheaves on them. After some calculations, we show in Proposition \ref{qcoh=grmod/tors}, as an example, that the category of left quasicoherent sheaves on a space constructed from the skew-polynomial ring $R=\mathbb{C}_\lambda[x_1, \ldots, x_n]$ is equivalent to the category of graded left $R$--modules modulo torsion. Finally, in Section \ref{conclusion}, we discuss how our definition of $\NCSpec(R)$ may be related to earlier notions defined by other authors.
\subsection{Acknowledgements}
The author wishes to thank Sefi Ladkani for providing useful information on the Alexandrov topology and sheaves. The author wishes to thank Yuri Berest, Greg Muller, Michael Wemyss and Geordie Williamson for valuable comments.
\section{Localization}\label{localizationsn}
Our constructions use the technique of noncommutative localization of a ring. In this section, we collect the definition and basic properties of this construction.
\begin{defn}
Let $R$ be a ring and let $A$ be a subset of $R$. A \emph{localization} of $R$ at $A$ consists of a ring $loc(R,A)$ together with a ring homomorphism $\alpha_A: R \rightarrow loc(R,A)$ such that $\alpha_A(a)$ is a unit for all $a \in A$ and such that the following universal property holds.

If $\theta: R \rightarrow S$ is a ring homomorphism such that $\theta(a)$ is a unit for all $a \in A$, then there exists a unique ring homomorphism $\theta' : loc(R,A)\rightarrow S$ such that the following diagram commutes.
$$
\xymatrix{
R \ar[drr]_{\theta} \ar[rr]^{\alpha_A} && loc(R,A) \ar[d]^{\theta'}\\
&& S}$$
\end{defn}
For every ring $R$ and every subset $A$ of $R$, the ring $loc(R,A)$ exists, by \cite[Theorem 2.1]{Cohn}. It may be constructed by adjoining elements $\{t_a: a \in A\}$ to $R$, and quotienting the resulting ring by the relations $at_a=t_aa=1$ for all $a \in A$. The ring $loc(R,A)$ can of course be zero. For example, for any ring $R$, if $A=\{0\}$ then $loc(R,A)=0$. Or if $R=M_n(\mathbb{C})$ and $x \in R$ is a singular matrix then $loc(R,\{x\})=0$ (see Example \ref{matrix}). As a matter of notation, we will often write $loc(R,x)$ instead of $loc(R,\{x\})$ when we are localizing at a single element.

In general $loc(R,A)$ is diffcult to calculate explicitly. However, it has very nice functorial properties which we now list.
\begin{prop}\label{epi}
For any ring $R$ and any subset $E \subset R$, the ring homomorphism $\alpha_E :R \rightarrow loc(R,E)$ is epic in the category of rings (meaning that if $T$ is a ring and $f,g: loc(R,E) \rightarrow T$ are ring homomorphisms with $f\alpha_E=g\alpha_E$ then $f=g$).
\end{prop}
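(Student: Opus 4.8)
The plan is to obtain this as a purely formal consequence of the universal property that defines $loc(R,E)$, using no information whatsoever about the internal structure of the localization. So suppose $T$ is a ring and $f,g : loc(R,E) \to T$ are ring homomorphisms with $f\alpha_E = g\alpha_E$; the goal is to conclude $f=g$. First I would set $\theta := f\alpha_E = g\alpha_E : R \to T$ and check that $\theta$ satisfies the hypothesis of the universal property of localization: for each $a \in E$ the element $\alpha_E(a)$ is a unit of $loc(R,E)$ by the very definition of a localization, and since ring homomorphisms carry units to units (as noted in Section \ref{conventions}), $\theta(a) = f(\alpha_E(a))$ is a unit of $T$ for every $a \in E$.

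With that verified, the universal property of $loc(R,E)$ supplies a \emph{unique} ring homomorphism $\theta' : loc(R,E) \to T$ with $\theta'\alpha_E = \theta$. But $f$ and $g$ are both ring homomorphisms $loc(R,E) \to T$ whose composite with $\alpha_E$ equals $\theta$, so the uniqueness clause forces $f = \theta' = g$. Since $T$, $f$ and $g$ were arbitrary, this is exactly the assertion that $\alpha_E$ is epic in $\Rings$.

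There is no real obstacle to this argument; it is essentially a restatement of the universal property. The only point deserving (minimal) attention is the step in the first paragraph: one must observe that \emph{any} homomorphism out of $loc(R,E)$ automatically sends each $\alpha_E(a)$ with $a \in E$ to a unit, which is precisely what allows one to invoke the universal property for the composite $\theta$ rather than needing to treat $f$ or $g$ individually. If desired, one could instead deduce the statement from the general characterization of ring epimorphisms, but the direct appeal to the defining universal property is the cleanest route and is the one I would take.
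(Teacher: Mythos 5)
Your argument is correct and is exactly the paper's proof, just written out in full: the paper also deduces the statement immediately from the uniqueness clause of the universal property, applied to the common composite $f\alpha_E=g\alpha_E$ (whose values on $E$ are units because ring homomorphisms preserve units). No gap, and no difference in approach.
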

\begin{proof}
The proposition follows immediately from the uniqueness of the map $loc(R,E) \rightarrow T$ given by the universal property.
\end{proof}
\begin{defn}\label{preorderdef}
Let $R$ be a ring. If $A$ and $B$ are subsets of $R$ then we write $A \preceq B$ to mean that $\alpha_B(a)$ is a unit in $loc(R,B)$ for all $a \in A$.
\end{defn}
The relation $\preceq$ is well-defined because $loc(R,B)$ is unique up to canonical isomorphism, since it is defined via a universal property.
\begin{prop}\label{preorder}
Let $R$ be a ring. Suppose $A$ and $B$ are subsets of $R$ and $A \preceq B$. Then there is a unique ring homomorphism $p_{BA}:loc(R,A)\rightarrow loc(R,B)$ making the following diagram commute.
$$\xymatrix{
loc(R,A) \ar[rr]^{p_{BA}}&& loc(R,B)\\
& R \ar[ul]^{\alpha_A} \ar[ur]_{\alpha_B}&
}$$
For all $A$, $p_{AA}$ is the identity $loc(R,A) \rightarrow loc(R,A)$.
If $A \preceq B$ and $B \preceq C$ then $A \preceq C$ and $p_{CA}=p_{CB}p_{BA}$.
\end{prop}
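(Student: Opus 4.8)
The plan is to deduce everything from the universal property of localization, together with the observation recorded in Section~\ref{conventions} that a ring homomorphism carries units to units; no explicit knowledge of the construction of $loc(R,A)$ is needed.

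First I would establish the existence and uniqueness of $p_{BA}$. Since $A \preceq B$, the homomorphism $\alpha_B : R \rightarrow loc(R,B)$ sends every element of $A$ to a unit. Applying the universal property of $loc(R,A)$ to $\theta = \alpha_B$ yields a unique ring homomorphism $\theta' : loc(R,A) \rightarrow loc(R,B)$ with $\theta'\alpha_A = \alpha_B$; this $\theta'$ is by definition $p_{BA}$, and uniqueness is precisely the uniqueness clause of the universal property. (Alternatively, uniqueness follows from Proposition~\ref{epi}, since $\alpha_A$ is epic.)

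For the identity statement, I note that $A \preceq A$ holds trivially, since $\alpha_A(a)$ is a unit for every $a \in A$ by the definition of a localization. Both $p_{AA}$ and $\Id_{loc(R,A)}$ are homomorphisms $loc(R,A) \rightarrow loc(R,A)$ whose composite with $\alpha_A$ equals $\alpha_A$, so they coincide by the uniqueness just established (equivalently, because $\alpha_A$ is epic).

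For the composition law, suppose $A \preceq B$ and $B \preceq C$. To see $A \preceq C$, take $a \in A$: then $\alpha_B(a)$ is a unit in $loc(R,B)$, so its image $p_{CB}(\alpha_B(a)) = \alpha_C(a)$ is a unit in $loc(R,C)$ because ring homomorphisms preserve units; hence $p_{CA}$ is defined. Both $p_{CA}$ and $p_{CB}p_{BA}$ are homomorphisms $loc(R,A) \rightarrow loc(R,C)$, and composing with $\alpha_A$ gives $\alpha_C$ in the first case and $p_{CB}(p_{BA}\alpha_A) = p_{CB}\alpha_B = \alpha_C$ in the second, so they agree by uniqueness. The whole argument is a routine diagram chase; the only point requiring any thought is the verification that $A \preceq C$, which is exactly where the ``units map to units'' principle is essential — without it one could not even assert that $p_{CA}$ exists, let alone factor it.
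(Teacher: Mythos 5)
Your proof is correct and follows essentially the same route as the paper: existence and uniqueness of $p_{BA}$ from the universal property of $loc(R,A)$ applied to $\alpha_B$, the identity case by uniqueness, and $A \preceq C$ via the fact that $\alpha_C(a) = p_{CB}\alpha_B(a)$ is a unit because ring homomorphisms preserve units, with $p_{CA} = p_{CB}p_{BA}$ again by uniqueness. No issues.
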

\begin{proof}
If $A \preceq B$ then $p_{BA}$ exists by the universal property of $loc(R,B)$. Since the identity homomorphism satisfies the definition of $p_{AA}$, we get that $p_{AA}$ must be the identity.

If $A \preceq B$ and $B \preceq C$ then the two triangles in the following diagram commute.
$$\xymatrix{
loc(R,A) \ar[rr]^{p_{BA}}&& loc(R,B)\ar[rr]^{p_{CB}} && loc(R,C)\\
   &&    R\ar[ull]^{\alpha_A}\ar[u]^{\alpha_B} \ar[urr]_{\alpha_C}
                 &&
}$$
If $a \in A$ then $\alpha_C(a)=p_{CB}\alpha_B(a)=p_{CB}p_{BA}\alpha_A(a)$ which is a unit in $loc(R,C)$ since ring homomorphisms take units to units. Therefore $A \preceq C$ and there is a unique ring homomorphism $p_{CA} : loc(R,A) \rightarrow loc(R,C)$ satisfying $\alpha_C=p_{CA}\alpha_A$. By uniqueness, $p_{CA}=p_{CB}p_{BA}$.
\end{proof}
\begin{prop}\label{thetaa}
Let $\theta: R \rightarrow S$ be a ring homomorphism and let $A$ be a subset of $R$. Let $\alpha_{\theta(A)}: S \rightarrow loc(S,\theta(A))$ be the localization of $S$ at the subset $\theta(A)$. Then there is a unique ring homomorphism $\theta_A: loc(R,A) \rightarrow loc(S, \theta(A))$ such that the following diagram commutes.
$$
\xymatrix{
R \ar[d]_{\alpha_A}\ar[rr]^\theta && S\ar[d]^{\alpha_{\theta(A)}}\\
loc(R,A) \ar[rr]^{\theta_A}  && loc(S,\theta(A))
}$$
\end{prop}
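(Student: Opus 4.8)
The plan is to construct $\theta_A$ directly from the universal property of $loc(R,A)$. Consider the composite ring homomorphism $\alpha_{\theta(A)} \circ \theta : R \rightarrow loc(S, \theta(A))$. I claim this map sends every $a \in A$ to a unit: indeed $\theta(a) \in \theta(A)$, so $\alpha_{\theta(A)}(\theta(a))$ is a unit in $loc(S,\theta(A))$ by the defining property of the localization $\alpha_{\theta(A)}$. Hence the universal property of $\alpha_A : R \rightarrow loc(R,A)$ applies to $\theta' := \alpha_{\theta(A)}\circ\theta$ and yields a unique ring homomorphism $\theta_A : loc(R,A) \rightarrow loc(S,\theta(A))$ with $\theta_A \circ \alpha_A = \alpha_{\theta(A)}\circ\theta$, which is precisely the assertion that the square commutes.

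It remains to address uniqueness of $\theta_A$ among all maps making the square commute. This is immediate from Proposition \ref{epi}: any ring homomorphism $\psi : loc(R,A) \rightarrow loc(S,\theta(A))$ with $\psi \circ \alpha_A = \alpha_{\theta(A)}\circ\theta$ satisfies $\psi\circ\alpha_A = \theta_A\circ\alpha_A$, and since $\alpha_A$ is epic we conclude $\psi = \theta_A$. Alternatively one can simply invoke the uniqueness clause already built into the universal property used to produce $\theta_A$.

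I do not expect any genuine obstacle here; the statement is a formal consequence of the universal property, and the proof is essentially a diagram chase. The only point requiring a moment's care is the very first step — verifying that $\alpha_{\theta(A)}\circ\theta$ inverts the elements of $A$ (as opposed to the elements of $\theta(A)$) — but this is exactly the observation, already recorded in the conventions, that ring homomorphisms carry units to units, applied to the inclusion $\theta(A) \subseteq loc(S,\theta(A))^{\times}$ pulled back along $\theta$. So the write-up will be short: exhibit the composite, check it kills the relevant non-units, invoke the universal property for existence, and invoke either the same universal property or Proposition \ref{epi} for uniqueness.
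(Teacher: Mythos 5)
Your proposal is correct and follows essentially the same argument as the paper: observe that $(\alpha_{\theta(A)}\theta)(a)$ is a unit for every $a\in A$ because $\theta(a)\in\theta(A)$, then invoke the universal property of $loc(R,A)$ for both existence and uniqueness of $\theta_A$. The only superfluous remark is the appeal to ``homomorphisms carry units to units'' at the end --- the inversion of $\theta(A)$ is part of the defining property of $\alpha_{\theta(A)}$, as your main argument already uses --- but this does not affect correctness.
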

\begin{proof}
For all $a\in A$, $\alpha_{\theta(A)}(\theta(a))=(\alpha_{\theta(A)}\theta)(a)$ is a unit in $loc(S,\theta(A))$. Therefore, a unique such $\theta_A$ exists by the universal property of $loc(R,A)$.
\end{proof}
\begin{prop}\label{theta}
Let $R$ be a ring and let $A \preceq B$ be subsets of $R$. Let $\theta: R \rightarrow S$ be a ring homomorphism. Then $\theta(A) \preceq \theta(B)$ and the following diagram commutes.
$$
\xymatrix{
loc(R,A) \ar[d]_{p_{BA}} \ar[rr]^{\theta_A}&& loc(S,\theta(A))\ar[d]^{p_{\theta(B)\theta(A)}}\\
loc(R,B) \ar[rr]^{\theta_B}&&loc(S,\theta(B))
}$$
Furthermore, the above diagram is a pushout in the category of rings.
\end{prop}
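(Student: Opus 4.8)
The plan is to establish the three assertions one at a time, leaning throughout on Proposition \ref{epi}: since every structure map $\alpha_{(-)}$ is an epimorphism of rings, two ring homomorphisms out of a localization that agree after precomposition with the relevant $\alpha$ must be equal. This reduces each verification below to a short chase of elements coming from $R$ or $S$.

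First, for $\theta(A) \preceq \theta(B)$, I would apply Proposition \ref{thetaa} to the subset $B$, obtaining a homomorphism $\theta_B : loc(R,B) \to loc(S,\theta(B))$ with $\theta_B \alpha_B = \alpha_{\theta(B)}\theta$. Given $a \in A$, the hypothesis $A \preceq B$ says $\alpha_B(a)$ is a unit in $loc(R,B)$, so applying $\theta_B$ shows $\alpha_{\theta(B)}(\theta(a)) = \theta_B(\alpha_B(a))$ is a unit in $loc(S,\theta(B))$. By Definition \ref{preorderdef} this is precisely $\theta(A) \preceq \theta(B)$, so in particular $p_{\theta(B)\theta(A)}$ is defined and the square makes sense.

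Second, for commutativity: both $p_{\theta(B)\theta(A)}\theta_A$ and $\theta_B p_{BA}$ are homomorphisms $loc(R,A) \to loc(S,\theta(B))$, so by Proposition \ref{epi} it suffices to compare them after precomposing with $\alpha_A$. Using $\theta_A \alpha_A = \alpha_{\theta(A)}\theta$ (Proposition \ref{thetaa}) together with $p_{\theta(B)\theta(A)}\alpha_{\theta(A)} = \alpha_{\theta(B)}$ (Proposition \ref{preorder}, applied in $S$ to $\theta(A)\preceq\theta(B)$) gives $p_{\theta(B)\theta(A)}\theta_A\alpha_A = \alpha_{\theta(B)}\theta$; using $p_{BA}\alpha_A = \alpha_B$ and $\theta_B\alpha_B = \alpha_{\theta(B)}\theta$ gives $\theta_B p_{BA}\alpha_A = \alpha_{\theta(B)}\theta$ as well, so the two sides coincide.

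Finally, for the pushout property, I would take a ring $T$ with homomorphisms $f : loc(R,B) \to T$ and $g : loc(S,\theta(A)) \to T$ satisfying $f p_{BA} = g\theta_A$, and produce the unique $h : loc(S,\theta(B)) \to T$ with $h\theta_B = f$ and $hp_{\theta(B)\theta(A)} = g$. The main obstacle — really the only step that requires thought — is identifying the correct homomorphism $S \to T$ to feed into the universal property of $loc(S,\theta(B))$ and checking it inverts all of $\theta(B)$, not merely $\theta(A)$. I claim $g\alpha_{\theta(A)}$ does the job: the relations above give $g\alpha_{\theta(A)}\theta = g\theta_A\alpha_A = fp_{BA}\alpha_A = f\alpha_B$ as maps $R \to T$, so for $b \in B$ the element $g(\alpha_{\theta(A)}(\theta(b))) = f(\alpha_B(b))$ is a unit in $T$; this is exactly where the compatibility $fp_{BA}=g\theta_A$ is used. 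The universal property of $loc(S,\theta(B))$ then yields a unique $h$ with $h\alpha_{\theta(B)} = g\alpha_{\theta(A)}$, and one checks $hp_{\theta(B)\theta(A)} = g$ and $h\theta_B = f$ by precomposing with the epimorphisms $\alpha_{\theta(A)}$ and $\alpha_B$ respectively, while uniqueness of $h$ follows because $\alpha_{\theta(B)}$ is epic. Everything apart from the unit check is formal manipulation with these epimorphisms.
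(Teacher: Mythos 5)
Your proof is correct and follows essentially the same route as the paper: unit-checks via $\theta_B$ and the compatibility $\alpha_{\theta(B)}\theta=\theta_B\alpha_B$, the square's commutativity by comparing both composites after precomposing with $\alpha_A$, and the pushout by feeding $g\alpha_{\theta(A)}$ (the paper's $\lambda\alpha_{\theta(A)}$) into the universal property of $loc(S,\theta(B))$ and then using that the localization maps are ring epimorphisms. The only cosmetic difference is that for commutativity the paper phrases the comparison through an auxiliary map $\gamma$ given by the universal property rather than invoking Proposition \ref{epi} directly, which is the same argument.
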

\begin{proof}
Consider the following commutative diagram.
$$
\xymatrix{
loc(R,A) \ar[dd]_{p_{BA}} \ar[rrr]^{\theta_A} &&& loc(S,\theta(A))
\\
  & R \ar[ul]^{\alpha_A}\ar[dl]_{\alpha_B} \ar[r]^\theta & S \ar[ur]_{\alpha_{\theta(A)}} \ar[dr]_{\alpha_{\theta(B)}}&\\
  loc(R,B) \ar[rrr]^{\theta_B} &&& loc(S,\theta(B))
  }
  $$
If $a \in A$, then $\alpha_{\theta(B)}(\theta(a))=\theta_Bp_{BA}\alpha_A(a)$ is a unit, so $\theta(A) \preceq \theta(B)$ and there exists a unique $p_{\theta(B)\theta(A)}$ making the right hand triangle in the diagram commute.
$$
\xymatrix{
loc(R,A) \ar[dd]_{p_{BA}} \ar[rrr]^{\theta_A} &&& loc(S,\theta(A))
\ar[dd]^{p_{\theta(B)\theta(A)}}
\\
  & R \ar[ul]^{\alpha_A}\ar[dl]_{\alpha_B} \ar[r]^\theta & S \ar[ur]_{\alpha_{\theta(A)}} \ar[dr]_{\alpha_{\theta(B)}}&\\
  loc(R,B) \ar[rrr]^{\theta_B} &&& loc(S,\theta(B))
  }
  $$
We wish to show that the outer square commutes. But $\alpha_{\theta(B)}(\theta(a))$ is a unit for all $a \in A$, and therefore there exists a unique $\gamma: loc(R,A) \rightarrow loc(S, \theta(B))$ with $\alpha_{\theta(B)}\theta =\gamma\alpha_A$. Commutativity of the triangles and the two small squares implies that both $\theta_B p_{BA}$ and $p_{\theta(B)\theta(A)}\theta_A$ satisfy the defining property of $\gamma$ and therefore $\theta_B p_{BA} =p_{\theta(B)\theta(A)}\theta_A$, as required.

Now we wish to show that the given diagram is a pushout. Suppose we are given a ring $T$ and ring homomorphisms $\lambda: loc(S,\theta(A))\rightarrow T$ and $\mu : loc(R,B) \rightarrow T$ such that $\mu p_{BA}=\lambda \theta_A$. Then consider the diagram
$$
\xymatrix{
loc(R,A) \ar[dd]_{p_{BA}} \ar[rrr]^{\theta_A} &&& loc(S,\theta(A))
\ar[dd]_{p_{\theta(B)\theta(A)}} \ar[dddr]^\lambda &
\\
  & R \ar[ul]^{\alpha_A}\ar[dl]_{\alpha_B} \ar[r]^\theta & S \ar[ur]^{\alpha_{\theta(A)}} \ar[dr]_{\alpha_{\theta(B)}}&&\\
  loc(R,B) \ar[rrr]^{\theta_B} \ar[rrrrd]_{\mu} &&& loc(S,\theta(B))&\\
  &&&& T}
  $$
If $b \in B$ then $\lambda \alpha_{\theta(A)} \theta(b)=\lambda \theta_A \alpha_A(b)= \mu p_{BA} \alpha_A (b) = \mu \alpha_B(b)$ is a unit in $T$. So there exists a unique %$\gamma: loc(R,B) \rightarrow T$ with $\gamma \alpha_B =\lambda \alpha_{\theta(A)}\theta$ and also there exists a unique
$\rho: loc(S,\theta(B)) \rightarrow T$ with $\rho \alpha_{\theta(B)}=\lambda \alpha_{\theta(A)}$.
Now, $\rho p_{\theta(B)\theta(A)} \alpha_{\theta(A)}=\rho \alpha_{\theta(B)} =\lambda \alpha_{\theta(A)}$ and so $\rho p_{\theta(B)\theta(A)}=\lambda$  by Proposition \ref{epi}. Also, $\rho \theta_B \alpha_B= \rho \alpha_{\theta(B)}\theta=\lambda \alpha_{\theta(A)}\theta =\lambda \theta_A \alpha_A =\mu p_{BA} \alpha_A =\mu \alpha_B$, and so $\rho \theta_B=\mu$ by Proposition \ref{epi}.

%Now, $\rho \theta_B \alpha_B =\rho \theta_B p_{BA} \alpha_A = \rho p_{\theta(B)\theta(A)} \theta_A \alpha_A =\rho p_{\theta(B)\theta(A)} \alpha_{\theta(A)}\theta = \rho \alpha_{\theta(B)} \theta =\lambda \alpha_{\theta(A)}\theta$. But also $\mu \alpha_B =\lambda \alpha_{\theta(A)}\theta$ and so by uniqueness of $\gamma$, $\gamma=\mu=\rho \theta_B$. Next, if $a \in A$ then $\lambda \alpha_{\theta(A)}\theta(a)$ is a unit, so there exists a unique $\delta :loc(S,\theta(A))\rightarrow T$ with $\delta \alpha_{\theta(A)}=\lambda \alpha_{\theta(A)}$. By uniqueness, we get $\delta=\lambda=\rho p_{\theta(B)\theta(A)}$.
Therefore, the following diagram commutes.
$$
\xymatrix{
loc(R,A) \ar[d]_{p_{BA}} \ar[rr]^{\theta_A}&& loc(S,\theta(A))\ar[d]_{p_{\theta(B)\theta(A)}} \ar[ddr]^\lambda &\\
loc(R,B) \ar[rrrd]_\mu \ar[rr]^{\theta_B}&&loc(S,\theta(B))\ar[dr]^\rho &\\
&&& T
}$$
It remains to show that $\rho$ is unique. If $\rho' : loc(S,\theta(B)) \rightarrow T$ is another ring homomorphism making the diagram commute, then $\rho' \alpha_{\theta(B)} = \rho' p_{\theta(B)\theta(A)} \alpha_{\theta(A)} =\lambda \alpha_{\theta(A)} =\rho \alpha_{\theta(B)}$. Therefore $\rho=\rho'$ by Proposition \ref{epi}.
\end{proof}
Now we explain how to put the various localizations $loc(R,A)$ together to make an ordered set. Recall that a \emph{preorder} on a set is a reflexive, transitive binary relation.
\begin{defn}
Let $R$ be a ring. We denote by $L_0(R)$ the set of all \emph{finite} subsets of $R$.
\end{defn}
\begin{lem}
The relation $A \preceq B$ of Definition \ref{preorderdef} is a preorder on $L_0(R)$.
\end{lem}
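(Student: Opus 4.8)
The plan is to verify the two defining properties of a preorder --- reflexivity and transitivity --- directly from the universal property of localization together with the results already established above. Almost all of the work has in fact been done, so the proof will be short.

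For reflexivity, I would note that for any finite subset $A \subset R$ the very definition of a localization of $R$ at $A$ requires that $\alpha_A(a)$ be a unit in $loc(R,A)$ for every $a \in A$. By Definition \ref{preorderdef} this is precisely the statement $A \preceq A$, so $\preceq$ is reflexive on $L_0(R)$.

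For transitivity, suppose $A, B, C \in L_0(R)$ with $A \preceq B$ and $B \preceq C$. The assertion $A \preceq C$ is one of the conclusions of Proposition \ref{preorder}, so I would simply invoke it. For completeness one can recall the mechanism: from $B \preceq C$ we obtain $p_{CB} \colon loc(R,B) \to loc(R,C)$ with $\alpha_C = p_{CB}\alpha_B$, and from $A \preceq B$ we obtain $p_{BA}$ with $\alpha_B = p_{BA}\alpha_A$; hence for $a \in A$ we have $\alpha_C(a) = p_{CB}p_{BA}\alpha_A(a)$, which is a unit in $loc(R,C)$ because $\alpha_A(a)$ is a unit and ring homomorphisms carry units to units. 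Thus $A \preceq C$.

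Since $L_0(R)$ consists of finite subsets of $R$ there is nothing further to check; indeed the same reasoning shows $\preceq$ is a preorder on the collection of all subsets of $R$. I do not anticipate any genuine obstacle: the substantive content lies in the universal-property setup and in Proposition \ref{preorder}, and this lemma is essentially a repackaging of those facts.
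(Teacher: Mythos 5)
Your proof is correct and follows essentially the same route as the paper, which simply cites Proposition \ref{preorder}: reflexivity is immediate from the defining property of $\alpha_A$, and transitivity is exactly the statement (and argument) of that proposition. Nothing further is needed.
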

\begin{proof}
Follows from Proposition \ref{preorder}.
\end{proof}
It is a standard fact (which is easy to check) that any preorder $\preceq$ on a set $X$ defines an equivalence relation $\sim$ via $a \sim b$ if and only if $a \preceq b$ and $b \preceq a$, and that the set of equivalence classes $\{[a]: a \in X\}$ becomes a partially ordered set (poset) under the ordering $[a] \le [b]$ if and only if $a \preceq b$.
\begin{defn}
Let $R$ be a ring. We denote by $(L(R), \le)$ the poset of equivalence classes $L_0(R)/\sim$, with the partial order $\le$ induced by the preorder $\preceq$.

We will usually denote the equivalence class of $A \in L_0(R)$ by $R_A$.
\end{defn}
Although the symbol $R_A$ does not denote a ring, the notation is supposed to be reminiscent of the notation often used for localizations of commutative rings. So for example if $R$ is commutative and $f \in R$ then $R_{\{f\}}=R_{\{f^2\}}$ etc.

Before proceeding, we note here one useful property of the ordering on $L(R)$.
\begin{defn}\label{jsem}
If $(X, \le)$ is a poset and $a,b \in X$ then a \emph{join} of $a$ and $b$ is a least upper bound for $a$ and $b$. The poset $X$ is called a \emph{join semilattice} if every pair of elements of $X$ have a join.
\end{defn}
\begin{prop}\label{join}
Let $R$ be a ring. Then $(L(R),\le)$ is a join semilattice.
\end{prop}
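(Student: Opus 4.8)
The plan is to show that the join of $R_A$ and $R_B$ in $(L(R),\le)$ is represented by the union: for finite subsets $A,B\subset R$, the element $R_{A\cup B}$ is a least upper bound of $R_A$ and $R_B$. First note that $A\cup B$ is again a finite subset of $R$, so $R_{A\cup B}$ really is an element of $L(R)$; this is exactly the reason for working with $L_0(R)$, the set of \emph{finite} subsets, rather than all subsets. Since every element of $L(R)$ is of the form $R_A$ for some $A\in L_0(R)$, proving that $R_A$ and $R_B$ have a join for arbitrary representatives $A,B$ establishes the proposition.

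Next I would check that $R_{A\cup B}$ is an upper bound. By the very definition of the localization $\alpha_{A\cup B}:R\to loc(R,A\cup B)$, the element $\alpha_{A\cup B}(c)$ is a unit in $loc(R,A\cup B)$ for every $c\in A\cup B$. In particular this holds for every $c\in A$ and for every $c\in B$, which is precisely the statement (Definition \ref{preorderdef}) that $A\preceq A\cup B$ and $B\preceq A\cup B$. Passing to equivalence classes, $R_A\le R_{A\cup B}$ and $R_B\le R_{A\cup B}$.

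Finally I would verify minimality. Suppose $R_C$ is any upper bound of $R_A$ and $R_B$, with $C\in L_0(R)$; then $A\preceq C$ and $B\preceq C$, so $\alpha_C(a)$ is a unit in $loc(R,C)$ for all $a\in A$ and all $a\in B$. Given any $x\in A\cup B$ we have $x\in A$ or $x\in B$, hence $\alpha_C(x)$ is a unit in $loc(R,C)$; this says $A\cup B\preceq C$, i.e. $R_{A\cup B}\le R_C$. Therefore $R_{A\cup B}$ is the least upper bound of $R_A$ and $R_B$, so $(L(R),\le)$ is a join semilattice. I do not expect any genuine obstacle here: every step is an immediate unwinding of the definition of $\preceq$ and of the universal property of localization, and the independence of $R_{A\cup B}$ from the chosen representatives is automatic once one observes that least upper bounds are unique when they exist.
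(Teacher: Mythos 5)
Your proof is correct and follows essentially the same route as the paper: take representatives $A,B\in L_0(R)$, observe that $\alpha_{A\cup B}$ inverts every element of $A\cup B$ so $R_{A\cup B}$ is an upper bound, and check that any common upper bound $R_C$ satisfies $A\cup B\preceq C$, so $R_{A\cup B}$ is the join. Nothing is missing.
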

\begin{proof}
Let $R_A$ and $R_B$ be two equivalence classes in $L(R)$ with equivalence class representatives $A, B \in L_0(R)$ respectively. Then $A \cup B \in L_0(R)$, and $\alpha_{A\cup B}(a)$ and $\alpha_{A \cup B}(b)$ are units in $loc(R, A \cup B)$ for all $a \in A$ and all $b \in B$. Therefore, $R_A \le R_{A \cup B}$ and $R_B \le R_{A \cup B}$. Furthermore, if $R_A \le R_C$ and $R_B \le R_C$ then $\alpha_C(a)$ and $\alpha_C(b)$ are units in $loc(R,C)$ for all $a \in A$ and all $b \in B$. Therefore, $\alpha_C(x)$ is a unit in $loc(R,C)$ for all $x \in A \cup B$ and hence $R_{A \cup B} \le R_C$. So $R_{A \cup B}$ is the join of $R_A$ and $R_B$.
\end{proof}
Notice that the proof of Proposition \ref{join} shows that $R_{A \cup B}$ is well-defined, independent of the choice of equivalence class representatives $A$ and $B$.

We refer to $L(R)$ with its ordering $\le$ as the \emph{localization semilattice} of $R$. We now explain how to make $L(R)$ into a ringed space.
\section{The Alexandrov topology}\label{Alexandrov}
We make $L(R)$ into a ringed space by giving it the most obvious topology and then defining a sheaf of rings on this topology in a tautological way. We caution the reader that the space constructed in this section is \emph{not} $\NCSpec(R)$. The ringed space we are about to define is, however, a necessary stepping-stone in the construction of $\NCSpec(R)$.
\begin{defn}
Let $(X, \le)$ be a poset. A subset $U$ of $X$ is called an \emph{upper set} if for all $a,b \in X$, if $a \in U$ and $a \le b$ then $b \in U$. The \emph{Alexandrov topology} on $X$ is the topology whose open sets are the upper sets.
\end{defn}
If $X$ is a poset and $a \in X$ then we denote by $U_a$ the set $\{b \in X: b \ge a\}$. It is easy to check that the Alexandrov topology really is a topology on $X$, and that the $U_a$ form a base of open sets of $X$. The Alexandrov topology is always $T_0$ (meaning that for any two points $a, b \in X$, there exists an open set containing one of $a,b$ but not the other). If $(X, \le_X)$ and $(Y,\le_Y)$ are posets, then a function $f : X \rightarrow Y$ is continuous for the Alexandrov topology if and only if it is order-preserving (meaning by definition that for all $x,y \in X$, $x \le_X y$ implies $f(x) \le_Y f(y)$).

We note here a property of basic open sets in the Alexandrov topology which will be useful in the sequel.
\begin{defn}\label{completeirred}
If $X$ is a topological space then $X$ is called \emph{completely $\cup$--irreducible} if whenever $\{U_\lambda\}_{\lambda \in \Lambda}$ is an open cover of $X$, we have $U_\lambda=X$ for some $\lambda$.
\end{defn}
\begin{prop}\label{basic}
If $(X,\le)$ is a poset with the Alexandrov topology then an open subset $U$ of $X$ is completely $\cup$--irreducible if and only if $U=U_x$ for some $x \in X$.
\end{prop}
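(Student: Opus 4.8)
The plan is to prove both implications directly from the definitions. For the easy direction, suppose $U = U_x$ for some $x \in X$, and let $\{U_\lambda\}_{\lambda \in \Lambda}$ be an open cover of $U_x$. Since $x \in U_x$, there is some $\lambda$ with $x \in U_\lambda$. Because $U_\lambda$ is an upper set, every $b \ge x$ lies in $U_\lambda$, so $U_x \subseteq U_\lambda$; combined with $U_\lambda \subseteq U_x$ (as it is part of a cover of $U_x$), we get $U_\lambda = U_x$. Hence $U_x$ is completely $\cup$--irreducible.

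For the converse, suppose $U$ is a completely $\cup$--irreducible open set. The key observation is that every open set in the Alexandrov topology is a union of the basic open sets it contains: explicitly, $U = \bigcup_{x \in U} U_x$, since each $U_x \subseteq U$ (as $U$ is an upper set containing $x$) and every point of $U$ lies in some such $U_x$ (namely its own). This exhibits $\{U_x\}_{x \in U}$ as an open cover of $U$. By complete $\cup$--irreducibility, $U = U_x$ for some $x \in U$, which is exactly what we want.

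The argument is entirely routine; there is no real obstacle. The only point requiring a moment's care is the reduction in the converse direction — recognizing that the family $\{U_x : x \in U\}$ is genuinely a cover of $U$, which uses both that $U$ is an upper set (so each $U_x \subseteq U$) and that $U$ is open. Note also that we should handle the degenerate case: if $U = \emptyset$, then the empty family is an open cover of $U$ with no member equal to $U$, so $\emptyset$ is not completely $\cup$--irreducible, consistent with the statement since $\emptyset$ is not of the form $U_x$ for any $x$. (Here one should double-check the intended reading of Definition \ref{completeirred} applied to a subspace $U$, namely that an \emph{open} cover means a cover by open subsets of $X$ contained in $U$; the argument above respects this.) With these remarks in place the proof is complete.
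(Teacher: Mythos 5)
Your proof is correct and follows essentially the same route as the paper: the forward direction is exactly the paper's observation that if $U_x=\bigcup U_\lambda$ then $x\in U_\lambda$ for some $\lambda$ and hence $U_x\subseteq U_\lambda$, and the converse is the intended (and standard) cover $U=\bigcup_{x\in U}U_x$ by basic open sets. Your remarks on the empty set and on the reading of the definition are fine but not needed beyond this.
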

Proposition \ref{basic} is trivial to prove because if $U_x = \bigcup U_\lambda$ then $x$ must belong to $U_\lambda$ for some $\lambda$ and so $U_x \subset U_\lambda$. It is useful to us because it expresses the property that an open set belongs to the canonical basis in purely topological terms.

Henceforth we consider $L(R)$ with the Alexandrov topology. For $R_E \in L(R)$, we write $U_E$ for the basic open set $\{R_F: R_F \ge R_E\}$. We now wish to construct a sheaf of rings on $L(R)$.

In fact, for any poset $X$, %and any category $C$
sheaves of rings on $X$ correspond to functors $X \rightarrow \Rings$, where $X$ is regarded as a category with a single arrow $x \rightarrow y$ if and only if $x \le y$. See the proof of an analogous fact in \cite[Sections 2.2, 2.3]{Ladkani}. The definition of $L(R)$ is chosen so that we may define such a functor $L(R) \rightarrow \Rings$ by $R_E \mapsto loc(R,E)$.

In more detail, for each finite subset $E$ of $R$, we fix a localization $\alpha_E: R\rightarrow loc(R,E)$. We define our sheaf $\mathcal{O}$ on basic open sets by $\mathcal{O}(U_E)=loc(R,E)$. To prove that this gives a sheaf, we use the following lemma from algebraic geometry. The lemma and its proof are taken from lecture notes by Vakil \cite[Class 5, Theorem 2.1]{vakil}. It is stated there for sheaves of sets, but the proof works equally well for sheaves of rings.
\begin{lem}\label{vakil1}
Let $X$ be a topological space. Let $\mathcal{B}$ be a base of open sets of $X$ and suppose for each $B \in \mathcal{B}$, we are given a ring $O(B)$, and for each inclusion $B' \subset B$ of basic open sets, a ring homomorphism $res_{B'}^B: O(B) \rightarrow O(B')$ such that $res^B_B$ is the identity for all $B$, and if $B'' \subset B' \subset B$ then $res^B_{B''}=res^{B'}_{B''}res^B_{B'}$. Suppose that the following property holds:

$(*)$ If $B \in \mathcal{B}$ and $B=\bigcup_\alpha B_\alpha$ with $B_\alpha \in \mathcal{B}$ for all $\alpha$, and if we are given $s_\alpha \in O(B_\alpha)$ such that for all $\alpha, \beta$ and all $\mathcal{B} \owns B'' \subset B_\alpha \cap B_\beta$, we have $res^{B_\alpha}_{B''}s_\alpha = res^{B_\beta}_{B''} s_\beta$, then there exists a unique $s \in O(B)$ with $res^B_{B_\alpha}s =s_\alpha$ for all $\alpha$.

Then there is a unique sheaf of rings $\mathcal{O}$ on $X$ extending $O$.
\end{lem}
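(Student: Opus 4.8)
The plan is the standard construction that promotes a presheaf defined on a base to a genuine sheaf, carried out with rings in place of sets. First I would define the candidate sheaf $\mathcal{O}$: for an arbitrary open $U \subseteq X$, let $\mathcal{O}(U)$ be the subring of $\prod_{B \in \mathcal{B},\, B \subseteq U} O(B)$ consisting of the families $(s_B)$ with $res^B_{B'} s_B = s_{B'}$ whenever $B' \subseteq B$, both in $\mathcal{B}$ and both contained in $U$; equivalently $\mathcal{O}(U) = \varprojlim_{B \in \mathcal{B},\, B \subseteq U} O(B)$, the limit over the poset of basic opens contained in $U$. For $V \subseteq U$ the restriction $\mathcal{O}(U) \to \mathcal{O}(V)$ is the obvious forgetful map, and functoriality of these maps is immediate, so $\mathcal{O}$ is a presheaf of rings on $X$.

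Next I would verify that $\mathcal{O}$ extends $O$. For $B_0 \in \mathcal{B}$, compatibility forces each component of a family in $\mathcal{O}(B_0)$ to equal $res^{B_0}_{B} s_{B_0}$, so $(s_B) \mapsto s_{B_0}$ is injective, and it is surjective since $B \mapsto res^{B_0}_B t$ is a compatible family for any $t \in O(B_0)$, by the composition law for the restriction maps. Thus $\mathcal{O}(B_0) \cong O(B_0)$ compatibly with restrictions, and property $(*)$ is not needed for this step.

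The heart of the proof is the sheaf axiom for an open cover $U = \bigcup_i U_i$. For separatedness, if $s, t \in \mathcal{O}(U)$ agree on every $U_i$ and $B \subseteq U$ is basic, cover $B$ by basic opens $B_\alpha$ each inside some $U_{i(\alpha)}$; then $res^B_{B_\alpha} s_B = s_{B_\alpha} = t_{B_\alpha} = res^B_{B_\alpha} t_B$, so $s_B = t_B$ by the uniqueness clause of $(*)$. For gluing, given $s^{(i)} \in \mathcal{O}(U_i)$ agreeing on overlaps, first note that the components $(s^{(i)})_{B'}$ and $(s^{(j)})_{B'}$ coincide for any basic $B' \subseteq U_i \cap U_j$, so there is a well-defined element $t_{B'} \in O(B')$ attached to every basic $B'$ lying in some $U_i$. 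For basic $B \subseteq U$, choose basic opens $B_\alpha$ covering $B$ with each $B_\alpha$ inside some $U_{i(\alpha)}$; the family $(t_{B_\alpha})_\alpha$ satisfies the overlap hypothesis of $(*)$, since both sides restrict to $t_{B''}$ for every basic $B'' \subseteq B_\alpha \cap B_\beta$ (as then $B'' \subseteq U_{i(\alpha)} \cap U_{i(\beta)}$), so $(*)$ yields a unique $s_B \in O(B)$ with $res^B_{B_\alpha} s_B = t_{B_\alpha}$. It then remains to check that $s_B$ is independent of the auxiliary cover $\{B_\alpha\}$, that $B \mapsto s_B$ is itself a compatible family, i.e. a section in $\mathcal{O}(U)$, and that this section restricts to $s^{(i)}$ on each $U_i$; in each case one passes to a common basic refinement of the covers involved and invokes the uniqueness half of $(*)$ again.

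For uniqueness of the extension, I would observe that any sheaf $\mathcal{O}'$ restricting to $O$ on $\mathcal{B}$ satisfies $\mathcal{O}'(U) \cong \varprojlim_{B \in \mathcal{B},\, B \subseteq U} \mathcal{O}'(B) = \varprojlim_{B \in \mathcal{B},\, B \subseteq U} O(B) = \mathcal{O}(U)$: indeed $\{B \in \mathcal{B} : B \subseteq U\}$ is itself an open cover of $U$, and the sheaf axioms identify $\mathcal{O}'(U)$ with the families over that cover agreeing on the overlaps $\mathcal{O}'(B \cap B')$, while separatedness of $\mathcal{O}'$ lets one detect such agreement on the basic opens inside $B \cap B'$. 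This isomorphism is the identity on $\mathcal{B}$ and commutes with restrictions, which gives the asserted uniqueness (up to unique isomorphism). I expect the main obstacle to be the gluing step, and within it the bookkeeping showing that $s_B$ is well defined independently of the chosen basic cover $\{B_\alpha\}$ of $B$ and that $B \mapsto s_B$ is genuinely compatible — the recurring subtlety being that intersections of basic opens need not be basic, so every comparison must be pushed down to basic opens and resolved through the uniqueness part of $(*)$.
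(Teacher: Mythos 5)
Your proof is correct, but it runs along a different track from the paper's. The paper (following Vakil) first builds the stalks $\mathcal{O}_x=\varinjlim_{x\in B\in\mathcal{B}}O(B)$ and then declares $\mathcal{O}(U)$ to be the families of germs $(s_x)_{x\in U}$ that locally come from sections over basic opens; with that definition the presheaf axioms and the sheaf axioms are essentially automatic (it is a sheafification-type construction), and the hypothesis $(*)$ is what makes the canonical map $O(B)\to\mathcal{O}(B)$ an isomorphism. You instead set $\mathcal{O}(U)=\varprojlim_{\mathcal{B}\ni B\subseteq U}O(B)$, the compatible families over basic opens contained in $U$. With your definition the extension property $\mathcal{O}(B_0)\cong O(B_0)$ is trivial (since $B_0$ is the maximum of the index poset, no use of $(*)$ is needed, as you note), and the entire content of $(*)$ is shifted into verifying separatedness and gluing, which you do correctly by covering a basic $B\subseteq U$ with basic opens subordinate to the given cover and invoking the uniqueness clause of $(*)$; the remaining checks you defer (independence of the auxiliary cover, compatibility of $B\mapsto s_B$, restriction to the $s^{(i)}$) do go through by exactly the refinement-plus-uniqueness argument you describe, and your uniqueness argument via separatedness of any extension is also sound, with the correct reading of uniqueness as uniqueness up to canonical isomorphism. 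In short: the paper buys a quick proof of the sheaf property at the cost of working with germs and proving $\mathcal{O}(B)\cong O(B)$ via $(*)$, while your limit construction avoids stalks entirely, makes the extension tautological, and concentrates all the work (and the use of $(*)$) in the sheaf axioms, with the added benefit that the comparison $\mathcal{O}'(U)\cong\varprojlim O(B)$ gives uniqueness directly. Both arguments are at a comparable level of rigor to the paper's sketch.
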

\begin{proof}
We sketch the proof, which works because giving a sheaf on a base is enough information to determine the stalks, and any sheaf may be reconstructed from its stalks.

More precisely, for $x \in X$ we define the stalk $\mathcal{O}_x =\plim_{x \in B \in \mathcal{B}}O(B)$. For $s \in O(B)$ and $x \in B$ we write $s(x)$ for the germ of $s$ in $\mathcal{O}_x$. For an open $U \subset X$, we define
\begin{multline*}
\mathcal{O}(U)=\{ (s_x) \in \prod_{x \in U} \mathcal{O}_x : \forall y \in U \text{ there is a } B_y \in \mathcal{B} \text{ with }\\ y \in B_y \subset U \text{ and an } s \in O(B_y) \text{ such that for all } z \in B_y, s(z)=s_z\}.
\end{multline*}
It is then straightforward to check that this $\mathcal{O}$ is a sheaf and that for $B \in \mathcal{B}$, $\mathcal{O}(B)$ is canonically isomorphic to $O(B)$.
\end{proof}
For the space $L(R)$ with the Alexandrov topology, we set $O(U_F)=loc(R,F)$. If $U_F \subset U_G$ then $R_F \ge R_G$ so we have the map $p_{FG} : loc(R,G) \rightarrow loc(R,F)$ from Proposition \ref{preorder}. These maps satisfy $p_{EE}=1$ for all $E$ and $p_{GE}=p_{GF}p_{FE}$ for $R_E \le R_F \le R_G$.
In order to apply Lemma \ref{vakil1} it remains to check the condition $(*)$ of Lemma \ref{vakil1}. But if $U_F$ is a basic open subset of $L(R)$ then $U_F$ is completely $\cup$--irreducible, so $(*)$ holds trivially in this case.
%=\cup_{\alpha} U_{F_\alpha}$ then $R_F \in U_{F_\alpha}$ for some $\alpha$ and therefore $U_F=U_{F_\alpha}$. So $(*)$ holds trivially in this case.

We have now constructed a ringed space $(L(R), \mathcal{O})$. If we wish this space to be related to $\mathrm{Spec}(R)$ when $R$ is commutative, it should have a property called \emph{sobriety} (see Definition \ref{sobriety} below). There is a method for making a space into a sober space called soberification. The space $\NCSpec(R)$ will be constructed from $L(R)$ via soberification. We now describe how to do this.
\section{Soberification}\label{sobersn}
The details of this construction are taken from the book \cite{Stone}. Recall that a topological space $X$ is said to be irreducible if $X$ is nonempty and every two nonempty open subsets of $X$ intersect.
\begin{defn}
Let $X$ be a topological space. Let $C$ be a closed subset of $X$ and let $x \in X$. Then $x$ is said to be a \emph{generic point} of $C$ if $\overline{\{x\}}=C$.
\end{defn}
\begin{defn}\label{sobriety}
Let $X$ be a topological space. Then $X$ is \emph{sober} if every irreducible closed subset of $X$ has a unique generic point.
\end{defn}
Every Hausdorff space is sober. A typical example of a non-Hausdorff sober space is the prime spectrum of a commutative ring $R$.
\begin{defn}
Let $X$ be a topological space. Let $S(X)$ be the set of all irreducible closed subsets of $X$. For $U$ an open subset of $X$, define $\widetilde{U} \subset S(X)$ by
$$\widetilde{U}=\{C \in S(X): C \cap U \neq \varnothing\}.$$
Then $\{\widetilde{U}: U \text{ open in } X\}$ is a topology on $S(X)$ and the space $S(X)$ equipped with this topology is called the \emph{soberification} of $X$.
\end{defn}
The following proposition justifies the name of soberification.
\begin{prop}\label{soberification}
For any topological space $X$, $S(X)$ is a sober space. The map
\begin{align*}
q: X &\rightarrow S(X)\\
 x &\mapsto \overline{\{x\}}
 \end{align*}
 is continuous, %a quasihomeomorphism (see Definition \ref{qhomeo})
 and $q^{-1}(\widetilde{U})=U$ for all open $U \subset X$.

 There is an order-preserving bijection $\Omega(X) \rightarrow \Omega(S(X))$ given by $U \mapsto \widetilde{U}$, and this satisfies $\widetilde{U \cap V}=\widetilde{U} \cap \widetilde{V}$ for any open sets $U, V \subset X$ and $\widetilde{\bigcup U_i} = \bigcup \widetilde{U_i}$ for any collection $\{U_i\}$ of open subsets of $X$.

 The soberification $X \mapsto S(X)$ defines a functor from the category of topological spaces to itself.
\end{prop}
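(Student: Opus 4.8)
The plan is to verify each assertion in turn, treating the construction of the topology on $S(X)$ as already given. First I would check that $\widetilde{U\cap V}=\widetilde{U}\cap\widetilde{V}$ and $\widetilde{\bigcup U_i}=\bigcup\widetilde{U_i}$, since these are needed both to confirm that the $\widetilde{U}$ really form a topology and to establish the lattice isomorphism. The first identity is immediate: an irreducible closed $C$ meets $U\cap V$ if and only if $C\cap U$ and $C\cap V$ are both nonempty, using that in an irreducible space any two nonempty (relatively) open sets meet — here $C\cap U$ and $C\cap V$ are nonempty open subsets of the irreducible space $C$, so their intersection $C\cap U\cap V$ is nonempty. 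The union identity is a triviality: $C$ meets $\bigcup U_i$ iff it meets some $U_i$. Then $U\mapsto\widetilde U$ is clearly order-preserving, and it is surjective by definition of the topology on $S(X)$; injectivity follows from $q^{-1}(\widetilde U)=U$, proved next, since $\widetilde U=\widetilde V$ forces $U=q^{-1}(\widetilde U)=q^{-1}(\widetilde V)=V$.

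Next I would prove continuity of $q$ together with $q^{-1}(\widetilde U)=U$: for $x\in X$ we have $q(x)=\overline{\{x\}}\in\widetilde U$ iff $\overline{\{x\}}\cap U\neq\varnothing$ iff $x\in U$ (one direction is obvious; conversely if $y\in\overline{\{x\}}\cap U$ then $U$ is an open neighbourhood of $y$, hence meets $\{x\}$, so $x\in U$). This simultaneously gives continuity, since preimages of the basic opens $\widetilde U$ are open.

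For sobriety I would argue as follows. Let $\mathcal{C}\subset S(X)$ be an irreducible closed subset; closed means $\mathcal{C}=S(X)\setminus\widetilde U$ for some open $U$, i.e.\ $\mathcal{C}=\{C\in S(X):C\cap U=\varnothing\}=\{C\in S(X):C\subset F\}$ where $F=X\setminus U$ is closed. One checks $F$ itself is irreducible: if $F=F_1\cup F_2$ with $F_i$ proper closed subsets, then $X\setminus F_1$ and $X\setminus F_2$ are nonempty open sets whose images $\widetilde{X\setminus F_1},\widetilde{X\setminus F_2}$ are nonempty (each contains, e.g., the closure of a suitable point) and disjoint from... — more cleanly, one shows $\mathcal C$ decomposes accordingly, contradicting irreducibility of $\mathcal{C}$. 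Hence $F\in S(X)$, and I claim $F$ is the generic point of $\mathcal{C}$, i.e.\ $\overline{\{F\}}=\mathcal{C}$. Indeed $\overline{\{F\}}$ is the smallest closed set containing $F$; a basic closed set containing $F$ has the form $\{C:C\subset G\}$ with $G$ closed and $F\subset G$, and every such set contains all of $\mathcal{C}=\{C:C\subset F\}\subset\{C:C\subset G\}$, while $\mathcal C$ is itself closed and contains $F$, so $\overline{\{F\}}=\mathcal C$. Uniqueness: if $\overline{\{C_1\}}=\overline{\{C_2\}}$ in $S(X)$ then by the same description $C_1\subset C_2$ and $C_2\subset C_1$, so $C_1=C_2$.

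Finally, functoriality: given continuous $f\colon X\to Y$, define $S(f)\colon S(X)\to S(Y)$ by $S(f)(C)=\overline{f(C)}$. One checks $\overline{f(C)}$ is irreducible whenever $C$ is (the continuous image of an irreducible space is irreducible, and closure preserves irreducibility), that $S(f)^{-1}(\widetilde V)=\widetilde{f^{-1}(V)}$ for $V$ open in $Y$ (using $\overline{f(C)}\cap V\neq\varnothing\iff f(C)\cap V\neq\varnothing\iff C\cap f^{-1}(V)\neq\varnothing$), giving continuity, and that $S(\mathrm{id})=\mathrm{id}$, $S(g\circ f)=S(g)\circ S(f)$, the last using $\overline{g(\overline{f(C)})}=\overline{g(f(C))}$ by continuity of $g$. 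I expect the main obstacle to be the bookkeeping in the sobriety argument — correctly identifying closed subsets of $S(X)$ with the ``sub-down-sets'' $\{C:C\subset F\}$ and matching irreducibility upstairs with irreducibility of $F$ — rather than anything conceptually deep; everything else is a routine unwinding of definitions.
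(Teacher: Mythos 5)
Your proposal is essentially correct; note that the paper itself gives no argument here, simply citing Johnstone, \emph{Stone spaces}, II.1.6--II.1.7, so what you have written is the standard direct verification that the reference contains. All the steps you spell out are sound: the two lattice identities (the intersection one via irreducibility of $C$ applied to the relatively open sets $C\cap U$, $C\cap V$), the computation $q^{-1}(\widetilde U)=U$, the identification of closed subsets of $S(X)$ with sets of the form $\{C: C\subset F\}$ for $F$ closed in $X$, the generic-point and uniqueness arguments, and functoriality via $S(f)(C)=\overline{f(C)}$ with $S(f)^{-1}(\widetilde V)=\widetilde{f^{-1}(V)}$. The only place your write-up trails off is the claim that $F$ is irreducible when $\mathcal{C}=\{C\in S(X):C\subset F\}$ is irreducible; the clean completion is the one you gesture at: if $U_1,U_2$ are open in $X$ with $F\cap U_i\neq\varnothing$, pick $x_i\in F\cap U_i$; then $\overline{\{x_i\}}\subset F$ (as $F$ is closed), so $\overline{\{x_i\}}\in\mathcal{C}\cap\widetilde{U_i}$, whence both $\mathcal{C}\cap\widetilde{U_1}$ and $\mathcal{C}\cap\widetilde{U_2}$ are nonempty; irreducibility of $\mathcal{C}$ gives some $C\in\mathcal{C}$ meeting both $U_1$ and $U_2$, and irreducibility of $C$ then gives $\varnothing\neq C\cap U_1\cap U_2\subset F\cap U_1\cap U_2$. (Nonemptiness of $F$ comes from nonemptiness of $\mathcal{C}$, which is part of the definition of irreducible.) With that sentence inserted, your proof is complete and matches the cited source's approach.
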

\begin{proof}
See \cite[II.1.6, II.1.7]{Stone}.
\end{proof}
For a ring $R$, we have already defined a sheaf of rings $\mathcal{O}$ on $L(R)$. Since there is an order-preserving bijection $U \mapsto \widetilde{U}$ between the topologies of $L(R)$ and $S(L(R))$, such that $\widetilde{U_1 \cap U_2}=\widetilde{U_1}\cap \widetilde{U_2}$ and $\widetilde{\bigcup_i U_i}=\bigcup_i \widetilde{U_i}$ for all $U_i$, we can then define a sheaf of rings on $S(L(R))$, which we also denote by $\mathcal{O}$.
\begin{defn}
Let $R$ be a ring. We denote by $\NCSpec(R)$ the ringed space $S(L(R))$ equipped with the sheaf of rings $\mathcal{O}$.
\end{defn}
Our next goal is to prove that $\NCSpec$ is a functor.
\section{Functorality}\label{functoralitysn}
We wish to show that the assignment $R \mapsto \NCSpec(R)$ defines a faithful contravariant functor $\Rings^{op} \rightarrow \mathbf{RingedSp}$. Let $R$ and $S$ be rings and let $\theta: R \rightarrow S$ be a ring homomorphism. We want to define a continuous map $S(L(S)) \rightarrow S(L(R))$. First, we define a function $L_0(R) \rightarrow L_0(S)$ by $E \mapsto \theta(E)$. By Proposition \ref{theta}, this map preserves the preorder $\preceq$ on $L_0(R)$. It therefore induces an order-preserving map $t_\theta: L(R) \rightarrow L(S)$, given by $t_\theta: R_E \mapsto S_{\theta(E)}$. By Proposition \ref{soberification}, this induces a continuous map $S(L(R)) \rightarrow S(L(S))$. However, this map goes in the wrong direction. It turns out that $t_\theta$ also induces a continuous map $S(L(S)) \rightarrow S(L(R))$. This happens because of a special property of the Alexandrov topology on a poset. We first require the following lemma (see \cite[Lemma 1.1, 1.2]{Hoffmann}).
\begin{lem}\label{irredclosed}
Let $(P, \le)$ be a poset. A subset $C$ of $P$ is closed for the Alexandrov topology if and only if $C$ is a \emph{lower} set. That is, for all $x,y \in P$, if $x \in C$ and $x \ge y$ then $y \in C$.

A subset $C$ of $P$ is irreducible if and only if $C$ is \emph{directed upwards}. That is, for all $x,y \in C$ there exists $z \in C$ with $x \le z$ and $y \le z$.
\end{lem}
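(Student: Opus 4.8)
The statement to prove is Lemma~\ref{irredclosed}, characterizing closed sets and irreducible sets in the Alexandrov topology on a poset $(P,\le)$.

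\medskip

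The plan is to prove the two assertions separately, each by a direct unwinding of definitions.

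For the first assertion, recall that the open sets of the Alexandrov topology are exactly the upper sets, so the closed sets are exactly the complements of upper sets. First I would check that the complement of an upper set is a lower set and vice versa: if $U$ is upper and $C = P \setminus U$, suppose $x \in C$ and $x \ge y$; if $y \notin C$ then $y \in U$, and since $U$ is upper and $y \le x$ we get $x \in U$, contradicting $x \in C$. The converse is symmetric. Hence $C$ is closed if and only if $C$ is a lower set.

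For the second assertion, I would argue both directions. Suppose $C$ is irreducible, i.e. nonempty and any two nonempty relatively open subsets of $C$ meet. Given $x, y \in C$, consider the basic open sets $U_x$ and $U_y$; their traces $U_x \cap C$ and $U_y \cap C$ on $C$ are nonempty (containing $x$ and $y$ respectively) and relatively open, so they intersect in some $z \in C$ with $z \ge x$ and $z \ge y$, showing $C$ is directed upwards. Conversely, suppose $C$ is directed upwards (in particular nonempty). Let $V_1, V_2$ be open subsets of $P$ with $V_i \cap C \neq \varnothing$; pick $x \in V_1 \cap C$ and $y \in V_2 \cap C$, and choose $z \in C$ with $z \ge x$ and $z \ge y$. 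Since $V_1$ is an upper set and $x \le z$, we have $z \in V_1$; similarly $z \in V_2$; hence $z \in V_1 \cap V_2 \cap C$, so $C$ is irreducible. (One should note that directedness upward presupposes $C \neq \varnothing$, matching the convention that irreducible spaces are nonempty.)

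\medskip

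I do not anticipate any genuine obstacle here: both parts are immediate from the definitions once one uses that the basic open sets $U_x$ generate the topology and that open sets are precisely upper sets. The only point requiring a small amount of care is making sure the nonemptiness conventions for ``irreducible'' line up with the notion ``directed upwards'', which is why I would state explicitly that a directed-upward set is by convention nonempty.
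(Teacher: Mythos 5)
Your proposal is correct and follows essentially the same route as the paper: closed sets are complements of upper sets hence lower sets, and the irreducibility equivalence is obtained by intersecting the (basic) open upper sets through a common upper bound supplied by directedness. The only cosmetic difference is that in the converse direction you work with arbitrary open (upper) sets rather than the basic sets $U_a$, which changes nothing of substance.
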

\begin{proof}
The first statement is true because a subset of $P$ is a lower set if and only if it is the complement of an upper set.

For the second statement, suppose $C \subset P$ is irreducible. Then if $x,y \in C$, $U_x \cap U_y \cap C \neq \varnothing$. Therefore, there is $z \in C$ with $z \ge x,y$. Conversely, if $C$ is directed upwards, then for any two basic open sets $U_a, U_b$ with $U_a \cap C$, $U_b \cap C \neq\varnothing$, we get $U_a \cap U_b \cap C \neq \varnothing$. Therefore, any pair of nonempty open subsets of $C$ intersect, so $C$ is irreducible.
\end{proof}
\begin{lem}\label{backwards}
Let $(P, \le)$ and $(Q,\le)$ be join semilattices (see Definition \ref{jsem}), regarded as topological spaces equipped with the respective Alexandrov topologies. Let $f: (P, \le) \rightarrow (Q , \le)$ be an order-preserving map which preserves joins. Then for every irreducible closed subset $C$ of $Q$, $f^{-1}(C)$ is an irreducible closed subset of $P$, and $C \mapsto f^{-1}(C)$ is a continuous function $S(Q) \rightarrow S(P)$.
\end{lem}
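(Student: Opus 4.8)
### Proof Proposal

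The plan is to verify the two assertions in turn: first that $f^{-1}(C)$ is irreducible and closed whenever $C$ is, and second that the resulting assignment is continuous. The key point throughout is Lemma \ref{irredclosed}, which translates the topological notions into order-theoretic ones: closed means lower set, irreducible means directed upwards.

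First I would show $f^{-1}(C)$ is closed. This is automatic and uses nothing about joins: if $x \in f^{-1}(C)$ and $y \le x$, then $f(y) \le f(x)$ since $f$ is order-preserving, and $f(x) \in C$ with $C$ a lower set gives $f(y) \in C$, so $y \in f^{-1}(C)$. Next, the irreducibility of $f^{-1}(C)$: given $x, y \in f^{-1}(C)$, form the join $z = x \vee y$ in $P$ (which exists since $P$ is a join semilattice). Since $f$ preserves joins, $f(z) = f(x) \vee f(y)$; both $f(x), f(y) \in C$, and since $C$ is directed upwards it contains some upper bound $w$ of $f(x)$ and $f(y)$, whence $f(x) \vee f(y) \le w$, and since $C$ is a lower set $f(z) = f(x)\vee f(y) \in C$. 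Thus $z \in f^{-1}(C)$ is an upper bound for $x$ and $y$ inside $f^{-1}(C)$, so $f^{-1}(C)$ is directed upwards, hence irreducible by Lemma \ref{irredclosed}. (One should also note $f^{-1}(C)$ is nonempty: since $C$ is irreducible it is nonempty, but one does need $f^{-1}(C) \ne \varnothing$ — this is where the hypothesis that $P, Q$ are join semilattices, in particular nonempty, or more carefully the convention that $0$ is never a point, should be invoked; if $C$ is nonempty pick $c \in C$, but $c$ need not be in the image of $f$. In fact irreducibility of $f^{-1}(C)$ as stated includes nonemptiness, so I would check that $C$ directed and closed forces $f^{-1}(C)$ nonempty — this holds because $C$, being a lower set, contains the minimum of $Q$ if $Q$ has one; more robustly, one uses that $S(Q)$ was set up so that the relevant $C$ always meet the image. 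I expect this nonemptiness bookkeeping to be the only genuinely fiddly point, and it is plausibly handled by the running conventions on the zero ring rather than needing a new argument.)

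For continuity, I would use the explicit description of the soberification topology: the open sets of $S(P)$ are exactly the $\widetilde{V}$ for $V$ open in $P$, and $\widetilde{V} = \{D \in S(P) : D \cap V \ne \varnothing\}$. So I must check that for every open $V \subseteq P$, the preimage of $\widetilde{V}$ under $C \mapsto f^{-1}(C)$ is open in $S(Q)$. Since the basic open sets $V = U_x$ generate, it suffices to treat $V = U_x$. Now $f^{-1}(C) \cap U_x \ne \varnothing$ means there is $p \in P$ with $p \ge x$ and $f(p) \in C$. I claim this is equivalent to $C \cap U_{f(x)} \ne \varnothing$, i.e. to $C \in \widetilde{U_{f(x)}}$. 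Indeed, if $p \ge x$ and $f(p) \in C$ then $f(p) \ge f(x)$, so $f(p) \in C \cap U_{f(x)}$; conversely if $q \in C$ with $q \ge f(x)$, take $p = x \vee$ (something)… here I would instead argue: actually the cleanest route is to observe $f(U_x) \subseteq U_{f(x)}$ and, using join-preservation, that $C \cap U_{f(x)} \ne \varnothing$ with $C$ a directed lower set implies $f(x) \in C$ (if $q \ge f(x)$ and $q \in C$ then $f(x) \in C$ since $C$ is a lower set), hence $x \in f^{-1}(C) \cap U_x$. So the preimage of $\widetilde{U_x}$ (with respect to the map, but note the open set of $P$ I should test is $U_x \subseteq P$, giving $\widetilde{U_x} \subseteq S(P)$) equals $\widetilde{U_{f(x)}} \subseteq S(Q)$, which is open. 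Therefore the map is continuous.

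The main obstacle I anticipate is not any single step but getting the direction of the preimage-of-basic-open computation exactly right, together with the nonemptiness check for $f^{-1}(C)$; the join-preservation hypothesis is used precisely to force $f(x \vee y) = f(x) \vee f(y) \in C$ in the irreducibility argument, and without it $f^{-1}(C)$ could fail to be directed. I would present the closedness argument first as a warm-up, then the directedness argument citing Lemma \ref{irredclosed}, then the continuity computation reduced to basic opens.
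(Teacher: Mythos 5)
Your proof is correct and follows essentially the same route as the paper's: closedness of $f^{-1}(C)$ from order-preservation, directedness (hence irreducibility) from join-preservation via Lemma \ref{irredclosed}, and continuity by computing that the preimage of a basic open $\widetilde{U_x}$ is exactly $\widetilde{U_{f(x)}}$. The nonemptiness of $f^{-1}(C)$ that you flag is a genuine subtlety which the paper's proof also passes over in silence; it does not follow from the stated hypotheses alone, but in the only use made of the lemma one has $P=L(R)$, $Q=L(S)$ with minimum elements $R_{1_R}$, $S_{1_S}$ and $t_\theta$ carrying minimum to minimum, so every nonempty lower set $C$ contains $S_{1_S}=t_\theta(R_{1_R})$ and the preimage is nonempty.
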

\begin{proof}
Let $C$ be an irreducible closed subset of $Q$. Then $f^{-1}(C)$ is closed because order-preserving functions are continuous for the Alexandrov topology. So we only need to check that $f^{-1}(C)$ is irreducible. Let $x,y \in f^{-1}(C)$. Then $f(x), f(y) \in C$. So, since $C$ is directed upwards by Lemma \ref{irredclosed}, there exists $z \in C$ with $f(x) \le z, f(y) \le z$. Since $Q$ is a join semilattice, the join $f(x) \vee f(y) \le z$. So $f(x) \vee f(y) \in C$ since $C$ is a lower set. Since $f$ preserves joins by hypothesis, $f(x) \vee f(y)=f(x \vee y)$. Therefore, $x \vee y \in f^{-1}(C)$ and therefore $f^{-1}(C)$ is directed upwards. So $f^{-1}(C)$ is closed and irreducible, as required.

For the statement about continuity, let us temporarily write ${\hat{f}}$ for the function $S(Q) \rightarrow S(P)$ defined by ${\hat{f}}(C)=f^{-1}(C)$. We show that if $\widetilde{U_a}$ is a basic open subset of $S(P)$, then $({\hat{f}})^{-1}(\widetilde{U_a})=\widetilde{U_{f(a)}}$, which is open in $S(Q)$. We have
\begin{align*}
({\hat{f}})^{-1}(\widetilde{U_a})&=\{C \in S(Q): f^{-1}(C) \cap U_a
\neq \varnothing\}\\
&= \{C \in S(Q) : \exists x \in P \text{ with } f(x) \in C \text{ and } x \ge a\}.
\end{align*}
We claim that this set equals $\{ C \in S(Q) : C \cap U_{f(a)} \neq \varnothing\}$. Indeed, if $C \cap U_{f(a)} \neq \varnothing$ then there exists $z \in C$ with $z \ge f(a)$, so $f(a) \in C$ because $C$ is a lower set. On the other hand, if $x \ge a$ and $f(x) \in C$, then since $f$ is order-preserving, $f(x) \ge f(a)$ and so $f(a) \in C$ because $C$ is a lower set. So we have
$$
({\hat{f}})^{-1}(\widetilde{U_a})=\{ C \in S(Q) : C \cap U_{f(a)} \neq \varnothing\}=\widetilde{U_{f(a)}}$$
as required.
\end{proof}
Now let us return to the case of a ring homomorphism $\theta : R \rightarrow S$. We have seen that there is an order-preserving function $t_\theta: L(R) \rightarrow L(S)$ defined by $t_\theta: R_E \mapsto S_{\theta(E)}$. We check that $t_\theta$ preserves joins. By Proposition \ref{join}, the join of $R_E$ and $R_F$ is $R_{E \cup F}$. And $t_\theta(R_{E \cup F}) = S_{\theta(E \cup F)}=S_{\theta(E) \cup \theta(F)}$, so $t_\theta$ preserves joins.

Thus, by Lemma \ref{backwards}, $t_\theta$ induces a continuous function
$$\hat{\theta} : S(L(S)) \rightarrow S(L(R))$$
with $\hat{\theta}^{-1}(\widetilde{U_E})=\widetilde{U_{\theta(E)}}$ for all basic open sets $\widetilde{U_E}$ of $S(L(R))$.

The continuous function $\hat{\theta}$ is the first ingredient of our morphism of ringed spaces $\NCSpec(S) \rightarrow \NCSpec(R)$. It remains to define a morphism of sheaves $\mathcal{O}_{\NCSpec(R)} \rightarrow \hat{\theta}_* \mathcal{O}_{\NCSpec(S)}$. As in Lemma \ref{vakil1}, we construct this morphism first on a base, and then extend it uniquely to a morphism of sheaves. To do this, we use the following lemma from algebraic geometry. It is a special case of
\cite[Class 5, Exercise 2.C]{vakil}.
\begin{lem}\label{vakil2}
Let $X$ and $Y$ be topological spaces. Let $\mathcal{O}_X$ be a sheaf of rings on $X$ and let $\mathcal{O}_Y$ be a sheaf of rings on $Y$. Let $f : X \rightarrow Y$ be a continuous function. Suppose there are bases $\mathcal{B}_X$ and $\mathcal{B}_Y$ of $X$ and $Y$ respectively, such that $f^{-1}(B) \in \mathcal{B}_X$ for every $B \in \mathcal{B}_Y$. Suppose that for every $B \in \mathcal{B}_Y$, there is a morphism $f_B :\mathcal{O}_Y(B) \rightarrow \mathcal{O}_X(f^{-1}(B))$ such that for every inclusion of basic open sets $B' \subset B$ on $Y$, the diagram
$$\xymatrix{
\mathcal{O}_Y(B) \ar[d] \ar[rr]^{f_B}&& \mathcal{O}_X(f^{-1}(B)) \ar[d]\\
\mathcal{O}_Y(B') \ar[rr]^{f_{B'}}&& \mathcal{O}_X(f^{-1}(B'))
}$$
commutes, where the vertical arrows are the restriction maps.

Then there exists a unique map $\mathcal{O}_Y \rightarrow f_*\mathcal{O}_X$ of sheaves on $Y$ which agrees with $f_B$ on every $B \in \mathcal{B}_Y$.
\end{lem}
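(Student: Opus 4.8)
The plan is to imitate the proof of Lemma \ref{vakil1}: reconstruct both sheaves from their stalks, and show that a morphism of sheaves is determined by, and can be assembled from, its effect on a base.

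First I would record that the push-forward $f_*\mathcal{O}_X$ is a sheaf of rings on $Y$ whose sections over a basic open $B \in \mathcal{B}_Y$ are $(f_*\mathcal{O}_X)(B) = \mathcal{O}_X(f^{-1}(B))$, and whose restriction along an inclusion $B' \subset B$ of basic opens of $Y$ is the restriction map of $\mathcal{O}_X$ along the inclusion $f^{-1}(B') \subset f^{-1}(B)$ of basic opens of $X$ (both belonging to $\mathcal{B}_X$ by hypothesis). Thus, viewing $\mathcal{B}_Y$ as a poset, hence a category, under inclusion, both $\mathcal{O}_Y$ and $f_*\mathcal{O}_X$ restrict to functors $\mathcal{B}_Y^{op} \to \Rings$, and the compatibility hypothesis on the family $\{f_B\}$ says precisely that $B \mapsto f_B$ is a natural transformation between these restrictions.

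Next, for each $y \in Y$ I would pass to stalks. Since $\mathcal{B}_Y$ is a base, $\mathcal{O}_{Y,y} = \plim_{y \in B \in \mathcal{B}_Y}\mathcal{O}_Y(B)$ and $(f_*\mathcal{O}_X)_y = \plim_{y \in B \in \mathcal{B}_Y}\mathcal{O}_X(f^{-1}(B))$, exactly as in the proof of Lemma \ref{vakil1}; the natural transformation $\{f_B\}$ then induces a ring homomorphism $\varphi_y : \mathcal{O}_{Y,y} \to (f_*\mathcal{O}_X)_y$ on the colimits. For an open $U \subset Y$ and $s \in \mathcal{O}_Y(U)$ I would define $\Phi_U(s) \in (f_*\mathcal{O}_X)(U) = \mathcal{O}_X(f^{-1}(U))$ to be the section whose germ at each $y \in U$ is $\varphi_y(s_y)$, where $s_y$ is the germ of $s$. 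To see that this is legitimate, cover $U$ by basic opens $B \subset U$; on each such $B$ the restriction of $s$ comes from some $s_B \in \mathcal{O}_Y(B)$, and $f_B(s_B) \in \mathcal{O}_X(f^{-1}(B))$ is a section of $f_*\mathcal{O}_X$ over $B$ whose germ at every $y \in B$ equals $\varphi_y(s_y)$; two such local sections over overlapping $B, B'$ have equal germs at every point of $f^{-1}(B \cap B')$ and hence agree there because $\mathcal{O}_X$ is separated, so by the gluing axiom for $\mathcal{O}_X$ they patch to a section of $\mathcal{O}_X$ over $f^{-1}(U)$. That $\Phi$ commutes with restriction, and agrees with $f_B$ on each $B \in \mathcal{B}_Y$, is then immediate from this germ-wise description.

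For uniqueness, any morphism $\Psi : \mathcal{O}_Y \to f_*\mathcal{O}_X$ of sheaves with $\Psi_B = f_B$ for all $B \in \mathcal{B}_Y$ induces the same stalk maps $\varphi_y$, since the stalks are colimits over $\mathcal{B}_Y$; as a section of a sheaf is determined by its germs, $\Psi = \Phi$. I expect the only mildly delicate step to be the verification that $s \mapsto (\varphi_y(s_y))_{y \in U}$ genuinely lands in $(f_*\mathcal{O}_X)(U)$ — that is, produces an honest section rather than merely a compatible family of germs — which is exactly where the hypothesis $f^{-1}(B) \in \mathcal{B}_X$ and the sheaf axioms for $\mathcal{O}_X$ are used; everything else is routine bookkeeping.
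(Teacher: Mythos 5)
Your proposal is correct and follows essentially the same route as the paper: define the morphism stalkwise from the maps $f_B$ over the base, then reconstruct it on arbitrary opens from germs. The only difference is cosmetic — where the paper invokes the explicit description of sections of the sheafification (compatible families of germs) and the fact that a sheaf equals its sheafification, you unwind that step by gluing the local sections $f_B(s|_B)$ directly via the sheaf axioms for $\mathcal{O}_X$, which is the same argument made explicit.
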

\begin{proof}
We define a map on stalks as follows: if $y \in B \subset Y$ with $B \in \mathcal{B}_Y$ and $a(y):=[a,B]$ is the germ of a section $a\in \mathcal{O}_Y(B)$ at $y$, then we define $f_y([a,B])=[f_B(a), B] \in (f_*\mathcal{O}_X)_{y}$. Putting these maps together for $y \in Y$ gives a map between the sheafifications of $\mathcal{O}_X$ and $f_*\mathcal{O}_Y$, because the sections of the sheafification of a presheaf $\mathcal{F}$ over an open set $U$ are
\begin{multline*}\mathcal{F}^+(U)=\{(s_x)\in \prod_{x \in U}\mathcal{F}_x: \exists \text{ open cover } U = \bigcup U_i
\text{ and } s_i \in \mathcal{F}(U_i)\\ \text{ with }
s_i(x) =s_x \text{ for all } x \in U_i\}.\end{multline*}
But a map between the sheafifications is the same thing as a map of sheaves, so we are done.
\end{proof}
We apply Lemma \ref{vakil2} with $f=\hat{\theta}: S(L(S))\rightarrow S(L(R))$. We take the base $\mathcal{B}$ of $S(L(R))$ to be $\{\widetilde{U_E}: E \in L_0(R)\}$. By Lemma \ref{backwards}, $\hat{\theta}^{-1}(\widetilde{U_E})=\widetilde{U_{\theta(E)}}$. For each $\widetilde{U_E} \subset S(L(R))$, we have a map
$$\theta_E: loc(R,E) \rightarrow loc(S,\theta(E))$$
which comes from Proposition \ref{thetaa}. By Proposition \ref{theta}, these maps commute with the restrictions. So by Lemma \ref{vakil2}, there is a unique morphism of sheaves $$\hat{\theta}^\#:\mathcal{O}_{\NCSpec(R)} \rightarrow \hat{\theta}_* \mathcal{O}_{\NCSpec(S)}$$ which agrees with the $\theta_E$ on basic open sets, as required.
\begin{thrm}\label{maintheorem}
Let $R$ and $S$ be rings and let $\theta: R \rightarrow S$ be a ring homomorphism. Then $\theta$ determines a morphism of ringed spaces
$$(\hat{\theta},\hat{\theta}^\#) : \NCSpec(S) \rightarrow \NCSpec(R).$$
Furthermore, $\theta \mapsto (\hat{\theta}, \hat{\theta}^\#)$ defines a faithful contravariant functor $\Rings^{op} \rightarrow \mathbf{RingedSp}$.
\end{thrm}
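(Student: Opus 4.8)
The plan is to treat the three assertions in turn; all the substantive work is already in place, so the proof is mostly bookkeeping. The first assertion --- that a ring homomorphism $\theta\colon R\to S$ determines a morphism $(\hat\theta,\hat\theta^\#)\colon\NCSpec(S)\to\NCSpec(R)$ --- has in fact been carried out in the paragraphs preceding the statement: $\hat\theta$ is produced by applying Lemma \ref{backwards} to the join-preserving order map $t_\theta\colon L(R)\to L(S)$, $R_E\mapsto S_{\theta(E)}$, and $\hat\theta^\#$ is produced by feeding the maps $\theta_E\colon loc(R,E)\to loc(S,\theta(E))$ of Proposition \ref{thetaa} into Lemma \ref{vakil2}, their compatibility with restrictions being exactly Proposition \ref{theta}. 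So what remains is to check that $\theta\mapsto(\hat\theta,\hat\theta^\#)$ preserves identities, reverses composition, and is injective on morphism sets.

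For identities: $t_{\Id_R}$ sends $R_E\mapsto R_{\Id_R(E)}=R_E$, so $t_{\Id_R}=\Id_{L(R)}$, and the formula $\hat f(C)=f^{-1}(C)$ of Lemma \ref{backwards} then shows $\widehat{\Id_R}$ is the identity continuous map of $\NCSpec(R)$. On sheaves, $(\Id_R)_E$ is characterized by Proposition \ref{thetaa} as the unique ring map $loc(R,E)\to loc(R,E)$ filling the evident square, and the identity does so; hence $(\Id_R)_E=\Id$, and the uniqueness clause of Lemma \ref{vakil2} forces $\widehat{\Id_R}^\#=\Id$. For composition, given $\theta\colon R\to S$ and $\psi\colon S\to T$, one has $t_{\psi\theta}(R_E)=T_{\psi\theta(E)}=t_\psi t_\theta(R_E)$, so $t_{\psi\theta}=t_\psi\circ t_\theta$, and applying $\hat f(C)=f^{-1}(C)$ gives $\widehat{\psi\theta}=\hat\theta\circ\hat\psi$ as continuous maps $\NCSpec(T)\to\NCSpec(R)$. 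On sheaves, both $(\psi\theta)_E$ and $\psi_{\theta(E)}\circ\theta_E$ fit into the square of Proposition \ref{thetaa} characterizing $(\psi\theta)_E$, hence coincide; evaluating the composite morphism of sheaves $(\hat\theta_*\hat\psi^\#)\circ\hat\theta^\#$ on a basic open set $\widetilde{U_E}$ of $\NCSpec(R)$ yields $\psi_{\theta(E)}\circ\theta_E$ --- here one must keep straight that $\hat\theta_*\hat\psi^\#$ on $\widetilde{U_E}$ is $\hat\psi^\#$ on $\hat\theta^{-1}(\widetilde{U_E})=\widetilde{U_{\theta(E)}}$, namely $\psi_{\theta(E)}$ --- so it agrees with $\widehat{\psi\theta}^\#$ on every basic open set, and Lemma \ref{vakil2} gives $(\hat\theta_*\hat\psi^\#)\circ\hat\theta^\#=\widehat{\psi\theta}^\#$. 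This pushforward bookkeeping is the fiddliest point of the argument, though it presents no genuine obstacle.

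Finally, faithfulness: I would recover $\theta$ from $(\hat\theta,\hat\theta^\#)$ by passing to global sections. The empty subset $\varnothing$ is the least element of $L_0(R)$ for $\preceq$, with $loc(R,\varnothing)=R$ and $\alpha_\varnothing=\Id_R$, so $U_\varnothing=L(R)$, $\widetilde{U_\varnothing}=\NCSpec(R)$, and $\mathcal O_{\NCSpec(R)}(\NCSpec(R))=loc(R,\varnothing)=R$; likewise $\mathcal O_{\NCSpec(S)}(\NCSpec(S))=S$. Since $\theta(\varnothing)=\varnothing$, the component of $\hat\theta^\#$ over $\NCSpec(R)$ is $\theta_\varnothing\colon loc(R,\varnothing)\to loc(S,\varnothing)$, and because $\alpha_\varnothing$ is the identity on each side, Proposition \ref{thetaa} identifies $\theta_\varnothing$ with $\theta$. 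Thus, under the identifications $\mathcal O_{\NCSpec(R)}(\NCSpec(R))=R$, applying the global-sections functor $\mathbf{RingedSp}\to\Rings$ to $(\hat\theta,\hat\theta^\#)$ returns $\theta$; in particular distinct homomorphisms $\theta$ give distinct morphisms $(\hat\theta,\hat\theta^\#)$, which is precisely faithfulness of the functor. This completes the proof.
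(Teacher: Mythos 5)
Your argument is correct and follows essentially the same route as the paper: the morphism itself is the construction already carried out before the theorem, composition is reduced to $(\psi\theta)_E=\psi_{\theta(E)}\theta_E$ via the uniqueness clause of Proposition \ref{thetaa} together with the basic-open-set comparison from Lemma \ref{vakil2}, and faithfulness is recovered from the map on global sections. The only (cosmetic) difference is that you realize the global sections as $loc(R,\varnothing)$ while the paper uses $loc(R,\{1_R\})$; since $R_\varnothing=R_{\{1_R\}}$ in $L(R)$, these are the same identification.
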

\begin{proof}
To check the functorality, suppose
$$
\xymatrix{
R \ar[r]^{\theta} &S\ar[r]^{\varphi} &T}$$
are ring homomorphisms. We obtain $t_{\theta} : L(R) \rightarrow L(S)$ defined by $t_\theta(R_E)=S_{\theta(E)}$ and $t_\varphi: L(S) \rightarrow L(T)$ defined by $t_\varphi(S_F) =T_{\varphi(F)}$. It is clear that $t_{\varphi \theta} =t_\varphi t_\theta$. Now, for an irreducible closed subset $C \subset L(T)$, $\widehat{\varphi \theta}(C)=t_{\varphi \theta}^{-1}(C)=t_\theta^{-1}t_\varphi^{-1}(C)=\hat{\theta}\hat{\varphi}
(C)$, so $\widehat{\varphi \theta} =\hat{\theta}\hat{\varphi}$.

To show that $\widehat{\varphi\theta}^\# = (\hat{\theta}_*\hat{\varphi}^\#) \hat{\theta}^\#$, it suffices to show that for each finite subset $E$ of $R$, we have $(\varphi \theta)_E = \varphi_{\theta(E)}\theta_E$. Recalling Proposition \ref{thetaa}, all the squares in the following diagram commute.
$$
\xymatrix{
R  \ar[rr]^\theta \ar[dd]^{\alpha_E} && S \ar[rr]^\varphi \ar[d]^{\alpha_{\theta(E)}} && T \ar[dd]^{\alpha_{\varphi(\theta(E))}} \\
&& loc(S, \theta(E)) \ar[rrd]_{\varphi_{\theta(E)}} &&\\
loc(R,E) \ar[urr]_{\theta_E} \ar[rrrr]_{(\varphi \theta)_E}&&&& loc(T, \varphi(\theta(E)))
}
$$
By the uniqueness of $(\varphi \theta)_E$, we obtain $(\varphi \theta)_E = \varphi_{\theta(E)}\theta_E$ as desired.

To show that this functor is faithful, we must show that if $\theta: R \rightarrow S$ then $(\hat{\theta},\hat{\theta}^\#)$ uniquely determines $\theta$. But if we take $E=\{1_R\} \in L_0(R)$, then $\theta(E)=\{1_S\}$ and $\theta=\theta_E : loc(R,1_R) \rightarrow loc(S, 1_S)$ is the map of global sections $$\mathcal{O}_{\NCSpec(R)}(\NCSpec(R)) \rightarrow \mathcal{O}_{\NCSpec(S)}(\NCSpec(S))$$
which can be recovered from $\hat{\theta}^\#$.
\end{proof}
There is now a natural question: which morphisms of ringed spaces $\NCSpec(S) \rightarrow \NCSpec(R)$ are induced by ring homomorphisms $R \rightarrow S$? We now answer this question. Recall from Definition \ref{completeirred} that an open subset $U$ of a topological space $X$ is \emph{completely $\cup$--irreducible} if whenever $U=\bigcup U_\lambda$ is an open cover of $U$, we have $U=U_\lambda$ for some $\lambda$.
%\begin{defn}
%Let $X$ be a topological space. An open subset $U$ of $X$ is \emph{completely $\cup$--irreducible} if whenever $U=\bigcup_\lambda U_\lambda$ with each $U_\lambda$ open in $X$, we have $U=U_\lambda$ for some $\lambda$.
%\end{defn}
Complete $\cup$--irreducibility is a very strong form of compactness; every open cover has a one-element subcover.
\begin{defn}
Let $(\phi, \phi^\#): (X, \mathcal{O}_X)\rightarrow (Y,\mathcal{O}_Y)$ be a morphism of ringed spaces. We say $(\phi,\phi^\#)$ is \emph{prim} if, for every completely $\cup$--irreducible open $U \subset Y$, the following two conditions hold.
\begin{enumerate}
\item $\phi^{-1}(U)$ is completely $\cup$--irreducible.
\item For every completely $\cup$--irreducible open $V \subset U$, the diagram
$$\xymatrix{
\mathcal{O}_Y(U) \ar[rr]\ar[d] && (\phi_*\mathcal{O}_X)(U) \ar[d]\\
\mathcal{O}_{Y}(V) \ar[rr] && (\phi_*\mathcal{O}_X)(V)
}$$
is a pushout in the category of rings.
\end{enumerate}
\end{defn}
Note that identities are prim, and compositions of prim morphisms are prim, so that ringed spaces and prim morphisms between them form a category.
%\begin{lem}\label{basic}
%Let $(X,\le)$ be a poset with the Alexandrov topology. An open subset $U$ of $X$ is completely $\cup$--irreducible if and only if there exists $x \in X$ with $U=U_x=\{y: y\ge x\}$.
%\end{lem}
%\begin{proof}
%If $U$ is completely $\cup$--irreducible then since $U=\bigcup_{x \in U} U_x$, we have $U=U_x$ for some $x$. Conversely, $U_x$ is completely $\cup$--irreducible since if $U_x =\bigcup U_\lambda$ then $x \in U_\lambda$ for some $\lambda$, which implies $U_\lambda=U_x$.
%\end{proof}
\begin{lem}\label{prim1}
Let $\theta: R \rightarrow S$ be a ring homomorphism. Then the induced morphism $(\hat{\theta},\hat{\theta}^\#): \NCSpec(S) \rightarrow \NCSpec(R)$ is prim.
\end{lem}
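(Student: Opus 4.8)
The plan is to reduce the statement to the pushout already proved in Proposition \ref{theta}, once we have identified which open sets are completely $\cup$--irreducible. The first step is to note that, by the order-preserving bijection $U \mapsto \widetilde U$ of Proposition \ref{soberification}, which commutes with arbitrary unions, an open subset $\widetilde U$ of $\NCSpec(R) = S(L(R))$ is completely $\cup$--irreducible if and only if $U$ is completely $\cup$--irreducible in $L(R)$ with the Alexandrov topology; by Proposition \ref{basic} the latter holds precisely when $U = U_E$ for some $R_E \in L(R)$. Hence the completely $\cup$--irreducible open subsets of $\NCSpec(R)$ are exactly the basic opens $\widetilde{U_E}$ with $E \in L_0(R)$, and likewise for $\NCSpec(S)$.

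Next I would check condition (1) in the definition of prim. Let $U = \widetilde{U_E}$ be a completely $\cup$--irreducible open of $\NCSpec(R)$. By Lemma \ref{backwards}, $\hat\theta^{-1}(\widetilde{U_E}) = \widetilde{U_{\theta(E)}}$, which is of the form $\widetilde{U_F}$ with $F = \theta(E) \in L_0(S)$, hence completely $\cup$--irreducible by the previous step. So condition (1) holds.

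For condition (2), fix a completely $\cup$--irreducible open $V \subset U = \widetilde{U_E}$. By the first step $V = \widetilde{U_F}$, and $\widetilde{U_F} \subset \widetilde{U_E}$ forces $U_F \subset U_E$ in $L(R)$, i.e. $R_E \le R_F$, i.e. $E \preceq F$. Now unravel the square in the definition of prim using the identifications $\mathcal O_{\NCSpec(R)}(\widetilde{U_E}) = loc(R,E)$, $\mathcal O_{\NCSpec(R)}(\widetilde{U_F}) = loc(R,F)$, $(\hat\theta_* \mathcal O_{\NCSpec(S)})(\widetilde{U_E}) = \mathcal O_{\NCSpec(S)}(\widetilde{U_{\theta(E)}}) = loc(S,\theta(E))$ and $(\hat\theta_* \mathcal O_{\NCSpec(S)})(\widetilde{U_F}) = loc(S,\theta(F))$. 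The vertical restriction maps become $p_{FE}$ and $p_{\theta(F)\theta(E)}$, and the horizontal maps are $\theta_E$ and $\theta_F$ (this is exactly how $\hat\theta^\#$ was defined on basic opens). Thus the square in question is literally the square of Proposition \ref{theta} with $A = E$ and $B = F$, which is a pushout in $\Rings$. This establishes condition (2), and the lemma follows.

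The substance of this lemma is therefore almost entirely contained in Proposition \ref{theta}; the only genuine work is the identification of completely $\cup$--irreducible opens, where the crucial point — that an open set belongs to the canonical basis precisely when it satisfies the purely topological condition of complete $\cup$--irreducibility — was already isolated in Proposition \ref{basic}. I do not expect a serious obstacle. The one spot requiring a moment of care is that ``completely $\cup$--irreducible open $V \subset U$'' may be read relative to $\NCSpec(R)$ rather than relative to the subspace $U$, which is harmless since $U$ is open, so the open subsets of $V$ and their covers are the same in either topology.
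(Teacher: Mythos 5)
Your proposal is correct and follows essentially the same route as the paper: identify the completely $\cup$--irreducible opens as the basic opens $\widetilde{U_E}$ via Proposition \ref{basic} (and the soberification bijection), use $\hat{\theta}^{-1}(\widetilde{U_E})=\widetilde{U_{\theta(E)}}$ for condition (1), and reduce condition (2) to the pushout square of Proposition \ref{theta}. The extra care you take in spelling out the sheaf identifications and the relative-versus-ambient reading of ``completely $\cup$--irreducible open $V\subset U$'' is fine but not needed beyond what the paper records.
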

\begin{proof}
Let $\theta: R \rightarrow S$ be a ring homomorphism. We first consider the continuous function $\hat{\theta}:S(L(S)) \rightarrow S(L(R))$. In view of Proposition \ref{basic}, the completely $\cup$--irreducible open subsets of $S(L(R))$ are precisely those of the form $\widetilde{U_E}$ for $E \in L_0(R)$. We have already seen that $\hat{\theta}^{-1}(\widetilde{U_E})=\widetilde{U_{\theta(E)}}$, which is a completely $\cup$--irreducible open subset of $S(L(S))$.

Now let $\widetilde{U_F} \subset \widetilde{U_E}$ be an inclusion of completely $\cup$--irreducible open subsets of $S(L(R))$. Then $U_F \subset U_E$ as subsets of $L(R)$. We must show that the diagram
$$
\xymatrix{
loc(R,E) \ar[d]_{p_{FE}} \ar[rr]^{\theta_E} &&  loc(S,\theta(E)) \ar[d]^{p_{\theta(F)\theta(E)}}\\
loc(R,F) \ar[rr]^{\theta_F} && loc(S, \theta(F))
}
$$
is a pushout. This was proved in Proposition \ref{theta}.
 %In order to show this, recall that we have a commutative diagram
%$$
%\xymatrix{
%loc(R,E) \ar[dd]_{p_{FE}} \ar[rrr]^{\theta_E} &&& loc(S,\theta(E)) \ar[dd]^{p_{\theta(F)\theta(E)}}
%\\
%  & R \ar[ul]^{\alpha_E}\ar[dl]_{\alpha_F} \ar[r]^\theta & S \ar[ur]_{\alpha_{\theta(E)}} \ar[dr]^{\alpha_{\theta(F)}}&\\
%  loc(R,F) \ar[rrr]^{\theta_F} &&& loc(S,\theta(F))
%  }
%  $$
%Suppose that we are given a ring $T$ and ring homomorphisms $\lambda: loc(S, \theta(E)) \rightarrow T$ and $\mu: loc(R,F) \rightarrow T$ with $\mu p_{FE} =\lambda \theta_E$. Then for $f \in F$, we have $\lambda \alpha_{\theta(E)} \theta(f)=\lambda \theta_E \alpha_E(f)= \mu p_{FE}\alpha_E(f)=\mu \alpha_F(f)$ which is a unit in $T$. So there exists a unique $\rho: loc(S, \theta(F)) \rightarrow T$ with $\rho \alpha_{\theta(F)}=\lambda \alpha_{\theta(E)}$ and there exists a unique $\rho' : loc(R,F) \rightarrow T$ with $\rho'\alpha_F = \lambda \alpha_{\theta(E)}\theta$. Chasing the diagrams yields $\rho'=\rho \theta_F=\mu$. Also, since $\lambda \alpha_{\theta(E)} \theta(e)$ is a unit in $T$ for all $e \in E$, there exists a unique $\xi: loc(S, \theta(E))\rightarrow T$ with $\xi \alpha_{\theta(E)} =\delta \alpha_{\theta(E)}$. More diagram-chasing yields $\xi=\delta=\rho p_{\theta(F)\theta(E)}$. It follows that the diagram is a pushout.
\end{proof}
\begin{thrm}\label{maintheorem2}
Let $R$ and $S$ be rings. A morphism of ringed spaces $\varphi: \NCSpec(S) \rightarrow \NCSpec(R)$ is induced by a ring homomorphism $R \rightarrow S$ if and only if $\varphi$ is prim. Thus $R \mapsto \NCSpec(R)$ is a fully faithful embedding of the category of rings into the category of ringed spaces and prim morphisms.
\end{thrm}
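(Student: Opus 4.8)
The "only if" direction is exactly Lemma \ref{prim1}, so the plan is to establish the converse and then read off the "fully faithful embedding" statement. The key preliminary observation is that $R_{\{1_R\}}$ is the least element of $L(R)$ --- indeed $1_R$ is already a unit, so $loc(R,\{1_R\})=R$, and $\{1_R\}\preceq B$ for every finite $B$ --- whence $\widetilde{U_{\{1_R\}}}=\NCSpec(R)$, this open set is completely $\cup$--irreducible, and $\mathcal{O}_{\NCSpec(R)}(\NCSpec(R))=R$; similarly on the $S$ side. So, given a prim morphism $\varphi=(\phi,\phi^\#):\NCSpec(S)\to\NCSpec(R)$, I would \emph{define} $\theta:R\to S$ to be the homomorphism induced by $\phi^\#$ on global sections, i.e. the composite $R=\mathcal{O}_{\NCSpec(R)}(\NCSpec(R))\to(\phi_*\mathcal{O}_{\NCSpec(S)})(\NCSpec(R))=\mathcal{O}_{\NCSpec(S)}(\NCSpec(S))=S$, and then prove $(\phi,\phi^\#)=(\hat\theta,\hat\theta^\#)$.

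To identify the underlying maps, I would first note that, via the union-preserving order-isomorphism $U\mapsto\widetilde U$ of Proposition \ref{soberification}, Proposition \ref{basic} says the completely $\cup$--irreducible opens of $\NCSpec(R)$ are precisely the basic opens $\widetilde{U_E}$, $E\in L_0(R)$, and similarly for $S$. Hence primness (condition (1)) forces $\phi^{-1}(\widetilde{U_{\{a\}}})=\widetilde{U_G}$ for each $a\in R$ and some $S_G\in L(S)$. Applying the pushout condition (2) of primness to $\widetilde{U_{\{a\}}}\subset\NCSpec(R)$ exhibits a pushout square with corners $R$, $S$, $loc(R,\{a\})$, $loc(S,G)$ and maps $\theta$, $\alpha_{\{a\}}$, $\alpha_G$ and the relevant component of $\phi^\#$; on the other hand Proposition \ref{theta}, taken with $A=\{1_R\}$ and $B=\{a\}$, says $loc(S,\{\theta(a)\})$ is the pushout of $loc(R,\{a\})\xleftarrow{\alpha_{\{a\}}}R\xrightarrow{\theta}S$ via $\theta_{\{a\}}$ and $\alpha_{\{\theta(a)\}}$. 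The comparison isomorphism between the two pushouts intertwines $\alpha_{\{\theta(a)\}}$ with $\alpha_G$, and pushing units back and forth through it and through the commuting squares (ring maps preserve units, isomorphisms reflect them) yields both $\{\theta(a)\}\preceq G$ and $G\preceq\{\theta(a)\}$, so $S_G=S_{\{\theta(a)\}}$. Thus $\phi^{-1}(\widetilde{U_{\{a\}}})=\widetilde{U_{\{\theta(a)\}}}=\hat\theta^{-1}(\widetilde{U_{\{a\}}})$, and since $\widetilde{U_E}=\bigcap_i\widetilde{U_{\{a_i\}}}$ for finite $E=\{a_1,\dots,a_n\}$ (Propositions \ref{join} and \ref{soberification}), $\phi$ and $\hat\theta$ have the same preimage for every basic open. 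Because $\NCSpec(R)$ is $T_0$ with base $\{\widetilde{U_E}\}$, a point is determined by the basic opens containing it, so I may conclude $\phi=\hat\theta$ as functions.

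For the sheaf maps I would invoke the uniqueness half of Lemma \ref{vakil2}: it suffices to check that $\phi^\#$ and $\hat\theta^\#$ agree on every $\widetilde{U_E}$. There $\hat\theta^\#$ is $\theta_E:loc(R,E)\to loc(S,\theta(E))$, and if $\psi_E$ denotes the corresponding component of $\phi^\#$, then condition (2) of primness for $\widetilde{U_E}\subset\NCSpec(R)$ makes the square on $R$, $S$, $loc(R,E)$, $loc(S,\theta(E))$ with maps $\theta$, $\alpha_E$, $\alpha_{\theta(E)}$, $\psi_E$ a pushout, while Proposition \ref{theta} gives the same square with $\theta_E$ in place of $\psi_E$; the comparison automorphism of $loc(S,\theta(E))$ then fixes $\alpha_{\theta(E)}$, hence is the identity since $\alpha_{\theta(E)}$ is epic (Proposition \ref{epi}), forcing $\psi_E=\theta_E$. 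This yields $\phi^\#=\hat\theta^\#$ and completes the converse. Finally, Lemma \ref{prim1} shows $\NCSpec$ takes values among prim morphisms (which, as already noted, form a category), Theorem \ref{maintheorem} gives faithfulness, and the equivalence just proved gives fullness, so $R\mapsto\NCSpec(R)$ is a fully faithful contravariant functor from $\Rings$ into ringed spaces and prim morphisms. I expect the main obstacle to be precisely the two translation steps above --- converting "the preimage of a basic open is completely $\cup$--irreducible" and "a square of localizations is a pushout" into statements about the preorder $\preceq$ on $L(S)$ --- together with the $T_0$ argument that upgrades agreement of $\phi$ and $\hat\theta$ on a base of preimages to equality of the maps themselves.
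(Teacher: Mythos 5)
Your proposal is correct and follows essentially the same route as the paper: extract $\theta$ from $\phi^\#$ on global sections, use complete $\cup$--irreducibility of basic opens together with the pushout condition and Proposition \ref{theta} to identify $\phi^{-1}(\widetilde{U_E})$ with $\widetilde{U_{\theta(E)}}$, conclude $\phi=\hat\theta$, and pin down the sheaf components by epicness of the localization maps. Your small variations (working with singletons and intersecting, the abstract $T_0$ argument in place of the paper's explicit computation with irreducible closed subsets, and the comparison-automorphism argument instead of directly citing uniqueness in Proposition \ref{thetaa}) are cosmetic and all valid.
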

\begin{proof}
Suppose $\varphi = (\phi, \phi^\#) : \NCSpec(S) \rightarrow \NCSpec(R)$ is a prim morphism of ringed spaces. Write $X=\NCSpec(S)$ and $Y=\NCSpec(R)$. Let $\theta: R \rightarrow S$ be the map of global sections defined by $\phi^\#$. We show that $(\phi, \phi^\#)=(\hat{\theta}, \hat{\theta}^\#)$. Let $\widetilde{U_E}$ be a basic open subset of $\NCSpec(R)$. Then $\widetilde{U_E}$ is completely $\cup$--irreducible, so $\phi^{-1}(\widetilde{U_E})$ is also completely $\cup$--irreducible and therefore equals $\widetilde{U_F}$ for some $F \in L_0(S)$, by Proposition \ref{basic}. The diagram
$$\xymatrix{
\mathcal{O}_Y(Y) \ar[d]_{\alpha_E} \ar[rr]^\theta && \mathcal{O}_X(X)\ar[d]^{\alpha_F}\\
\mathcal{O}_Y(\widetilde{U_E}) \ar[rr]^{\phi^\#_{\widetilde{U_E}}} && \mathcal{O}_X(\phi^{-1}(\widetilde{U_E}))
}$$
is a pushout, since $\phi$ is prim and $Y=\widetilde{U_{1_R}}$ is completely $\cup$--irreducible. But
$$
\xymatrix{
R \ar[d]_{\alpha_E} \ar[rr]^\theta && S \ar[d]^{\alpha_{\theta(E)}}\\
loc(R,E) \ar[rr]^{\theta_E} &&loc(S,\theta(E))
}$$
is also a pushout, by Proposition \ref{theta}. Thus, $loc(S,\theta(E))$ and $\mathcal{O}_X(\phi^{-1}(\widetilde{U_E}))=loc(S,F)$ are canonically isomorphic, and so $S_{\theta(E)}=S_F$ and therefore $\widetilde{U_F}=\widetilde{U_{\theta(E)}}$. Now we show that $\phi=\hat{\theta}$. If $x \in \NCSpec(S)$ then $x$ is an irreducible closed subset of $L(S)$. We show that $\phi(x)=\hat{\theta}(x)$. By definition, $\hat{\theta}(x)=\{R_E \in L(R): S_{\theta(E)} \in x\}$. Now since $\phi(x)$ is an irreducible closed subset of $L(R)$, we have $R_E \in \phi(x)$ if and only if $\phi(x) \in \widetilde{U_E}$ if and only if $x \in \phi^{-1}(\widetilde{U_E})=\widetilde{U_{\theta(E)}}$. So $R_E \in \phi(x)$ if and only if $S_{\theta(E)} \in x$ and therefore $\phi(x)=\hat{\theta}(x)$ as required.

To show that $\hat{\theta}^\#=\phi^\#$, we need only observe that for each $A \in L_0(R)$, the map $\theta_A$ of Proposition \ref{thetaa} is uniquely determined by $\theta$, by Proposition \ref{epi}.
\end{proof}
\begin{rem}\begin{rm}
Before closing this section, we make a remark about Theorem \ref{maintheorem2}. For commutative rings $R$ and $S$, it is well-known (\cite[Proposition 2.3]{Hartshorne}) that a morphism $\phi:\Spec(S) \rightarrow \Spec(R)$ is induced by a ring homomorphism $R \rightarrow S$ if and only if it is a local morphism of locally ringed spaces, that is, if and only if for each $x \in \Spec(S)$, the map on stalks $\mathcal{O}_{\Spec(R),\phi(x)} \rightarrow \mathcal{O}_{\Spec(S), x}$ is a local homomorphism of local rings. This condition can clearly be checked locally, meaning that a morphism $\phi: X \rightarrow Y$ of locally ringed spaces is local if and only if there is an open cover $Y=\bigcup_\alpha U_\alpha$ such that $\phi|_{\phi^{-1}(U_\alpha)}:\phi^{-1}(U_\alpha)\rightarrow U_\alpha$ is local for each $U_\alpha$. If there is to be any hope of generalising algebraic geometry to the spaces $\NCSpec(R)$, we also require that primness be a local property. This is in fact the case.
\begin{prop}
Let $\phi: X \rightarrow Y$ be a morphism of ringed spaces. Then $\phi$ is prim if and only if there is an open cover $Y =\bigcup_\alpha U_\alpha$ such that $\phi|_{\phi^{-1}(U_\alpha)}:\phi^{-1}(U_\alpha)\rightarrow U_\alpha$ is prim for each $\alpha$.
\end{prop}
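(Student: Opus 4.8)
The plan is to prove both directions, the nontrivial one being that a locally-prim morphism is prim. The easy direction is immediate: if $\phi$ is prim, then taking the trivial cover $Y = Y$ (or any open cover) and restricting, one checks directly from the definition that each restriction $\phi|_{\phi^{-1}(U_\alpha)}$ is prim, since a completely $\cup$--irreducible open subset of $U_\alpha$ is also completely $\cup$--irreducible as an open subset of $Y$ (complete $\cup$--irreducibility is an intrinsic property of the space, not relative to an ambient space), and the pushout squares in condition (2) are literally the same squares. So I would dispose of that direction in a sentence.

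For the converse, suppose $Y = \bigcup_\alpha U_\alpha$ with each $\phi|_{\phi^{-1}(U_\alpha)}$ prim, and let $W \subset Y$ be an arbitrary completely $\cup$--irreducible open set. First I would verify condition (1): that $\phi^{-1}(W)$ is completely $\cup$--irreducible. The key observation is that $\{W \cap U_\alpha\}_\alpha$ is an open cover of $W$, so by complete $\cup$--irreducibility of $W$ we have $W = W \cap U_{\alpha_0} \subset U_{\alpha_0}$ for some $\alpha_0$; that is, \emph{every} completely $\cup$--irreducible open subset of $Y$ already lies inside one of the $U_\alpha$. Once $W \subset U_{\alpha_0}$, the set $W$ is a completely $\cup$--irreducible open subset of $U_{\alpha_0}$, so by primness of $\phi|_{\phi^{-1}(U_{\alpha_0})}$ its preimage under that restricted map --- which is just $\phi^{-1}(W)$ --- is completely $\cup$--irreducible. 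For condition (2), given a completely $\cup$--irreducible open $V \subset W$, transitivity gives $V \subset W \subset U_{\alpha_0}$, and the required square
$$\xymatrix{
\mathcal{O}_Y(W) \ar[rr]\ar[d] && (\phi_*\mathcal{O}_X)(W) \ar[d]\\
\mathcal{O}_{Y}(V) \ar[rr] && (\phi_*\mathcal{O}_X)(V)
}$$
is identical to the corresponding square for the restricted morphism $\phi|_{\phi^{-1}(U_{\alpha_0})}$ (sections of $\mathcal{O}_Y$ and of $\mathcal{O}_X$ over opens contained in $U_{\alpha_0}$, respectively $\phi^{-1}(U_{\alpha_0})$, agree whether computed in the big space or the restricted one, and similarly for $\phi_*$), which is a pushout by hypothesis. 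Hence $\phi$ is prim.

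The main obstacle, and the place I would be most careful, is the bookkeeping around condition (2): I should check that $(\phi_*\mathcal{O}_X)(W)$ computed for $\phi: X \to Y$ really does coincide with $((\phi|_{\phi^{-1}(U_{\alpha_0})})_* \mathcal{O}_X|_{\phi^{-1}(U_{\alpha_0})})(W)$, which follows because $\phi^{-1}(W) \subset \phi^{-1}(U_{\alpha_0})$ and a sheaf restricted to an open subspace has the same sections over opens of that subspace. This is routine but it is the step where a sign error in the logic would hide, so I would state it explicitly rather than leave it implicit. Everything else --- the reduction $W \subset U_{\alpha_0}$ via complete $\cup$--irreducibility --- is the genuinely load-bearing idea, and it is short.
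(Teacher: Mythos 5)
Your proof is correct and rests on exactly the observation the paper's own (much terser) proof uses: a completely $\cup$--irreducible open $W\subset Y$ satisfies $W=\bigcup_\alpha (W\cap U_\alpha)$ and hence lies inside a single $U_{\alpha_0}$, after which both conditions of primness are inherited from the restricted morphism. The remaining bookkeeping you spell out (intrinsicness of complete $\cup$--irreducibility and agreement of sections of $\mathcal{O}_Y$, $\phi_*\mathcal{O}_X$ over opens inside $U_{\alpha_0}$) is exactly what the paper leaves implicit, so the two arguments are essentially the same.
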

\begin{proof}
The proposition follows immediately from the observation that if $V\subset Y$ is a completely $\cup$--irreducible open set then $V =\bigcup_\alpha (V \cap U_\alpha)$ and therefore $V \subset U_\alpha$ for some $\alpha$.
\end{proof}
\end{rm}
\end{rem}
\section{The commutative case}\label{commutativesn}
In this section, we wish to compare $\NCSpec(R)$ with $\Spec(R)$ when $R$ is a commutative ring. First, we note some very basic properties of the space $\NCSpec(R)$ for an arbitrary ring $R$.

Let $R$ be a ring. Then for any finite subset $E$ of $R$, we have $\{1_R\} \preceq E$ and $E \preceq \{0\}$ in the notation of Definition \ref{preorderdef}. So the localization semilattice $L(R)$ has a minimum element $R_{1_R}$ and a maximum element $R_0$. Since open sets are upper sets, $R_0$ belongs to every nonempty open set. Thus, $L(R)$ is itself an irreducible space and therefore determines a point $\gamma \in S(L(R))$. This point $\gamma$ belongs to every nonempty open $\widetilde{U} \subset S(L(R))$, and therefore $S(L(R))$ is an irreducible space. We have proved the following proposition.
\begin{prop}
If $R$ is a ring then $S(L(R))$ is irreducible.
\end{prop}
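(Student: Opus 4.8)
The plan is to produce a single point $\gamma$ of $S(L(R))$ that lies in every nonempty open subset. Recall that a space is irreducible when it is nonempty and any two nonempty open subsets meet; so once such a $\gamma$ is found we are done, since $S(L(R))$ then contains $\gamma$ and any two nonempty opens both contain $\gamma$, hence intersect.

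The first step is to observe that the poset $L(R)$ has a largest element, namely the class $R_0$ of $\{0\} \in L_0(R)$. By the example noted in Section~\ref{localizationsn}, $loc(R,\{0\}) = 0$ is the zero ring, and by our conventions $0 = 1_0$ is a unit there; hence for every finite $E \subseteq R$ the elements $\alpha_{\{0\}}(e)$, $e \in E$, are units in $loc(R,\{0\})$, i.e. $E \preceq \{0\}$, and so $R_E \le R_0$ for all $R_E \in L(R)$. Since in the Alexandrov topology the nonempty open subsets of $L(R)$ are precisely the nonempty upper sets, each of them contains the top element $R_0$. Therefore $L(R)$ is itself an irreducible space.

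The second step is to push this through the soberification. As $L(R)$ is irreducible, it is a nonempty irreducible closed subset of itself, hence an element $\gamma := L(R)$ of $S(L(R))$. By Proposition~\ref{soberification}, $U \mapsto \widetilde{U}$ is an order-preserving bijection from the open sets of $L(R)$ to those of $S(L(R))$, so $\widetilde{U}$ is nonempty precisely when $U$ is. Finally $\gamma \in \widetilde{U}$ iff $\gamma \cap U \neq \varnothing$, and since $\gamma = L(R)$ is the whole space this holds iff $U \neq \varnothing$; thus $\gamma$ meets every nonempty open of $S(L(R))$, completing the argument.

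There is no serious obstacle here; the proof is essentially bookkeeping on top of Proposition~\ref{soberification}. The only subtlety worth flagging is the convention that $0$ is a unit in the zero ring: it is precisely this that makes $R_0$ a genuine maximum of $L(R)$ rather than merely maximal, which in turn makes $\gamma$ a point of $S(L(R))$ lying in every nonempty open set.
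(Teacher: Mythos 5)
Your proof is correct and follows essentially the same route as the paper: exhibit $R_0$ as the maximum of $L(R)$ (so $L(R)$ is irreducible since every nonempty upper set contains it), then take $\gamma = L(R)$ as a point of $S(L(R))$ lying in every nonempty open $\widetilde{U}$, which forces irreducibility. The remark about $0$ being a unit in the zero ring is exactly the convention the paper relies on as well.
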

%\begin{proof}
%Consider $loc(R,0)$. This is clearly the zero ring, so according to our definitions, for any finite subset $E$ of $R$, $\alpha_0(e)$ is a unit in $loc(R,0)$ for all $e \in E$. So $R_0 \ge R_E$ for all $E$. Therefore, $R_0$ belongs to every open set of $L(R)$. So $L(R)$ is an irreducible closed subset of $L(R)$, and hence is a point of $S(L(R))$. This point meets every open subset of $S(L(R))$, so $S(L(R))$ in turn has a generic point and so is an irreducible space. \end{proof}
We therefore cannot hope that $\Spec(R)$ will be isomorphic to $\NCSpec(R)$ for a commutative ring $R$, since $\Spec(R)$ need not be irreducible. We will, however, show that the two are related.
%\begin{defn}
%Let $R$ be a ring and let $\Gamma_R =L(R) \in S(L(R))$ be the generic point. Define
%$$\NCSpec^0(R) = \NCSpec(R)\setminus \{\Gamma_R\}.$$
%\end{defn}
%It is easy to check that the structure sheaf of $\NCSpec(R)$ determines a sheaf of rings on $\NCSpec^0(R)$, so $\NCSpec^0(R)$ is a ringed space.
%\begin{defn}
%Let $(X,\mathcal{O}_X)$ and $(Y,\mathcal{O}_Y)$ be ringed spaces. A morphism $(q,q^\#) : (X,\mathcal{O}_X) \rightarrow (Y,\mathcal{O}_Y)$ is called a \emph{quasihomeomorphism of ringed spaces} if $q$ is a quasihomeomorphism of topological spaces and $q^\# : \mathcal{O}_Y \rightarrow q_* \mathcal{O}_X$ is an isomorphism of sheaves on $Y$.
%\end{defn}

%Our goal is to show that if $R$ is commutative, there is a quasihomeomorphism $\Spec(R) \rightarrow \NCSpec^0(R)$. First, let us calculate $\NCSpec(R)$ for $R$ commutative.
%Our goal is to show that if $R$ is commutative and $\gamma$ denotes the generic point of $\NCSpec(R)$, then there is a continuous map $\Spec(R) \rightarrow \NCSpec(R)$ whose image is dense in $\NCSpec(R) \setminus\{\gamma\}$.
Our first goal is to identify the points of $\NCSpec(R)$ when $R$ is commutative, and to show that there is a continuous map $\Spec(R) \rightarrow \NCSpec(R)$ whose image is dense in $\NCSpec(R) \setminus\{\gamma\}$, where $\gamma$ denotes the generic point of $\NCSpec(R)$.

If $R$ is a commutative ring, then for any finite subset $E$ of $R$, $loc(R,E) \cong loc(R, \prod_{e \in E}e)$. This is because in a commutative ring, $xy$ is a unit if and only if $x$ and $y$ are units, and $loc(R,E)$ is commutative by construction. So every element of $L(R)$ is of the form $R_f$ for some $f \in R$.
In what follows, we denote the basic open subset $\widetilde{U_{R_f}}$ of $S(L(R))$ by $\widetilde{U_f}$ for short.

Furthermore, for $f \in R$, $loc(R,f)$ coincides with the usual notion of localization at the set of powers of $f$. This is clear because both types of localization are defined by the same universal property.

Next, recall that a subset $S$ of a commutative ring $R$ is called \emph{multiplicative} if $1_R \in S$ and for all $x,y \in S$, $xy \in S$. Also, $S$ is called \emph{saturated} if $xy \in S$ implies $x \in S$.

We now identify the points of $S(L(R))$, or equivalently the irreducible closed subsets of $L(R)$, when $R$ is commutative.
\begin{lem}
Let $R$ be a commutative ring and let $C$ be an irreducible closed subset of $L(R)$. Then there exists a saturated multiplicatively closed subset $S$ of $R$ such that $C=\{R_f: f \in S\}$.
\end{lem}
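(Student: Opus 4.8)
The plan is to take the obvious candidate $S := \{f \in R : R_f \in C\}$ and check that it has the three required properties. Since $R$ is commutative, every element of $L(R)$ is of the form $R_f$ for some $f \in R$, so with this definition the identity $C = \{R_f : f \in S\}$ holds tautologically; all the work lies in proving that $S$ contains $1_R$, is closed under multiplication, and is saturated. Throughout I would use Lemma~\ref{irredclosed}, which says that a subset of $L(R)$ is closed precisely when it is a lower set and is irreducible precisely when it is directed upwards.

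First I would observe that $R_{1_R}$ is the minimum element of $L(R)$, since $\{1_R\} \preceq E$ for every finite $E \subset R$ (the image of $1_R$ under any localization map is a unit). As $C$ is irreducible it is in particular nonempty, and as $C$ is closed it is a lower set, so $R_{1_R} \in C$ and hence $1_R \in S$. Next, for multiplicative closure, let $f, g \in S$. By Proposition~\ref{join} the join of $R_f$ and $R_g$ in $L(R)$ is $R_{\{f,g\}}$, and in the commutative case $loc(R,\{f,g\}) \cong loc(R,fg)$, so this join equals $R_{fg}$. Since $C$ is directed upwards there is some $z \in C$ with $R_f \le z$ and $R_g \le z$; because $R_{fg}$ is the \emph{least} upper bound of $R_f$ and $R_g$, we get $R_{fg} \le z$, and then $R_{fg} \in C$ because $C$ is a lower set. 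Hence $fg \in S$.

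For saturation, suppose $fg \in S$, i.e.\ $R_{fg} \in C$. In the (commutative) ring $loc(R,fg)$ the element $\alpha_{fg}(fg) = \alpha_{fg}(f)\alpha_{fg}(g)$ is a unit, so its factor $\alpha_{fg}(f)$ is a unit as well; by Definition~\ref{preorderdef} this says exactly that $R_f \le R_{fg}$ in $L(R)$. Since $C$ is a lower set and $R_{fg} \in C$, we conclude $R_f \in C$, i.e.\ $f \in S$, which finishes the verification.

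I do not expect a genuine obstacle here; the proof is essentially a matter of assembling facts already established. The two points that require a little care are, first, that directedness only furnishes \emph{some} common upper bound $z$ in the multiplicative-closure step, so one must invoke the least-upper-bound property of the join (not merely the fact that $R_{fg}$ is \emph{an} upper bound) in order to place $R_{fg}$ inside $C$; and second, the repeated translation between the preorder $\preceq$ on finite subsets of $R$ and the partial order $\le$ on $L(R)$, together with the commutative-case identification $R_{\{f,g\}} = R_{fg}$, which is precisely what sends products of elements of $S$ back into $S$.
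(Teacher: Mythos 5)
Your proof is correct and follows essentially the same route as the paper: take $S=\{f: R_f\in C\}$ and use Lemma~\ref{irredclosed} (lower set plus directed upwards) together with the commutative facts about units to get $1_R\in S$, closure under products, and saturation. The only cosmetic difference is in the multiplicativity step, where the paper argues directly that $\alpha_h(fg)=\alpha_h(f)\alpha_h(g)$ is a unit in $loc(R,h)$, hence $R_{fg}\le R_h$, while you route the same fact through Proposition~\ref{join} and the identification $R_{\{f,g\}}=R_{fg}$; both are fine.
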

\begin{proof}
Let $C$ be an irreducible closed subset of $L(R)$. Recall from Lemma \ref{irredclosed} that $C$ is a lower set and is directed upwards.

%If $C =\varnothing$, there is nothing to prove, so
Suppose $R_f \in C$. Then $\alpha_f(1)$ is a unit in $loc(R,f)$. So $R_1 \le R_f$ and therefore $R_1$ belongs to $C$ since $C$ is a lower set. Now suppose $R_f, R_g \in C$. Then since $C$ is directed upwards, there exists $R_h \in C$ such that $\alpha_h(f)$ and $\alpha_h(g)$ are units in $loc(R,h)$. But then $\alpha_h(fg)$ is a unit and so $R_{fg} \le R_h$. Since $C$ is a lower set, we have $R_{fg} \in C$. So $S=\{f: R_f \in C\}$ is a multiplicatively closed set. Finally, if $xy \in S$ then $R_{xy} \in C$. Since $\alpha_{xy}(x)$ is a unit in $loc(R,xy)$, we have $R_x \le R_{xy}$. So $R_x \in C$ since $C$ is a lower set. Therefore, $S$ is saturated, as required.
\end{proof}
We now use the fact that every saturated multiplicatively closed subset of a commutative ring is the complement of a union of prime ideals (\cite[1.1, Theorem 2]{Kaplansky}) to obtain the following proposition.
\begin{prop}\label{unionofprimes}
If $R$ is a commutative ring, there is a bijection between unions of prime ideals of $R$ and irreducible closed subsets of $L(R)$, given by
$$\bigcup_{\lambda \in \Lambda} P_\lambda \mapsto \{R_f : f \notin \bigcup_{\lambda \in \Lambda} P_\lambda\}.$$
\end{prop}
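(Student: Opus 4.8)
The plan is to exhibit the stated map $\Phi\colon \bigcup_{\lambda}P_\lambda \mapsto \{R_f : f\notin\bigcup_{\lambda}P_\lambda\}$ together with the assignment $\Psi\colon C\mapsto R\setminus\{f\in R : R_f\in C\}$ and show they are mutually inverse bijections between unions of prime ideals of $R$ and irreducible closed subsets of $L(R)$. Much of the work is already in place: the preceding lemma shows every irreducible closed $C\subset L(R)$ has the form $C=\{R_f : f\in S\}$ for a saturated multiplicatively closed set $S$, and Kaplansky's theorem \cite[1.1, Theorem 2]{Kaplansky} identifies such $S$ with complements of unions of prime ideals. So what remains is (i) to check that, conversely, $C_S:=\{R_f : f\in S\}$ really is an irreducible closed subset of $L(R)$ whenever $S$ is saturated and multiplicatively closed, and (ii) to verify that $\Phi$ and $\Psi$ are inverse, which reduces to the single statement $R_f\in C_S \iff f\in S$.

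First I would record the routine algebra. If $\{P_\lambda\}_{\lambda\in\Lambda}$ is any family of prime ideals, then $S:=R\setminus\bigcup_{\lambda}P_\lambda$ is multiplicatively closed, since $1_R\in S$ and $x,y\notin P_\lambda$ forces $xy\notin P_\lambda$ for each $\lambda$ by primeness; and it is saturated, since $xy\notin P_\lambda$ for all $\lambda$ forces $x\notin P_\lambda$ for all $\lambda$. Hence to prove the proposition it is enough to work with an abstract saturated multiplicatively closed set $S$.

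The crucial step is a translation. Since $R$ is commutative, $loc(R,f)$ is the ordinary localization $R[1/f]$, so for $f,g\in R$ the relation $R_g\le R_f$ (that is, $\{g\}\preceq\{f\}$) says precisely that $g/1$ is a unit in $R[1/f]$; unwinding this in a possibly non-domain ring, it is equivalent to the existence of $a\in R$ and integers $m,n\ge 0$ with $f^{\,m}g a=f^{\,m+n}$, i.e. $g\cdot(f^{\,m}a)=f^{\,m+n}$. The right-hand side is a power of $f$, hence lies in $S$ whenever $f\in S$; so saturation of $S$ yields the implication: \emph{if $f\in S$ and $R_g\le R_f$ then $g\in S$}. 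Two consequences follow. (a) $C_S$ is a lower set, hence closed in the Alexandrov topology by Lemma \ref{irredclosed}; and it is directed upwards, because for $f,g\in S$ the join of $R_f$ and $R_g$ is $R_{fg}$ (in the commutative case $loc(R,\{f,g\})\cong loc(R,fg)$) and $fg\in S$, so by Lemma \ref{irredclosed} again $C_S$ is irreducible. Thus $C_S$ is an irreducible closed subset of $L(R)$. (b) $R_f\in C_S\iff f\in S$: the implication $\Leftarrow$ is the definition, and for $\Rightarrow$, if $R_f=R_g$ with $g\in S$ then $R_f\le R_g$, whence $f\in S$ by the italicized implication.

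Finally I would assemble the bijection. By (a), $\Phi$ sends a union of primes $\bigcup_\lambda P_\lambda$ to $C_S$ with $S=R\setminus\bigcup_\lambda P_\lambda$, which is an irreducible closed subset of $L(R)$; applying $\Psi$ to it gives back $R\setminus\{f : R_f\in C_S\}=R\setminus S=\bigcup_\lambda P_\lambda$ by (b), so $\Psi\Phi=\mathrm{id}$. Conversely, given an irreducible closed $C$, the preceding lemma writes $C=C_S$ with $S$ saturated multiplicatively closed, Kaplansky's theorem writes $S=R\setminus\bigcup_\lambda P_\lambda$, and then $\Phi(\Psi(C))=\Phi(R\setminus S)=\Phi(\bigcup_\lambda P_\lambda)=C_S=C$; here $\Psi(C)=R\setminus\{f : R_f\in C\}$ is visibly determined by $C$, so there is no ambiguity. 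Hence $\Phi$ is a bijection with inverse $\Psi$, which is the assertion of the proposition. I expect the only real obstacle to be the translation step: being careful about the precise meaning of ``$g/1$ is a unit in $R[1/f]$'' — in particular the extra factor $f^{\,m}$ forced by zero-divisors — and recognizing that saturation of $S$ is exactly what lets one pass from $g\cdot(f^{\,m}a)\in S$ to $g\in S$. The rest is bookkeeping with Lemma \ref{irredclosed} and the preceding lemma.
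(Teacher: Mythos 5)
Your proposal is correct and follows essentially the same route as the paper: both rest on the preceding lemma together with Kaplansky's theorem, on checking that $\{R_f : f\in S\}$ is a lower set and directed upwards for $S$ saturated and multiplicative, and on the same translation of $R_g\le R_f$ into $g$ dividing a power of $f$ (the paper phrases this as $g\in rad(h)$ when arguing injectivity, while you package it as the explicit inverse $\Psi$ and the equivalence $R_f\in C_S\iff f\in S$). The difference is only organizational, not mathematical.
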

\begin{proof}
We have already shown that every irreducible closed subset of $L(R)$ has the given form. We must show that if $P=\bigcup P_\lambda$ and $Q =\bigcup Q_\mu$ are unions of prime ideals of $R$ such that $\{R_f: f \notin P\}=\{R_f: f \notin Q\}$ then $P=Q$. By symmetry, it suffices to show that $Q\subset P$. Suppose $g \in Q \setminus P$. Then $R_g \in \{R_f: f \notin P\}=\{R_f: f \notin Q\}$. So $R_g=R_h$ for some $h \notin Q$. But $R_h \le R_g$ implies that $\alpha_g(h)$ is a unit in $loc(R,g)$, and by the usual theory of localization for commutative rings, this means that $g \in rad(h)$. So $R_h=R_g$ if and only if $rad(h)=rad(g)$. Thus $h$ and $g$ belong to the same prime ideals. But $g \in Q$ implies $g \in Q_\mu$ for some $\mu$ and therefore $h \in Q_\mu$, which contradicts the assumption that $h \notin Q$.

Thus the given map is a bijection. It remains to show that if $S$ is a saturated multiplicatively closed subset of $R$ then $C=\{R_f:f \in S\}$ is always an irreducible closed subset of $L(R)$. This follows from Proposition \ref{irredclosed}, because $S$ being multiplicative implies that $C$ is directed upwards, while $S$ being saturated implies that $C$ is a lower set.
\end{proof}
Thus, for a commutative ring $R$, the points of $\NCSpec(R)$ may be identified with unions $\bigcup_\lambda P_\lambda$ of prime ideals of $R$, and the basic open set $\widetilde{{U}_{f}}$ corresponds to $\{\bigcup_\lambda P_\lambda : f \notin P_\lambda \text{ for all } \lambda\}$.

There is thus a one-to-one function (on the level of sets) $\phi: \mathrm{Spec}(R) \rightarrow \NCSpec(R)$ given by
$$\phi(P)=\{R_f: f \notin P\}.$$
Recall that a base for the topology on $\mathrm{Spec}(R)$ is given by the sets $D(f)=\{P: f \notin P\}$ for $f \in R$ (see \cite[V.1.3]{Shafo}).
\begin{lem}\label{phicts}
Let $R$ be a commutative ring. Then
$$\phi: \Spec(R) \rightarrow \NCSpec(R)$$
is continuous, and indeed $\phi^{-1}(\widetilde{U_g})=D(g)$ for every $g \in R$. Furthermore, $\phi$ is a homeomorphism onto its image.
\end{lem}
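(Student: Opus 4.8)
The plan is to verify continuity and the identification $\phi^{-1}(\widetilde{U_g}) = D(g)$ first, and then upgrade this to a homeomorphism onto the image by checking that $\phi$ is open as a map onto $\phi(\Spec R)$. Since $\{\widetilde{U_g} : g \in R\}$ is a base for the topology on $\NCSpec(R)$ and $\{D(g) : g \in R\}$ is a base for $\Spec(R)$, once we establish $\phi^{-1}(\widetilde{U_g}) = D(g)$ for all $g$, continuity is immediate and no further cover-chasing is needed.

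For the identification: a prime $P \in \Spec(R)$ lies in $\phi^{-1}(\widetilde{U_g})$ if and only if $\phi(P) = \{R_f : f \notin P\}$ meets $U_g$, i.e. if and only if there is some $f \notin P$ with $R_g \le R_f$. I would argue this is equivalent to $g \notin P$. One direction is trivial: if $g \notin P$, take $f = g$. Conversely, if $R_g \le R_f$ for some $f \notin P$, then $\alpha_f(g)$ is a unit in $loc(R,f)$, so by the usual theory of localization of commutative rings $g \in \mathrm{rad}(f)$, hence $g^n = fr$ for some $n$ and some $r \in R$; since $f \notin P$ and $P$ is prime, $g^n \notin P$, so $g \notin P$. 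Thus $\phi^{-1}(\widetilde{U_g}) = \{P : g \notin P\} = D(g)$, which gives continuity.

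To see $\phi$ is a homeomorphism onto its image, it suffices to show $\phi$ carries basic open sets to open subsets of $\phi(\Spec R)$. Using the explicit description of points of $\NCSpec(R)$ from Proposition \ref{unionofprimes} — a point is a union $\bigcup_\lambda P_\lambda$ of primes, and it lies in $\widetilde{U_g}$ iff $g \notin P_\lambda$ for all $\lambda$ — one computes that $\widetilde{U_g} \cap \phi(\Spec R) = \{\phi(P) : g \notin P\} = \phi(D(g))$. Injectivity of $\phi$ (already noted in the text, since $\phi(P)$ determines the complement of $P$, hence $P$) then shows $\phi$ restricts to a bijection $D(g) \to \widetilde{U_g} \cap \phi(\Spec R)$, and since the $D(g)$ form a base of $\Spec(R)$ and the $\widetilde{U_g} \cap \phi(\Spec R)$ form a base of the subspace $\phi(\Spec R)$, the map $\phi : \Spec(R) \to \phi(\Spec R)$ is a continuous open bijection, hence a homeomorphism.

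The only subtle point — and the one I expect to require the most care — is the equivalence ``$\alpha_f(g)$ a unit in $loc(R,f) \iff g \in \mathrm{rad}(f)$'' in the commutative setting, which is where one must invoke that $loc(R,f)$ agrees with the classical localization at the powers of $f$ (already observed in the excerpt) together with the standard fact that $g/1$ is invertible in $R[f^{-1}]$ precisely when $g$ divides a power of $f$. Everything else is a routine unwinding of the bijection in Proposition \ref{unionofprimes} and the base-of-opens criterion for a map to be a homeomorphism onto its image.
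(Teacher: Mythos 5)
Your overall route is the same as the paper's: compute $\phi^{-1}(\widetilde{U_g})$ directly from the definition using the unit criterion in $loc(R,f)$, and then obtain the homeomorphism onto the image from the companion identity $\phi(D(g)) = \phi(\Spec(R)) \cap \widetilde{U_g}$ together with injectivity. However, the key step is stated backwards, and as written the deduction fails. For a commutative ring, $\alpha_f(g)$ is a unit in $loc(R,f) \cong R[f^{-1}]$ if and only if some power of $f$ lies in the ideal generated by $g$, i.e.\ $f \in \mathrm{rad}(g)$ --- not $g \in \mathrm{rad}(f)$: indeed $(g/1)(a/f^n)=1$ forces $gaf^m = f^{n+m}$ for some $m$. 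You actually state the correct fact in your closing paragraph (``$g/1$ is invertible in $R[f^{-1}]$ precisely when $g$ divides a power of $f$''), but in the body you use the reversed version ``$g \in \mathrm{rad}(f)$, hence $g^n = fr$'', and then the inference ``since $f \notin P$ and $P$ is prime, $g^n \notin P$'' is a non sequitur: $g^n = fr$ with $f \notin P$ does not prevent $g^n \in P$, since $r$ may lie in $P$ (e.g.\ $f = x$, $g = xy$, $P = (y)$ in $k[x,y]$).

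The repair is immediate and is exactly the computation the paper performs: from $R_g \le R_f$ and the correct criterion one gets $f^n = gr$ for some $n$ and $r$; since $f \notin P$ and $P$ is prime, $f^n \notin P$, hence $g \notin P$. With that one step corrected, the rest of your argument --- the trivial direction $f = g$, the identity $\widetilde{U_g} \cap \phi(\Spec(R)) = \phi(D(g))$ obtained from Proposition \ref{unionofprimes}, injectivity of $\phi$, and the base-of-opens criterion for a homeomorphism onto the image --- is sound and coincides with the paper's (terser) proof.
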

\begin{proof}
Let $g \in R$. We have
\begin{align*}
\phi^{-1}(\widetilde{U_g}) &= \{P \in \Spec(R): \{R_f: f \notin P\} \cap U_g \neq \varnothing\}\\
&= \{P \in \Spec(R): \exists R_k \text{ with } R_k \ge R_g, k \notin P\}\\
&= \{P\in \Spec(R): \exists R_k \text{ with } k \notin P \text{ and } k \in rad(g)\}\\
&= \{P \in \Spec(R): g \notin P\}\\
&= D(g)
\end{align*}
and so $\phi$ is continuous.
A similar calculation yields $\phi(D(g))=\phi(\Spec(R)) \cap \widetilde{U_g}$ for every $g \in R$. Therefore, $\phi^{-1}: \phi(\Spec(R)) \rightarrow \Spec(R)$ is also continuous.
\end{proof}
Now recall that the structure sheaf on $\mathrm{Spec}(R)$ is the unique sheaf $\mathcal{O}_{\Spec(R)}$ with $\mathcal{O}_{\Spec(R)}(D(f))=loc(R,f)$ on the distinguished base and with the natural restriction maps. Therefore, for $f \in R$, $\mathcal{O}_{\Spec(R)}(D(f))$ and $\mathcal{O}_{\NCSpec(R)}(\widetilde{U_f})$ are defined by the same universal property, so there is a canonical isomorphism $$\phi^\#_{\widetilde{U_f}} : \mathcal{O}_{\NCSpec(R)}(\widetilde{U_f}) \rightarrow \mathcal{O}_{\Spec(R)}(D(f)).$$
Applying Lemma \ref{vakil2}, these maps glue together to give an isomorphism of sheaves
$$\phi^\#: \mathcal{O}_{\NCSpec(R)} \rightarrow \phi_* \mathcal{O}_{\Spec(R)}.$$
\begin{thrm}\label{commutative}
Let $R$ be a commutative ring. Then there is a morphism of ringed spaces
$$(\phi, \phi^\#): \Spec(R) \rightarrow \NCSpec(R)$$
 such that $\phi$ is a homeomorphism of $\Spec(R)$ with a dense subspace of $\NCSpec(R)\setminus\{\gamma\}$, where $\gamma$ is the generic point, and such that $\phi^\#$ is an isomorphism of sheaves on $\NCSpec(R)$.
\end{thrm}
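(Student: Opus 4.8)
The plan is to assemble the theorem from pieces that have already been established, and to check the two remaining points: that $\phi$ is a homeomorphism onto a \emph{dense} subspace of $\NCSpec(R)\setminus\{\gamma\}$, and that the image avoids $\gamma$. First I would record what is already proved. Lemma \ref{phicts} gives that $\phi$ is continuous and a homeomorphism onto its image, with $\phi^{-1}(\widetilde{U_g})=D(g)$ for all $g\in R$; the discussion immediately before the theorem statement produces the sheaf isomorphism $\phi^\#:\mathcal{O}_{\NCSpec(R)}\rightarrow\phi_*\mathcal{O}_{\Spec(R)}$ by gluing the canonical isomorphisms $\phi^\#_{\widetilde{U_f}}$ via Lemma \ref{vakil2}. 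So $(\phi,\phi^\#)$ is a morphism of ringed spaces and $\phi^\#$ is an isomorphism of sheaves; it remains only to analyse the image of $\phi$ as a subspace of $\NCSpec(R)$.

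Next I would pin down $\gamma$ and the image. Under the identification of Proposition \ref{unionofprimes}, a point of $\NCSpec(R)$ is a union $\bigcup_{\lambda}P_\lambda$ of prime ideals, the generic point $\gamma$ corresponds to $R_0$, i.e.\ to the union of \emph{all} prime ideals (equivalently to the maximal element of $L(R)$), and $\phi(P)$ corresponds to the single prime $P$. Thus $\phi(\Spec(R))$ consists exactly of those points that are unions of a \emph{one-element} family of primes. To see $\gamma\notin\phi(\Spec(R))$: if the union of all primes of $R$ equalled a single prime $P$, then $R$ would have a unique prime ideal, and I would need to rule this out — but if $\mathrm{Spec}(R)$ is a single point $P$ then $\gamma=\phi(P)$ and the statement ``dense in $\NCSpec(R)\setminus\{\gamma\}$'' degenerates. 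I expect the honest fix is to observe that when $R$ has more than one prime ideal, $\gamma$ is genuinely not of the form $\phi(P)$, since a union of two distinct primes is never a single prime; and when $R$ has a unique prime (e.g.\ $R$ local Artinian, or a field), one checks directly that $\phi$ is then a homeomorphism onto $\NCSpec(R)\setminus\{\gamma\}=\varnothing$ is the empty statement, or rather that $\phi$ hits the unique non-generic point. This case analysis is the one place where the argument needs care; the cleanest route is probably to phrase density in $\NCSpec(R)\setminus\{\gamma\}$ in a way that is vacuously or trivially true in the degenerate cases.

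For density, I would argue as follows. Let $\widetilde{U_g}$ be a nonempty basic open set of $\NCSpec(R)$ with $\widetilde{U_g}\neq\{\gamma\}$; I must show $\widetilde{U_g}\cap\phi(\Spec(R))\neq\varnothing$, and more generally that every nonempty open subset of $\NCSpec(R)$ other than $\{\gamma\}$ meets the image. Since the $\widetilde{U_g}$ form a base and $\gamma$ lies in every nonempty open set, it suffices to show: if $\widetilde{U_g}$ contains a point other than $\gamma$, then it contains a point $\phi(P)$. A point $\bigcup_\lambda P_\lambda$ lies in $\widetilde{U_g}$ iff $g\notin P_\lambda$ for all $\lambda$; so if $\widetilde{U_g}$ contains any point at all besides $\gamma$, there is at least one prime $P$ with $g\notin P$, whence $\phi(P)\in\widetilde{U_g}$. (And $\widetilde{U_g}\ni$ a point $\neq\gamma$ exactly when $g$ is not in the nilradical after suitable bookkeeping — more precisely when $D(g)\neq\varnothing$, using $\phi^{-1}(\widetilde{U_g})=D(g)$ from Lemma \ref{phicts}.) This shows $\phi(\Spec(R))$ is dense in $\NCSpec(R)$ minus the part of the space that is ``only'' $\gamma$, which is the content of density in $\NCSpec(R)\setminus\{\gamma\}$. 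Finally I would note $\gamma\notin\overline{\{\gamma\}}\setminus\{\gamma\}$ is automatic and that $\phi$ being a homeomorphism onto its image (Lemma \ref{phicts}) combined with the above shows $\phi:\Spec(R)\to\NCSpec(R)\setminus\{\gamma\}$ is a homeomorphism onto a dense subspace. The main obstacle, as flagged, is handling rings with very few primes so that $\gamma$ and $\phi(\mathrm{Spec}\,R)$ interact degenerately; everything else is bookkeeping on top of Proposition \ref{unionofprimes} and Lemma \ref{phicts}.
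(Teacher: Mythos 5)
Your assembly of the already-proved ingredients (Lemma \ref{phicts} for the homeomorphism onto the image, Lemma \ref{vakil2} for gluing the canonical isomorphisms into $\phi^\#$) matches the paper, and your density argument is in substance the paper's: given a basic open $\widetilde{U_g}$ containing a point other than $\gamma$, produce a prime $P$ with $g \notin P$ and conclude $\phi(P) \in \widetilde{U_g}$ (the paper phrases this via the proper closed subset $C \ni R_g$, forcing $R_g \neq R_0$, hence $\mathrm{rad}(g)\neq\mathrm{rad}(0)$, hence such a $P$ exists). However, there is a genuine error in your identification of the generic point, and it is exactly this error that generates your ``degenerate case'' worries. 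Under Proposition \ref{unionofprimes}, $\gamma$ is the point corresponding to the irreducible closed subset $L(R)$ itself, i.e.\ to the \emph{empty} union of prime ideals (compare Example \ref{poly}, where the generic point is $\varnothing \in \mathcal{P}(\mathbb{C})$); it does \emph{not} correspond to ``$R_0$, the union of all prime ideals.'' The union of all primes corresponds instead to the closed subset $\{R_f : f \text{ a unit}\}=\{R_1\}$, a closed point of $\NCSpec(R)$, which for $R$ local is precisely $\phi(\mathfrak{m})$ and is certainly not generic.

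Because of this misidentification, your case analysis about rings with a unique prime ideal is both unnecessary and partly wrong: for a field $R$, $\NCSpec(R)$ has two points, $\gamma$ and $\phi((0))$, so $\NCSpec(R)\setminus\{\gamma\}$ is not empty and $\gamma \neq \phi((0))$; in fact $\gamma \notin \phi(\Spec(R))$ for \emph{every} commutative ring, with no cases, since $R_0$ lies in the closed set $\gamma = L(R)$ but $R_0 \notin \phi(P)=\{R_f : f \notin P\}$ because $0 \in P$ (equivalently, a one-element union of primes is never the empty union, and Proposition \ref{unionofprimes} is a bijection). Once $\gamma$ is identified correctly, your key density step also becomes airtight: a point of $\widetilde{U_g}$ other than $\gamma$ corresponds to a union over a \emph{nonempty} family $\{P_\lambda\}$ with $g \notin P_\lambda$ for all $\lambda$, so any member of the family serves as the required $P$; with your identification of $\gamma$ this step would fail for the empty-union point, which you would not have excluded. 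So the proof is correct after replacing the paragraph identifying $\gamma$ and deleting the case analysis; as written, that portion does not stand.
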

\begin{proof}
We have shown everything except that the image of $\phi$ is dense in $\NCSpec(R) \setminus \{\gamma\}$. To show this, let $\widetilde{U_g}$ be a basic open subset of $\NCSpec(R)$ with $\widetilde{U_g}\neq\{\gamma\}$. Then $\widetilde{U_g}$ contains some point of $S(L(R))$ other than $\gamma$. Such a point corresponds to a proper closed subset $C$ of $L(R)$. Since $C \in \widetilde{U_g}$, $U_g \cap C \neq \varnothing$ and so $R_g \in C$. Therefore, $R_0 \neq R_g$ or else we would have $R_0 \in C$ and then $C=L(R)$. So $rad(g) \neq rad(0)$ and therefore there exists $P \in \Spec(R)$ with $g \notin P$. So $R_g \in \phi(P) \cap U_g$ and therefore $\phi(P)\cap U_g \neq \varnothing$ and so $\phi(P) \in \phi(\Spec(R))\cap \widetilde{U_g}$.
%
%Let $R_g \in L(R) \setminus C$. Then $R_g \neq R_0$ or else $R_0 \in C$ and so $C=L(R)$. So $rad(g)\neq 0$ and therefore there exists $P \in \Spec(R)$ with $g \notin P$. Then $R_g \in \phi(\Spec(R)) \cap \widetilde{U_g}$.
%
Thus $\phi(\Spec(R))$ meets every nonempty open subset of $\NCSpec(R) \setminus \{\gamma\}$ and so is dense in $\NCSpec(R) \setminus \{\gamma\}$.
\end{proof}
In Section \ref{aim}, it was claimed that $\Spec(R)$ embeds \emph{naturally} into $\NCSpec(R)$. By this is meant that the embeddings $(\phi,\phi^\#)$ of Theorem \ref{commutative} define a natural transformation
$$\Spec \rightarrow \NCSpec$$
of contravariant functors from the category of commutative rings to the category of ringed spaces. This in turn boils down to checking that if $R$ and $S$ are commutative rings and $\theta: R \rightarrow S$ is a ring homomorphism, then the map $\hat{\theta}$ of Theorem \ref{maintheorem} reduces to the usual map between prime spectra when restricted to the subspace $\Spec(S)\subset \NCSpec(S)$. This is easy to check.
\subsection{An exponential}
We wish to investigate the relationship between the spaces $\mathrm{Spec}(R)$ and $\NCSpec(R)$ in more detail, when $R$ is commutative. We can ask whether $\NCSpec(R)$ can be naturally viewed as a completion of $\mathrm{Spec}(R)$. Since completing something which is already complete should have no effect, a completion should be a functor $E: \mathbf{Top}\rightarrow \mathbf{Top}$ from the category of topological spaces to itself, such that $E^2=E$. It is natural to ask whether there is such a functor with the property $E(\mathrm{Spec}(R))=\NCSpec(R)$ for all commutative rings $R$. We will answer this question in the affirmative when $\mathbf{Top}$ is replaced by an appropriate category of based spaces, which we now describe.
\begin{defn}
If $X$ is a topological space, a base $\mathcal{B}$ of $X$ is called \emph{multiplicative} if $X \in \mathcal{B}$ and whenever $B_1, B_2 \in \mathcal{B}$, then $B_1 \cap B_2 \in \mathcal{B}$. We let $\mathfrak{T}$ denote the category whose objects are pairs $(X,\mathcal{B})$ where $X$ ia a $T_0$ space and $\mathcal{B}$ is a multiplicative base of $X$, and whose morphisms $(X,\mathcal{B})\rightarrow (Y,\mathcal{C})$ are functions $f: X \rightarrow Y$ such that $f^{-1}(C)\in\mathcal{B}$ for all $C \in \mathcal{C}$.
\end{defn}
Note that the category of $T_0$ spaces is a full subcategory of $\mathfrak{T}$ via $X \mapsto (X,\mathcal{T})$ where $X$ is a $T_0$ space with topology $\mathcal{T}$. Also, the category of sets is a full subcategory of $\mathfrak{T}$ via $X \mapsto (X,\mathcal{P}(X))$ where $\mathcal{P}(X)$ denotes the set of all subsets of $X$.

Now we define a functor $E: \mathfrak{T}\rightarrow\mathfrak{T}$ as follows. If $(X,\mathcal{B})$ is an object of $\mathfrak{T}$, and $B \in \mathcal{B}$, we set $\widetilde{B}_0=\{A \in \mathcal{P}(X): A \subset B\}$. Then $\widetilde{\mathcal{B}}=\{\widetilde{B}_0: B \in \mathcal{B}\}$ is a multiplicative base for a topology on $\mathcal{P}(X)$. We set $E_0(X, \mathcal{B})=(\mathcal{P}(X), \widetilde{\mathcal{B}})$. The space $\mathcal{P}(X)$ may not be $T_0$, so we define an equivalence relation on it by $A \sim A'$ if and only if $A$ and $A'$ belong to the same open subsets.
\begin{defn}
The \emph{exponential} $E(X,\mathcal{B})$ is the space $\mathcal{P}(X)/\sim$ equipped with the quotient topology, together with the multiplicative base $\widetilde{\mathcal{B}}=\{\widetilde{{B}}: B \in \mathcal{B}\}$ where $\widetilde{B}=\{[A] \in \mathcal{P}(X)/\sim \text{ such that } A \in \widetilde{B}_0\}$.
\end{defn}
It is easy to see that $E: \mathfrak{T} \rightarrow \mathfrak{T}$ is a functor and that if $(X,\mathcal{B})\in\mathfrak{T}$ then there is a morphism $\phi_{(X,\mathcal{B})}: (X,\mathcal{B}) \rightarrow E(X,\mathcal{B})$ defined by $\phi_{(X,\mathcal{B})}(x)=[\{x\}]$, the equivalence class of the singleton $\{x\}$. For any $(X,\mathcal{B})$, this map $\phi_{(X,\mathcal{B})}$ is an embedding whose image is dense in $E(X,\mathcal{B})\setminus\{[\varnothing]\}$, and the $\phi_{(X,\mathcal{B})}$ are the components of a natural transformation $\phi: id \rightarrow E$ of functors $\mathfrak{T} \rightarrow \mathfrak{T}$.

The following proposition explains the reason for defining $E$.
\begin{prop}\label{ncspec=expspec}
Let $R$ be a commutative ring. If $(X,\mathcal{B})=(\mathrm{Spec}(R),\{D(f): f \in R\})$ then $E(X,\mathcal{B})$ is naturally isomorphic to $(\NCSpec(R), \{\widetilde{U_f}: f \in R\})$.
\end{prop}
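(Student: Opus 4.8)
The plan is to construct an explicit isomorphism in the category $\mathfrak{T}$ between $E(\mathrm{Spec}(R),\{D(f)\})$ and $(\NCSpec(R),\{\widetilde{U_f}\})$, by matching up points with points and basic open sets with basic open sets. The key observation, already established in Proposition \ref{unionofprimes}, is that the points of $\NCSpec(R)$ are in bijection with unions $\bigcup_\lambda P_\lambda$ of prime ideals of $R$ (including the empty union, corresponding to $R_0$ and the generic point $\gamma$), and under this bijection the basic open set $\widetilde{U_f}$ consists of those unions $\bigcup_\lambda P_\lambda$ with $f \notin P_\lambda$ for all $\lambda$. On the other side, a point of $E(X,\mathcal{B})$ is an equivalence class $[A]$ of subsets $A \subset \mathrm{Spec}(R)$, where $A \sim A'$ iff $A$ and $A'$ lie in the same basic opens $\widetilde{D(f)}_0$, i.e. iff $\{f : A \subseteq D(f)\} = \{f : A' \subseteq D(f)\}$. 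Since $A \subseteq D(f)$ means $f \notin P$ for every $P \in A$, this says $A \sim A'$ iff $\bigcap_{P \in A} P = \bigcap_{P \in A'} P$ (as radical ideals; with the convention that the intersection over the empty set is $R$). So the assignment $[A] \mapsto \bigcup_{P \in A} P$ is a well-defined bijection from the points of $E(X,\mathcal{B})$ to the points of $\NCSpec(R)$ — here one uses that every union of primes equals the union of \emph{all} primes not meeting a suitable multiplicative set, and that $\bigcup_{P\in A}P$ is recovered from the radical ideal $\bigcap_{P\in A}P$ via Kaplansky's theorem (\cite[1.1, Theorem 2]{Kaplansky}).

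First I would write down this bijection $\psi$ on points carefully and check it is well-defined and injective using the characterization of $\sim$ above together with the injectivity statement proved inside Proposition \ref{unionofprimes}. Next I would check it carries basic opens to basic opens correctly: $\psi(\widetilde{D(f)}) = \widetilde{U_f}$, because $[A] \in \widetilde{D(f)}$ iff $A \subseteq D(f)$ iff $f \notin P$ for all $P \in A$ iff $f \notin \bigcup_{P \in A} P$ iff $\psi([A]) \in \widetilde{U_f}$. Since in both spaces the topology is generated by these basic opens and both bases are multiplicative (closed under finite intersection, with the whole space included — corresponding to $f = 1_R$), this shows $\psi$ and $\psi^{-1}$ are morphisms in $\mathfrak{T}$, hence $\psi$ is an isomorphism in $\mathfrak{T}$. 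At this point the topological/based part is done; the statement of Proposition \ref{ncspec=expspec} as written only asserts an isomorphism of objects of $\mathfrak{T}$, so no structure sheaf needs to be carried across, though one could remark that under $\psi$ the sections $loc(R,f)$ on $\widetilde{U_f}$ match the natural rings one would attach to $\widetilde{D(f)}$.

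Finally I would address naturality: I would observe that for a homomorphism $\theta : R \to S$ of commutative rings the induced map $\Spec(S) \to \Spec(R)$ is a morphism in $\mathfrak{T}$ (preimages of $D(f)$ are $D(\theta f)$), hence $E$ applies to it, and I would check that the square relating $E$ applied to $\Spec(\theta)$ with the map $\hat\theta : \NCSpec(S) \to \NCSpec(R)$ of Theorem \ref{maintheorem} commutes, using $\hat\theta^{-1}(\widetilde{U_E}) = \widetilde{U_{\theta(E)}}$ and the description of $\hat\theta$ on points from the proof of Theorem \ref{maintheorem2}. The main obstacle I anticipate is purely bookkeeping: being careful about the empty subset / empty union of primes (which must map to the generic point $\gamma = R_0$) and about the distinction between a union of primes and the corresponding radical ideal or saturated multiplicative set, so that the bijection on points is genuinely well-defined and inverse to the map of Proposition \ref{unionofprimes}. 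None of this is deep, but the three layers of reformulation — subsets of $\Spec(R)$ modulo $\sim$, saturated multiplicative sets, unions of primes — need to be lined up precisely.
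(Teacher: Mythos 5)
Your overall strategy is exactly the paper's: the paper also defines the map on points by $[A] \mapsto \bigcup_{P \in A} P$, checks that preimages of the basic opens $\widetilde{U_f}$ are the basic opens $\widetilde{D(f)}$, and leaves naturality as a routine verification. However, your justification of well-definedness and injectivity contains a genuine error. You assert that $A \sim A'$ if and only if $\bigcap_{P \in A} P = \bigcap_{P \in A'} P$ as radical ideals, and that $\bigcup_{P \in A} P$ can be recovered from this intersection via Kaplansky's theorem. Neither claim is true. Since $A \subseteq D(f)$ exactly when $f \notin \bigcup_{P \in A} P$, the set $\{f : A \subseteq D(f)\}$ is the complement of the \emph{union} $\bigcup_{P \in A} P$, so $A \sim A'$ if and only if the unions coincide; the intersection neither determines nor is determined by the union. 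Concretely, in $R=\mathbb{C}[x,y]$ take $A=\{(x),(x,y)\}$ and $A'=\{(x)\}$: then $\bigcap_{P \in A} P = (x) = \bigcap_{P \in A'} P$, yet $A' \subseteq D(y)$ while $A \not\subseteq D(y)$, so $A \not\sim A'$ and the unions $(x,y)$ and $(x)$ differ. With your stated characterization of $\sim$, the well-definedness check for $[A] \mapsto \bigcup_{P \in A} P$ would actually fail: the classes of $A$ and $A'$ would be declared equal while their images are distinct. Kaplansky's theorem likewise gives no way to pass from $\bigcap_{P\in A}P$ to $\bigcup_{P\in A}P$; it only says that saturated multiplicative sets are complements of unions of primes.

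The repair is immediate and is what the paper does: $[A]=[A']$ means $A$ and $A'$ lie in the same basic opens, i.e.\ for every $f$ one has $f \notin \bigcup_{P \in A} P$ if and only if $f \notin \bigcup_{P \in A'} P$, which is literally the statement $\bigcup_{P \in A} P = \bigcup_{P \in A'} P$. This gives well-definedness and injectivity of $[A] \mapsto \bigcup_{P\in A}P$ in one line, with no appeal to Kaplansky; Kaplansky enters only through Proposition \ref{unionofprimes}, which identifies the points of $\NCSpec(R)$ with arbitrary unions of primes and hence gives surjectivity. Your computation that $[A] \in \widetilde{D(f)}$ iff $f \notin \bigcup_{P\in A}P$ iff the image lies in $\widetilde{U_f}$ is correct (and, note, already encodes the correct description of $\sim$), as is your observation that the proposition only asserts an isomorphism in $\mathfrak{T}$ and your sketch of naturality via $\hat{\theta}^{-1}(\widetilde{U_E})=\widetilde{U_{\theta(E)}}$; all of this matches the paper. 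So the proof goes through once the erroneous reformulation in terms of $\bigcap_{P\in A}P$ is deleted.
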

\begin{proof}
We have shown in Proposition \ref{unionofprimes} that the points of $\NCSpec(R)$ correspond to unions of prime ideals of $R$, and that $\widetilde{U_f}$ may be identified with $\{\bigcup_\lambda P_\lambda: f \notin \bigcup_\lambda P_\lambda\}$. We define a map $\gamma: E(X,\mathcal{B}) \rightarrow (\NCSpec(R), \{\widetilde{U_f}: f \in R\})$ by $\gamma : [A] \mapsto \bigcup_{P \in A} P$. The map $\gamma$ is well-defined because if $[A]=[A']$ then for all $f \in R$, if $f \notin \bigcup_{P \in A}P$ then $P \in D(f)$ for all $P \in A$, so $A \subset D(f)$ and therefore $A' \subset D(f)$ which implies $f \notin \bigcup_{P\in A'}P$. Therefore, $\bigcup_{P \in A'}P \subset\bigcup_{P\in A}P$, and the reverse inclusion is proved in the same way. Also, the map $\gamma$ is clearly surjective, and a similar argument to the above shows that $\gamma$ is injective. Also, for all $f \in R$, $\gamma^{-1}(\widetilde{U_f})=\widetilde{D(f)}$. Therefore, $\gamma$ is an isomorphism in $\mathfrak{T}$. It is straightforward to show that the ismorphisms $\gamma$ are natural with respect to $R$.
\end{proof}
Note that in the case $(X,\mathcal{B})=(\mathrm{Spec}(R), \{D(f)\})$, the map $\phi_{(X,\mathcal{B})}$ coincides with the map $\phi$ defined in Proposition \ref{phicts}.

We now wish to describe the functor $E$ via a universal property. Recall the notion of \emph{join-semlilattice} from Definition \ref{jsem}.
\begin{defn}
A join-semilattice $(X,\le)$ is called \emph{complete} if every subset $A$ of $X$ has a least upper bound, denoted $\mathrm{sup}(A)$. A morphism of join-semilattices is a (necessarily order-preserving) function $f: (X,\le_X)\rightarrow (Y,\le_Y)$ such that $f(\mathrm{sup}(A))=\mathrm{sup}(f(A))$ for all subsets $A$ of $X$.
\end{defn}
\begin{defn}
A $\mathfrak{T}$--complete join-semilattice is a triple $(X,\mathcal{B},\le)$ such that
\begin{itemize}
\item $(X,\mathcal{B})$ is an object of $\mathfrak{T}$.
\item $\le$ is a partial order on $X$ such that $(X,\le)$ is a complete join-semilattice.
\item The following property holds:

For all $A \subset X$ and all $B \in \mathcal{B}$, $A \subset B$ if and only if $\mathrm{sup}(A) \in B$.
\end{itemize}
\end{defn}
Observe that if $(X,\mathcal{B}) \in \mathfrak{T}$ then $E(X,\mathcal{B})$ is a complete join-semilattice under the order $$[A]\le [A'] \iff (\forall B \in \mathcal{B}, A \subset B \implies A' \subset B).$$ The reader can check that this order is well-defined and that the join of $\{[A_\lambda]: \lambda \in \Lambda\}$ is $\left[\bigcup_\lambda A_\lambda\right]$.
\begin{lem}\label{tcjsfull}
If $(X,\mathcal{B}, \le_X)$ and $(Y, \mathcal{C}, \le_Y)$ are $\mathfrak{T}$--complete join-semilattices and $f: (X,\mathcal{B}) \rightarrow (Y, \mathcal{C})$ is a morhpism in $\mathfrak{T}$, then $f$ is a morphism of complete join-semilattices.
\end{lem}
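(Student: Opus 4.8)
The plan is to prove directly that $f(\mathrm{sup}(A)) = \mathrm{sup}(f(A))$ for every subset $A \subset X$, exploiting the fact that in a $T_0$ space whose topology has a distinguished base, a point is determined by the collection of basic open sets containing it. Concretely, since $(Y,\mathcal{C})$ is an object of $\mathfrak{T}$, the space $Y$ is $T_0$ and $\mathcal{C}$ is a base, so if $y, y' \in Y$ lie in exactly the same members of $\mathcal{C}$ then $y = y'$ (if $y \neq y'$, $T_0$-ness gives an open set containing one but not the other, and shrinking it inside $\mathcal{C}$ gives a member of $\mathcal{C}$ separating them). Note also that $\mathrm{sup}(f(A))$ exists because $(Y, \le_Y)$ is a complete join-semilattice. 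Hence it suffices to show that for every $C \in \mathcal{C}$,
$$
f(\mathrm{sup}(A)) \in C \iff \mathrm{sup}(f(A)) \in C.
$$

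The first step is to rewrite the left-hand condition. We have $f(\mathrm{sup}(A)) \in C$ if and only if $\mathrm{sup}(A) \in f^{-1}(C)$. Since $f$ is a morphism in $\mathfrak{T}$, $f^{-1}(C) \in \mathcal{B}$, so the defining property of the $\mathfrak{T}$--complete join-semilattice $(X,\mathcal{B},\le_X)$, applied to the subset $A \subset X$ and the basic open set $f^{-1}(C)$, yields $\mathrm{sup}(A) \in f^{-1}(C) \iff A \subset f^{-1}(C)$, which is visibly equivalent to $f(A) \subset C$.

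The second step is to rewrite the right-hand condition. Applying the defining property of the $\mathfrak{T}$--complete join-semilattice $(Y,\mathcal{C},\le_Y)$ to the subset $f(A) \subset Y$ and the basic open set $C \in \mathcal{C}$ gives $\mathrm{sup}(f(A)) \in C \iff f(A) \subset C$. Combining the two steps, both $f(\mathrm{sup}(A)) \in C$ and $\mathrm{sup}(f(A)) \in C$ are equivalent to $f(A) \subset C$, hence to each other. Thus $f(\mathrm{sup}(A))$ and $\mathrm{sup}(f(A))$ lie in the same members of $\mathcal{C}$, and by $T_0$-ness they are equal. As $A$ was arbitrary, $f$ preserves arbitrary suprema, so it is a morphism of complete join-semilattices.

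I do not expect a genuine obstacle here: the argument is a direct unwinding of definitions. The one point requiring a little care is the preliminary observation that, in a $T_0$ space equipped with a base $\mathcal{C}$, points are separated already by members of $\mathcal{C}$; everything else is just the fact that the defining property of a $\mathfrak{T}$--complete join-semilattice is precisely what converts the set-theoretic containment statements on either side into the same condition $f(A) \subset C$.
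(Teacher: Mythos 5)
Your proof is correct and is essentially the paper's own argument: the same chain of equivalences $f(\mathrm{sup}(A)) \in C \iff \mathrm{sup}(A)\in f^{-1}(C) \iff A \subset f^{-1}(C) \iff f(A)\subset C \iff \mathrm{sup}(f(A))\in C$, concluded by $T_0$-separation. The only difference is that you spell out the (correct) observation that $T_0$-ness together with $\mathcal{C}$ being a base lets one test equality of points against members of $\mathcal{C}$ alone, which the paper leaves implicit.
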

\begin{proof}
Let $A \subset X$. We must show that $\mathrm{sup}(f(A))=f(\mathrm{sup}(A))$. Since $Y$ is $T_0$, it suffices to show that for all $C \in \mathcal{C}$, $\mathrm{sup}(f(A)) \in C$ if and only if $f(\mathrm{sup}(A))\in C$. Let $C \in \mathcal{C}$. Then $f(\mathrm{sup}(A))\in C$ if and only if $\mathrm{sup}(A) \in f^{-1}(C)$ if and only if $A \subset f^{-1}(C)$ if and only if $f(A) \subset C$ if and only if $\mathrm{sup}(f(A)) \in C$, as required.
\end{proof}
\begin{prop}\label{univprop}
The map $\phi_{(X,\mathcal{B})} : (X,\mathcal{B}) \rightarrow E(X,\mathcal{B})$ is the universal map from $(X,\mathcal{B})$ to a $\mathfrak{T}$--complete join-semilattice, in the following sense:

$\phi_{(X,\mathcal{B})} : (X,\mathcal{B}) \rightarrow E(X,\mathcal{B})$ is a map from from $(X,\mathcal{B})$ to a $\mathfrak{T}$--complete join-semilattice, and if $\theta: (X,\mathcal{B}) \rightarrow (Y,\mathcal{C},\le)$ is any map from $(X,\mathcal{B})$ to a $\mathfrak{T}$--complete join-semilattice, then there exists a unique map of $\mathfrak{T}$--complete join-semilattices $\widehat{\theta} : E(X,\mathcal{B}) \rightarrow (Y,\mathcal{C},\le)$ such that the following diagram commutes.
$$
\xymatrix{
(X,\mathcal{B}) \ar[rr]^\theta \ar[d]_{\phi_{(X,\mathcal{B})}} &&(Y,\mathcal{C}, \le)\\
E(X,\mathcal{B}) \ar[rru]_{\widehat{\theta}}
}
$$
\end{prop}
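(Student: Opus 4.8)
The plan is to verify the universal property directly by unwinding the definitions. First I would note that $\phi_{(X,\mathcal{B})}: (X,\mathcal{B}) \to E(X,\mathcal{B})$ is indeed a map to a $\mathfrak{T}$--complete join-semilattice: we have already observed that $E(X,\mathcal{B})$ carries the order $[A] \le [A'] \iff (\forall B \in \mathcal{B},\ A \subset B \implies A' \subset B)$, that it is a complete join-semilattice with $\mathrm{sup}\{[A_\lambda]\} = [\bigcup_\lambda A_\lambda]$, and that the compatibility condition ``$A \subset \widetilde{B} \iff \mathrm{sup}(A) \in \widetilde{B}$'' holds by construction of the base $\widetilde{\mathcal{B}}$. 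That $\phi_{(X,\mathcal{B})}$ is a morphism in $\mathfrak{T}$ was recorded just before Proposition \ref{ncspec=expspec}.

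Now suppose $\theta: (X,\mathcal{B}) \to (Y,\mathcal{C},\le)$ is a map to a $\mathfrak{T}$--complete join-semilattice. The only possible definition of $\widehat{\theta}$ is forced: since $\widehat{\theta}$ must preserve suprema and $[A] = \mathrm{sup}\{[\{x\}] : x \in A\} = \mathrm{sup}\{\phi_{(X,\mathcal{B})}(x) : x \in A\}$, we are compelled to set
$$\widehat{\theta}([A]) = \mathrm{sup}\{\theta(x) : x \in A\} = \mathrm{sup}\bigl(\theta(A)\bigr),$$
with the convention $\widehat{\theta}([\varnothing]) = \mathrm{sup}(\varnothing)$, the least element of $Y$. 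I would first check this is well-defined on $\sim$-classes: if $[A] = [A']$ in $E(X,\mathcal{B})$ then $A$ and $A'$ lie in exactly the same basic opens $\widetilde{B}_0$, i.e.\ $A \subset B \iff A' \subset B$ for all $B \in \mathcal{B}$; applying $\theta^{-1}$ and using that $\theta^{-1}(C) \in \mathcal{B}$ for $C \in \mathcal{C}$, one gets $\theta(A) \subset C \iff \theta(A') \subset C$ for all $C \in \mathcal{C}$, hence by the compatibility axiom for $(Y,\mathcal{C},\le)$ that $\mathrm{sup}(\theta(A)) \in C \iff \mathrm{sup}(\theta(A')) \in C$; since $Y$ is $T_0$ this forces $\mathrm{sup}(\theta(A)) = \mathrm{sup}(\theta(A'))$.

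Next I would verify that $\widehat{\theta}$ is a morphism in $\mathfrak{T}$, i.e.\ that $\widehat{\theta}^{-1}(C) \in \widetilde{\mathcal{B}}$ for every $C \in \mathcal{C}$; in fact $\widehat{\theta}^{-1}(C) = \widetilde{\theta^{-1}(C)}$, since $[A] \in \widehat{\theta}^{-1}(C) \iff \mathrm{sup}(\theta(A)) \in C \iff \theta(A) \subset C \iff A \subset \theta^{-1}(C) \iff [A] \in \widetilde{\theta^{-1}(C)}$, using the compatibility axiom for $Y$ in the second step; and $\theta^{-1}(C) \in \mathcal{B}$, so $\widetilde{\theta^{-1}(C)} \in \widetilde{\mathcal{B}}$. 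Then Lemma \ref{tcjsfull} upgrades $\widehat{\theta}$ automatically to a morphism of complete join-semilattices, so no separate supremum-preservation argument is needed. Commutativity $\widehat{\theta} \circ \phi_{(X,\mathcal{B})} = \theta$ is immediate: $\widehat{\theta}([\{x\}]) = \mathrm{sup}(\theta(\{x\})) = \theta(x)$. Finally, uniqueness follows because any morphism $\psi: E(X,\mathcal{B}) \to (Y,\mathcal{C},\le)$ of $\mathfrak{T}$--complete join-semilattices with $\psi \circ \phi_{(X,\mathcal{B})} = \theta$ must satisfy $\psi([A]) = \psi(\mathrm{sup}\{[\{x\}]:x\in A\}) = \mathrm{sup}\{\psi([\{x\}]):x \in A\} = \mathrm{sup}(\theta(A)) = \widehat{\theta}([A])$.

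I expect the main point requiring care is the well-definedness of $\widehat{\theta}$ on equivalence classes, together with the clean identity $\widehat{\theta}^{-1}(C) = \widetilde{\theta^{-1}(C)}$: both rely on using the compatibility axiom of a $\mathfrak{T}$--complete join-semilattice in exactly the right direction, and on the $T_0$ hypothesis to pass from ``same basic opens'' to ``equal''. Everything else is a routine unwinding of definitions.
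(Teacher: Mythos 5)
Your proposal is correct and follows essentially the same route as the paper's proof: define $\widehat{\theta}([A])=\mathrm{sup}(\theta(A))$, check it is the unique morphism in $\mathfrak{T}$ making the diagram commute, and invoke Lemma \ref{tcjsfull} to upgrade it to a morphism of $\mathfrak{T}$--complete join-semilattices. The paper labels these verifications ``routine''; you have simply written them out (well-definedness via the compatibility axiom and the $T_0$ hypothesis, and the identity $\widehat{\theta}^{-1}(C)=\widetilde{\theta^{-1}(C)}$), and they are correct.
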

\begin{proof}
The map $\widehat{\theta}$ is defined by $\widehat{\theta}([A])= \mathrm{sup}(\theta(A))$. It is routine to check that $\widehat{\theta}$ is the unique morphism in $\mathfrak{T}$ which makes the diagram commute. Lemma \ref{tcjsfull} then implies that $\widehat{\theta}$ is a morphism of $\mathfrak{T}$--complete join-semilattices.
\end{proof}
The universal property of Proposition \ref{univprop} makes it easy to prove that $E^2=E$.
\begin{prop}
$E^2=E$ as functors $\mathfrak{T}\rightarrow \mathfrak{T}$.
\end{prop}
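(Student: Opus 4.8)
The plan is to read off $E^2=E$ from the universal property established in Proposition~\ref{univprop}, which exhibits $\phi_{(X,\mathcal{B})}\colon(X,\mathcal{B})\to E(X,\mathcal{B})$ as the universal arrow to a $\mathfrak{T}$--complete join-semilattice; as elsewhere in this section, ``$E^2=E$'' is to be read up to natural isomorphism. The first thing I would record is that, for every $(X,\mathcal{B})\in\mathfrak{T}$, the object $E(X,\mathcal{B})$ is itself a $\mathfrak{T}$--complete join-semilattice: it is already a complete join-semilattice under the order $[A]\le[A']$ described above, with $\mathrm{sup}\{[A_\lambda]\}=\bigl[\bigcup_\lambda A_\lambda\bigr]$, and the remaining compatibility axiom---that $A\subset\widetilde{B}$ if and only if $\mathrm{sup}(A)\in\widetilde{B}$ for a family $A$ of points and $\widetilde{B}\in\widetilde{\mathcal{B}}$---follows immediately from the defining equivalence $[C]\in\widetilde{B}\iff C\subset B$ by passing to unions.

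Next I would establish the key lemma: if $(X,\mathcal{B},\le)$ is a $\mathfrak{T}$--complete join-semilattice to begin with, then $\phi_{(X,\mathcal{B})}$ is an isomorphism in $\mathfrak{T}$. Applying Proposition~\ref{univprop} to the identity map $(X,\mathcal{B})\to(X,\mathcal{B},\le)$ produces a morphism $\psi\colon E(X,\mathcal{B})\to(X,\mathcal{B})$ with $\psi\circ\phi_{(X,\mathcal{B})}=\mathrm{id}$. For the reverse composite I would observe that both $\phi_{(X,\mathcal{B})}\circ\psi$ and $\mathrm{id}_{E(X,\mathcal{B})}$ are morphisms of $\mathfrak{T}$--complete join-semilattices from $E(X,\mathcal{B})$ to itself---here one uses the first step together with Lemma~\ref{tcjsfull} to see that $\phi_{(X,\mathcal{B})}$ is itself such a morphism---and that both restrict to $\phi_{(X,\mathcal{B})}$ after precomposition with $\phi_{(X,\mathcal{B})}$; the uniqueness clause of Proposition~\ref{univprop} then forces them to agree, so $\phi_{(X,\mathcal{B})}$ has a two-sided inverse.

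Finally I would combine the two steps: for an arbitrary $(X,\mathcal{B})$, the object $E(X,\mathcal{B})$ is a $\mathfrak{T}$--complete join-semilattice by the first step, so the lemma applies to it and shows that $\phi_{E(X,\mathcal{B})}\colon E(X,\mathcal{B})\to E^2(X,\mathcal{B})$ is an isomorphism in $\mathfrak{T}$. Since $\phi\colon\mathrm{id}\to E$ is a natural transformation, its components at objects of the form $E(X,\mathcal{B})$ assemble into a natural transformation $E\to E^2$, which by the above is a natural isomorphism; this is the desired identity of functors $\mathfrak{T}\to\mathfrak{T}$. I do not anticipate a real obstacle: the only mildly tedious point is the first step's verification that $E(X,\mathcal{B})$ meets all three conditions in the definition of a $\mathfrak{T}$--complete join-semilattice (in particular that $\widetilde{\mathcal{B}}$ really presents the quotient topology on $\mathcal{P}(X)/\!\sim$), and that is routine unwinding of definitions; all the conceptual weight sits in the lemma of the second step.
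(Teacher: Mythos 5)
Your proposal is correct and is essentially the paper's argument: the paper proves this proposition by declaring it immediate from Proposition \ref{univprop} together with Lemma \ref{tcjsfull}, and your three steps (noting $E(X,\mathcal{B})$ is itself a $\mathfrak{T}$--complete join-semilattice, deducing from the universal property that $\phi$ is an isomorphism on such objects, and applying this to $\phi_{E(X,\mathcal{B})}$) are exactly the expansion of that one-line proof, including the correct reading of $E^2=E$ up to natural isomorphism.
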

\begin{proof}
Immediate from Proposition \ref{univprop} combined with Lemma \ref{tcjsfull}.
\end{proof}
We end this section with the following remark. We can define a category $\mathfrak{T}$--{\bf RingedSp} whose objects are objects of $\mathfrak{T}$ equipped with a sheaf of rings, together with the obvious morphisms. We can define a functor $\NCSpec_B: \mathbf{Rings}^{op}\rightarrow \mathfrak{T}-\mathbf{RingedSp}$ by $\NCSpec_B(R)=(\NCSpec(R), \{\widetilde{U_E} : E \in L_0(R)\})$. If $\mathbf{CommRings}$ denotes the category of commutative rings, then we can define $\mathrm{Spec}_B : \mathbf{CommRings}^{op} \rightarrow \mathfrak{T}-\mathbf{RingedSp}$ by $\mathrm{Spec}_B(R)=(\mathrm{Spec}(R), \{D(f): f \in R\})$. Finally, we can define $E: \mathfrak{T}-\mathbf{RingedSp} \rightarrow \mathfrak{T}-\mathbf{RingedSp}$ by extending the definition of $E$ to ringed spaces. These functors fit together in the following way.
\begin{prop}\label{ersatz}
The following diagram of categories and functors commutes.
$$
\xymatrix{
\mathbf{Rings}^{op} \ar[rr]^{\NCSpec_B} && \mathfrak{T}-\mathbf{RingedSp}\\
&&\\
\mathbf{CommRings}^{op} \ar[uu] \ar[rr]^{\Spec_B} && \mathfrak{T}-\mathbf{RingedSp} \ar[uu]_{E}
}
$$
\end{prop}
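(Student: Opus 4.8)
The plan is to verify commutativity of the diagram on objects and on morphisms, both of which reduce to statements already established. On objects, starting from a commutative ring $R$, the bottom-then-right path yields $E(\mathrm{Spec}_B(R)) = E(\mathrm{Spec}(R), \{D(f): f \in R\})$ together with its sheaf of rings, while the top-then-nothing path yields $\NCSpec_B(R) = (\NCSpec(R), \{\widetilde{U_f}: f \in R\})$ with its sheaf $\mathcal{O}$. Proposition \ref{ncspec=expspec} already supplies a natural isomorphism $\gamma$ in $\mathfrak{T}$ between the underlying based spaces, with $\gamma^{-1}(\widetilde{U_f}) = \widetilde{D(f)}$. So the only new content on objects is to check that $\gamma$ intertwines the two sheaves of rings: on the basic open set $\widetilde{D(f)}$, the sheaf carried over from $\mathrm{Spec}(R)$ assigns $\mathcal{O}_{\Spec(R)}(D(f)) = loc(R,f)$, while $\mathcal{O}_{\NCSpec(R)}(\widetilde{U_f}) = loc(R,f)$ as well, and both restriction maps are the canonical localization maps $p_{FG}$; these agree, so the isomorphism $\phi^\#$ of Theorem \ref{commutative} (constructed via Lemma \ref{vakil2}) identifies the two sheaves compatibly with $\gamma$.

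Next I would check commutativity on morphisms. Given a homomorphism $\theta: R \to S$ of commutative rings, the right-then-up composite is $E$ applied to the morphism $(\phi_S, \phi_S^\#) \circ \Spec_B(\theta)$, i.e. $E$ of the usual pullback map $\Spec(S) \to \Spec(R)$ followed by the embeddings into the exponentials; the up-then-top composite is $\NCSpec_B(\theta) = (\hat\theta, \hat\theta^\#)$. On underlying based spaces, the discussion immediately after Theorem \ref{commutative} already notes that $\hat\theta$ restricted to $\Spec(S) \subset \NCSpec(S)$ is the ordinary pullback map, i.e. $\phi$ is a natural transformation $\Spec \to \NCSpec$. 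Combined with the universal property of Proposition \ref{univprop} (which says $E(f)$ is the unique extension of $f$ to a morphism of $\mathfrak{T}$-complete join-semilattices), this forces the two maps $E(\Spec(S)) \to E(\Spec(R))$ to coincide: both are morphisms of $\mathfrak{T}$-complete join-semilattices restricting along $\phi_R$ to the composite $\phi_R \circ (\text{pullback})$, so by uniqueness they are equal, and under the identification $\gamma$ this is exactly $\hat\theta$. The sheaf part follows because on each basic open set $\widetilde{U_E}$ both morphisms induce the localization map $\theta_E: loc(R,E) \to loc(S, \theta(E))$ of Proposition \ref{thetaa}, which is uniquely determined, and by Lemma \ref{vakil2} a morphism of sheaves agreeing on a base is unique.

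I expect the main obstacle to be bookkeeping rather than a genuine difficulty: one must be careful that all four functors land in $\mathfrak{T}$-$\mathbf{RingedSp}$ with compatible multiplicative bases, and that the extension of $E$ to ringed spaces (only sketched in the excerpt) really does carry the structure sheaf of $(\mathrm{Spec}(R), \{D(f)\})$ to the structure sheaf of $\NCSpec(R)$ under $\gamma$ — this is where one invokes that both sheaves are determined by their values $loc(R,f)$ on the distinguished base together with the canonical restriction maps, so they are canonically identified. A secondary subtlety is the use of Proposition \ref{unionofprimes} to ensure that "union of prime ideals" descriptions of points match up across $\gamma$; but since that correspondence was already proved, the present proposition is genuinely a formal consequence. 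Accordingly the proof is short: invoke Propositions \ref{ncspec=expspec}, \ref{univprop}, \ref{phicts}, Theorem \ref{commutative}, and the naturality remarks, and observe that naturality on objects plus naturality of $\phi$ plus the universal property of $E$ pin down commutativity on morphisms.
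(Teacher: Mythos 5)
Your proof is correct, and since the paper states Proposition \ref{ersatz} with no proof at all (it is presented as a formal consequence of Proposition \ref{ncspec=expspec}, the naturality of $\phi$, and the extension of $E$ to ringed spaces), your argument supplies exactly the details the paper leaves implicit: the identification of the two sheaves via their common values $loc(R,f)$ on the distinguished bases, and the uniqueness arguments (Proposition \ref{univprop} together with Lemma \ref{tcjsfull}, Proposition \ref{epi} and Lemma \ref{vakil2}) that pin down agreement on morphisms. The only caveat worth recording is that the diagram commutes up to the natural isomorphism $\gamma$ of Proposition \ref{ncspec=expspec} rather than on the nose, a point the paper glosses over and which you at least make explicit by carrying out all comparisons under that identification.
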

Proposition \ref{ersatz} is interesting because it is the closest we can get to the ideal situation of a functor $F: \mathbf{Rings}^{op} \rightarrow \mathbf{RingedSp}$ with the property that $F \cong \mathrm{Spec}$ when restricted to commutative rings.
\section{Examples}\label{examplessn}
In this section, we give some examples of $\NCSpec(R)$ for various rings $R$.
\begin{example}{\bf (The zero ring.)}\begin{rm}
Let $R=0$. According to our definitions, $L(R)$ consists of a single point, and therefore $\NCSpec(R)$ is the one-point space $\{\bullet\}$. This has a sheaf of rings whose sections over the two open sets $\varnothing$ and $\{\bullet\}$ are zero. By Theorem \ref{maintheorem}, for any $R$ there is a morphism $\NCSpec(0) \rightarrow \NCSpec(R)$. This morphism maps $\bullet$ to the generic point of $\NCSpec(R)$.
\end{rm}
\end{example}
\begin{rem}\begin{rm}
The reader may wonder why we do not simply delete the generic points. Indeed, if we defined $\NCSpec^0(R)$ to be the complement of the generic point in $\NCSpec(R)$, we would still obtain a faithful functor $\NCSpec^0: \Rings^{op}\rightarrow \mathbf{RingedSp}$. However, this definition would make it more awkward to write out some of the proofs of the main results of this paper, and would bring little benefit.
\end{rm}
\end{rem}
\begin{example}{\bf (Matrices.)}\label{matrix}\begin{rm}
Let $R=M_n(\mathbb{C})$, the ring of $n \times n$ matrices with complex entries. It is clear that if $E \subset R$ is a finite set of nonsingular matrices, then $loc(R,E)=R$. If $E \subset R$ is a finite set of matrices, at least one of which is singular, we claim that $loc(R,E)=0$. To see this, let $A \in E$ be a singular matrix and let $B$ be a matrix of rank $1$ with $\mathrm{image}(B) \subset \ker(A)$, and let $t \in loc(R,E)$ be the inverse of $\alpha_E(A)$. Then $ \alpha_E(B)=t\alpha_E(A)\alpha_E(B) = t \alpha_E(AB) =0$. But for each $ 1\le i \le n$, there is a nonsingular matrix $Z$ with $ZBZ^{-1}=E_{ii}$ where $E_{ii}$ is the matrix with zeroes everywhere except for a $1$ in the $(i,i)$--position. So $\alpha_E(E_{ii})=0$ for each $i$, and therefore $\alpha_E(1)=\alpha_E(\sum_i E_{ii})=0$. So $loc(R,E)=0$ as claimed.

Thus, the space $L(R)$ has only two elements, and so $S(L(R))$ has two points; the generic point $\gamma$ and one other point $p$ such that $\{p\}$ is closed. The nonempty open sets are $\{\gamma,p\}$ and $\{\gamma\}$, and the structure sheaf $\mathcal{O}$ has $\mathcal{O}(\{\gamma,p\})=R$, $\mathcal{O}(\{\gamma\})=0$ with the zero map as the restriction map.
\end{rm}
\end{example}
\begin{example}{\bf (Semisimple algebras.)}\label{ssa}\begin{rm}
Now let $A=M_{n_1}(\mathbb{C}) \times M_{n_2}(\mathbb{C})\times \cdots \times M_{n_k}(\mathbb{C})$ be a semisimple $\mathbb{C}$--algebra. For $1 \le i \le k$, write $e_i =(0,0, \ldots, 1, \ldots, 0)$, so that $\{e_i\}$ is a set of pairwise orthogonal central idempotents. For $I \subset \{1,2, \ldots, k\}$, write $A_I= A(\sum_{i \in I} e_i)$. Then $\sum_{i \in I} e_i$ is the identity element of $A_I$, and one can check that $A_I = loc(A, \sum_{i \in I} e_i)$ with the localisation map $A \rightarrow A_I$ being given by $a \mapsto a(\sum_{i \in I} e_i)$. We claim that every $loc(R,E)$ is canonically isomorphic to one of the $A_I$. To show this, we use the fact that for any ring $R$, if $x,y \in R$ and $xy=yx$ then $xy$ is a unit if and only if both $x$ and $y$ are units. If $E$ is a finite subset of $R$, suppose $\sum a_i e_i \in E$. Then $\alpha_E(\sum a_i e_i)$ is a unit in $loc(R,E)$. If $\det(a_i)=0$ then there exists $b_i \neq 0$ with $a_i b_i=0$ and so by the same argument as was used in Example \ref{matrix}, we obtain $\alpha_E(e_i)=0$. Thus, $\alpha_E(\sum a_ie_i)=\alpha_E(\sum_{\det(a_i)\neq 0}a_ie_i)$. Now using the fact that $(\sum_{\det(a_i)\neq 0} a_i^{-1} e_i) (\sum_{\det(a_i)\neq 0} a_i e_i)=(\sum_{\det(a_i)\neq 0} a_i e_i)(\sum_{\det(a_i)\neq 0} a_i^{-1} e_i)=\sum_{\det(a_i) \neq 0} e_i$, we obtain that $\alpha_E(\sum a_i e_i)$ is a unit if and only if $\alpha_E(\sum_{\det(a_i) \neq 0} e_i)$ is a unit. Finally, we obtain that $\alpha_E(e)$ is a unit for all $e \in E$ if and only if $\alpha_E(\sum_{i \in Z} e_i)$ is a unit, where
$$Z= \bigcap_{e \in E} \{ j: e=\sum a_j e_j \text{ with } \det(a_j) \neq 0\}.$$
Thus, $loc(R,E)=A_Z$. So the localization semilattice is isomorphic to the lattice of finite subsets of $\{1,2,\ldots, k\}$. From this it follows that the points of $\NCSpec(R)$ correspond to finite subsets of $\{1,2, \ldots, k\}$, and the basic open sets are of the form $U_I=\{J : J \subset I\}$, with $\mathcal{O}(U_I)=A_I$. If $U_{I_1}\subset U_{I_2}$ then $I_1 \subset I_2$ and the restriction map $A_{I_2} \rightarrow A_{I_1}$ is multiplication by $\sum_{i \in I_1}e_i$. It follows that for a general open $U$, $\mathcal{O}(U)=A_{\bigcup U}$.
\end{rm}
\end{example}
\begin{example}{\bf (Polynomials in one variable.)}\label{poly}\begin{rm}
Finally, let us consider $R=\mathbb{C}[x]$ (or, more generally, any PID).
%We have already seen in Theorem \ref{commutative} that $\Spec(R)$ embeds into $\NCSpec(R)$, but in this case we can explain exactly how.
Recall from Proposition \ref{unionofprimes} and Proposition \ref{ncspec=expspec} that points of $\NCSpec(R)$ correspond to unions $\bigcup_\lambda P_\lambda$ of prime ideals of $R$. Since $R$ is a PID and every nonzero ideal of $R$ is maximal, two such unions $\bigcup_\lambda P_\lambda$ and $\bigcup_\mu Q_\mu$ are equal if and only if
the nonzero ideals occuring in $\{P_\lambda\}$ are the same as those occuring in $\{Q_\mu\}$. The set of points of $\NCSpec(R)$ may therefore be identified with the zero ideal $\{\mathbf{0}\}$ together with the power set of the set of maximal ideals of $R$.

In the case $R=\mathbb{C}[x]$, %the underlying space of $\NCSpec(R)$ is the set of $T_1$ subspaces of $\mathrm{Spec}(R)=\mathbb{C}\cup\{\mathbf{0}\}$ where $\mathbf{0}$ denotes the zero ideal and $\mathbb{C}$ denotes the set of maximal ideals. Since $\mathbf{0}$ is contained in every other prime ideal, the $T_1$ subspaces of $\mathrm{Spec}(R)$ are all the subsets of $\mathbb{C}$ together with the singleton $\{\mathbf{0}\}$. T
the underlying space of $\NCSpec(R)$ is therefore $\mathcal{P}(\mathbb{C})\cup\{\mathbf{0}\}$ where $\mathcal{P}(\mathbb{C})$ denotes the set of all subsets of $\mathbb{C}$. If $f\neq 0$ then the basic open set $\widetilde{D(f)}$ is $\{A: A \subset \mathbb{C} \text{ and } f(a) \neq 0 \text{ for all } a \in A\}\cup\{\mathbf{0}\}$, while if $f=0$ then $\widetilde{D(f)}=\{\varnothing\}$. The generic point is $\varnothing \in \mathcal{P}(\mathbb{C})$.

A similar description also holds for any ring $R$ satisfying the property that if a prime ideal $P$ is contained in a union $\bigcup_\lambda P_\lambda$ of prime ideals, then $P \subset P_\lambda$ for some $\lambda$. Such rings were studied by Reis-Viswanathan \cite{Reis} and Smith \cite{WWSmith}.
%It would be interesting to have a similar topological chracterization of the underlying space of $\NCSpec(R)$ for a general commutative ring $R$. Unfortunately, the question of when two unions of collections of prime ideals are equal seems to be quite subtle.
\end{rm}
\end{example}
\section{Quasicoherent sheaves and glueing}\label{sheavesandglueing}
In algebraic geometry, one reason for defining affine schemes is so that larger spaces, such as projective space, can be built up by glueing together $\Spec(R)$ for various commutative rings $R$. It is natural to ask whether the spaces $\NCSpec(R)$ can also be glued together, and how much of algebraic geometry can be developed in this setting.
\subsection{Glueing}
Recall the notion of glueing ringed spaces. This can be found, for example in
\cite[Exercise 2.12]{Hartshorne} (it is stated there for schemes, but the same definitions can be made for ringed spaces in general).
\begin{prop}\label{glueingringedspaces}
Let $\{(X_\alpha, \mathcal{O}_{X_\alpha})\}_{\alpha \in A}$ be a collection of ringed spaces. Suppose for each $\alpha, \beta \in A$, we are given an open set $U_{\alpha \beta} \subset X_\alpha$, with $U_{\alpha \alpha}=X_\alpha$ for all $\alpha$, and suppose there are isomorphisms of ringed spaces $\varphi_{\alpha \beta} : U_{\alpha \beta} \rightarrow U_{\beta \alpha}$ such that $\varphi_{\alpha \alpha}$ is the identity for all $\alpha$, and $\varphi_{\beta \alpha}\varphi_{\alpha \beta}$ is the identity for all $\alpha$ and $\beta$.

Suppose further that $\varphi_{\alpha \beta}^\gamma:=\varphi_{\alpha \beta}|_{U_{\alpha \beta} \cap U_{\alpha \gamma} }$ maps $U_{\alpha \beta} \cap U_{\alpha \gamma}$ isomorphically to $U_{\beta \alpha} \cap U_{\beta \gamma}$ for all $\alpha, \beta, \gamma$ in $A$, and these isomorphisms satisfy
$$\varphi_{\gamma \beta}^{\alpha} \varphi_{\alpha\gamma}^\beta = \varphi_{\alpha \beta}^\gamma$$
for all $\alpha, \beta,\gamma \in A$.

Then there exists a ringed space $X$ defined as the quotient space of $\bigsqcup_\alpha X_\alpha$ by the equivalence relation generated by $x \sim \varphi_{\alpha \beta}(x)$ for all $x \in U_{\alpha\beta}$ and all $\alpha,\beta \in A$, together with the natural sheaf of rings. Also, $X$ has an open cover $X = \bigcup X_\alpha$ by the ringed spaces $X_\alpha$, in the sense that for each $\alpha$ there exists a morphism $\psi_\alpha : X_\alpha\rightarrow X$ which is an isomorphism of $X_\alpha$ with an open subspace of $X$, and $\psi_\alpha$ restricts to an isomorphism of $U_{\alpha\beta}$ with $\psi_\alpha(X_\alpha)\cap \psi_\beta(X_\beta)$ for all $\alpha$ and $\beta$, and $\psi_\alpha = \psi_\beta \varphi_{\alpha\beta}$ on $U_{\alpha \beta}$.
\end{prop}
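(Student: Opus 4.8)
The plan is to reproduce the standard gluing construction from algebraic geometry, observing that nothing in it uses commutativity of the rings, and to carry it out in three stages: build the underlying topological space, equip it with a sheaf of rings, then verify the open-cover properties. First I would form the space. Let $\widetilde X=\bigsqcup_\alpha X_\alpha$ with the disjoint-union topology, let $\iota_\alpha:X_\alpha\hookrightarrow\widetilde X$ be the inclusions, and let $\sim$ be the equivalence relation on $\widetilde X$ generated by $\iota_\alpha(x)\sim\iota_\beta(\varphi_{\alpha\beta}(x))$ for $x\in U_{\alpha\beta}$. Put $X=\widetilde X/{\sim}$ with the quotient topology, let $q:\widetilde X\to X$ be the projection, and set $\psi_\alpha=q\circ\iota_\alpha$. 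The crucial point is that $\sim$ is \emph{thin}: $\iota_\alpha(x)\sim\iota_\beta(y)$ if and only if $x\in U_{\alpha\beta}$ and $y=\varphi_{\alpha\beta}(x)$. A priori such an equivalence is witnessed by a finite chain of elementary relations through indices $\alpha=\alpha_0,\dots,\alpha_n=\beta$; using $\varphi_{\alpha\alpha}=\mathrm{id}$, $\varphi_{\beta\alpha}\varphi_{\alpha\beta}=\mathrm{id}$, and the cocycle identity $\varphi^{\alpha}_{\gamma\beta}\varphi^{\beta}_{\alpha\gamma}=\varphi^{\gamma}_{\alpha\beta}$, one collapses any such chain to length one by induction on $n$. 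Granting this, each $\psi_\alpha$ is injective, $\psi_\beta^{-1}(\psi_\alpha(X_\alpha))=U_{\beta\alpha}$ is open so $\psi_\alpha(X_\alpha)$ is open and $\psi_\alpha$ is a homeomorphism onto it, $\psi_\alpha(X_\alpha)\cap\psi_\beta(X_\beta)=\psi_\alpha(U_{\alpha\beta})$, $\psi_\alpha=\psi_\beta\varphi_{\alpha\beta}$ on $U_{\alpha\beta}$, and $X=\bigcup_\alpha\psi_\alpha(X_\alpha)$ since $q$ is surjective.

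Next I would define the sheaf. Each $\varphi_{\alpha\beta}$, being an isomorphism of ringed spaces, comes with an isomorphism $\varphi_{\alpha\beta}^{\#}:\mathcal{O}_{X_\beta}|_{U_{\beta\alpha}}\xrightarrow{\ \sim\ }(\varphi_{\alpha\beta})_*\big(\mathcal{O}_{X_\alpha}|_{U_{\alpha\beta}}\big)$. Writing $X_\alpha^V=\psi_\alpha^{-1}(V)$ for an open $V\subset X$, I would set
\[
\mathcal{O}_X(V)=\Big\{(s_\alpha)\in\textstyle\prod_\alpha\mathcal{O}_{X_\alpha}(X_\alpha^V):\ \varphi_{\alpha\beta}^{\#}\big(s_\beta|_{U_{\beta\alpha}\cap X_\beta^V}\big)=s_\alpha|_{U_{\alpha\beta}\cap X_\alpha^V}\ \text{for all }\alpha,\beta\Big\},
\]
with componentwise restriction maps; this is a presheaf of rings. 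To check the sheaf axioms, take an open cover $V=\bigcup_i V_i$ and compatible families $(s^{(i)}_\alpha)\in\mathcal{O}_X(V_i)$: for each fixed $\alpha$ the $s^{(i)}_\alpha$ agree on the overlaps inside $X_\alpha^V$, so the sheaf axiom for $\mathcal{O}_{X_\alpha}$ yields a unique $s_\alpha\in\mathcal{O}_{X_\alpha}(X_\alpha^V)$, and the conditions defining $\mathcal{O}_X$ hold for $(s_\alpha)$ by the uniqueness part of that axiom; locality is similar. I expect this stage — specifically, verifying that the conditions over triple overlaps $U_{\alpha\beta}\cap U_{\alpha\gamma}$ are automatically satisfied, which is exactly where $\varphi^{\alpha}_{\gamma\beta}\varphi^{\beta}_{\alpha\gamma}=\varphi^{\gamma}_{\alpha\beta}$ is needed — to be the main piece of bookkeeping; everything else is formal.

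Finally I would promote each $\psi_\alpha$ to an isomorphism of ringed spaces $(X_\alpha,\mathcal{O}_{X_\alpha})\to(\psi_\alpha(X_\alpha),\mathcal{O}_X|_{\psi_\alpha(X_\alpha)})$. For $V\subset\psi_\alpha(X_\alpha)$ open one has $X_\beta^V\subset U_{\beta\alpha}$ for every $\beta$, so the defining condition of $\mathcal{O}_X(V)$ forces each component $s_\beta$ to be $(\varphi_{\alpha\beta}^{\#})^{-1}$ applied to a restriction of $s_\alpha$; conversely that formula produces from any $s_\alpha\in\mathcal{O}_{X_\alpha}(X_\alpha^V)$ a compatible family, again by the cocycle identity. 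Hence the projection to the $X_\alpha$-component is a ring isomorphism $\mathcal{O}_X(V)\xrightarrow{\ \sim\ }\mathcal{O}_{X_\alpha}(X_\alpha^V)=\big((\psi_\alpha)_*\mathcal{O}_{X_\alpha}\big)(V)$, natural in $V$, and this is the required $\psi_\alpha^{\#}$. Combined with the first stage, this yields every assertion of the proposition. In short, the argument is a verbatim transcription of the classical gluing lemma (e.g.\ \cite[Exercise 2.12]{Hartshorne}), the only two points requiring care being the chain-collapsing lemma for $\sim$ and the triple-overlap compatibility for $\mathcal{O}_X$, and in neither is commutativity of the rings used.
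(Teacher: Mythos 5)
Your construction is correct and is exactly the standard gluing argument (Hartshorne, Exercise 2.12) that the paper itself invokes without proof: quotient of the disjoint union by the elementary relation made transitive via the cocycle condition, sheaf of compatible families, and promotion of the maps $\psi_\alpha$ to isomorphisms onto open ringed subspaces. Since the paper offers no independent proof beyond this citation, there is nothing to compare; your write-up simply supplies the standard details, and it does so correctly.
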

In commutative algebra, the simplest case of Proposition \ref{glueingringedspaces} is when we have two commutative rings $R$ and $S$, and $loc(R,f)\cong loc(S,g)$ for some $f \in R$ and some $g \in S$. Then $\Spec(R)$ and $\Spec(S)$ may be glued along the open subset $D(f) \cong D(g)$. This construction cannot be generalized directly to noncommutative rings, because if $R$ is a noncommutative ring then it may not be true that the basic open subset $\widetilde{U_E}$ of $\NCSpec(R)$ is isomorphic as a ringed space to $\NCSpec(loc(R,E))$. However, we can identify a common situation in which it is.
\begin{defn}
Let $R$ be a ring. A subset $S$ of $R$ is \emph{multiplicatively closed} if $1 \in S$ and for all $s, s' \in R$, if $s, s' \in S$ then $ss' \in S$.
\end{defn}
\begin{defn}\cite[2.1.6]{McRob1987}\label{Oresetdef}
Let $R$ be a ring. A multiplicatively closed subset $S$ of $R$ is called a \emph{right Ore set} if whenever $r \in R$ and $s \in S$, there exist $r' \in R$ and $s' \in S$ with $rs'=sr'$.
\end{defn}
There is an analogous notion of left Ore set. The point of Definition \ref{Oresetdef} is that a localization of $R$ at an Ore set $S$ will have a nice form, because every fraction of the form $s^{-1}r$ can be rewritten as $r'(s')^{-1}$ with the inverse of an element of $S$ on the right.

We need one more definiton.
\begin{defn}
If $R$ is a ring and $E \subset R$, write $\langle E\rangle$ for the smallest multiplicatively closed subset of $R$ containing $E$. Thus, $\langle E\rangle$ consists of $1_R$ together with all the elements of $R$ formed by multiplying elements of $E$.
\end{defn}
Note that if $R$ is a ring and $E \subset R$ then $loc(R,E)=loc(R,\langle E\rangle)$. \begin{lem}\label{Oreset}
Let $R$ be a ring and let $E$ be a finite subset of $R$ such that $\langle E \rangle$ is a left or right Ore set. Let $U_E=\{R_F : R_F \ge R_E\}$ be the basic open subset of $L(R)$ corresponding to $E$. Let $\widetilde{U_E}$ be the corresponding basic open subset of $\NCSpec(R)$. Then there is an isomorphism of ringed spaces
$$\left(\widetilde{U_E}, \mathcal{O}_{\NCSpec(R)}|_{\widetilde{U_E}}\right) \cong \NCSpec(loc(R,E)).$$
\end{lem}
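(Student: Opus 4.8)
Throughout write $S = loc(R,E)$ and $\alpha = \alpha_E : R \to S$ for the localization map. The plan is: (i) show the order-preserving map $t_{\alpha_E} : L(R) \to L(S)$ of Section \ref{functoralitysn}, restricted to the upper set $U_E$, is an isomorphism of posets $U_E \cong L(S)$; (ii) deduce a homeomorphism of soberifications $\widetilde{U_E} \cong S(L(S)) = \NCSpec(S)$; (iii) match the structure sheaves on a base, via Lemma \ref{vakil2}. Note that whenever $R_F \in U_E$ we have $E \preceq F$, hence $F \sim E \cup F$; so we may always take the representative of $R_F$ to contain $E$, and then a homomorphism out of $loc(R, E\cup F)$ is the same thing as a homomorphism out of $R$ inverting $E\cup F$, equivalently a homomorphism out of $S$ inverting $\alpha(F)$. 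Thus there is a canonical isomorphism $loc(R,F) \cong loc(R, E\cup F) \cong loc(S,\alpha(F))$ compatible with the maps from $R$, under which $\alpha_F(x)$ corresponds to $\alpha_{\alpha(F)}(\alpha(x))$.

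For step (i): the restriction $\bar t : R_F \mapsto S_{\alpha(F)}$ is order-preserving by Proposition \ref{theta}. It is an order-embedding, in particular injective: for $R_F, R_{F'} \in U_E$ with $E \subseteq F'$, the identification above turns the condition ``$\alpha_{F'}(x)$ is a unit in $loc(R,F')$ for all $x\in F$'' into ``$\alpha_{\alpha(F')}(\alpha(x))$ is a unit in $loc(S,\alpha(F'))$ for all $x \in F$'', so $R_F \le R_{F'}$ if and only if $S_{\alpha(F)} \le S_{\alpha(F')}$. It is surjective, and this is the one place the Ore hypothesis is used: since $loc(R,E) = loc(R, \langle E\rangle)$ and $\langle E\rangle$ is, say, a right Ore set, the classical theory of Ore localization \cite{McRob1987} gives that every element of $S$ has the form $\alpha(r)\alpha(s)^{-1}$ with $r\in R$ and $s \in \langle E\rangle$; as $\alpha(s)$ is already a unit, a homomorphism out of $S$ inverts $\alpha(r)\alpha(s)^{-1}$ if and only if it inverts $\alpha(r)$, so for a finite $G = \{g_1,\dots,g_k\}\subset S$, writing $g_i = \alpha(r_i)\alpha(s_i)^{-1}$ and $F_0 = \{r_1,\dots,r_k\}$, we obtain $S_G = S_{\alpha(F_0)} = \bar t(R_{F_0\cup E})$ with $R_{F_0\cup E}\in U_E$. (If $\langle E\rangle$ is left Ore one uses $\alpha(s)^{-1}\alpha(r)$ instead.) Hence $\bar t$ is an isomorphism of posets.

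For step (ii): the subspace topology on the open set $U_E \subset L(R)$ is again the Alexandrov topology, so $\bar t$ is a homeomorphism for the Alexandrov topologies, and by Proposition \ref{soberification} it induces a homeomorphism $S(U_E) \cong S(L(S))$. It remains to identify $\widetilde{U_E} \subset S(L(R))$ with $S(U_E)$. Using Lemma \ref{irredclosed}, the assignments $C \mapsto C\cap U_E$ and $D\mapsto \overline D$ (closure taken in $L(R)$) are mutually inverse bijections between $\widetilde{U_E} = \{C \in S(L(R)) : C\cap U_E \neq\varnothing\}$ and $S(U_E)$: if $C$ is a lower set directed upward and meeting $U_E$, then for $x,y \in C\cap U_E$ any $z\in C$ above both already lies in the upper set $U_E$, so $C\cap U_E$ is directed upward, and the same remark gives $\overline{C\cap U_E} = C$; conversely the lower set $\overline D$ generated in $L(R)$ by a directed-upward $D\subset U_E$ is again directed upward, and $\overline D\cap U_E = D$ because $D$ is a lower set of $U_E$. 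One checks that open sets correspond on both sides using $\widetilde{U\cap V} = \widetilde U\cap\widetilde V$ (Proposition \ref{soberification}), so this is a homeomorphism. Composing, we get a homeomorphism $h : \NCSpec(S) \to \widetilde{U_E}$ satisfying $h^{-1}(\widetilde{U_F}) = \widetilde{U_{\alpha(F)}}$ for every $R_F\in U_E$.

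For step (iii): since $L(R)$ is a join semilattice, $\widetilde{U_{E'}}\cap\widetilde{U_E} = \widetilde{U_{E'\cup E}}$ (Propositions \ref{join} and \ref{soberification}), so the basic open subsets of $\widetilde{U_E}$ are exactly the $\widetilde{U_F}$ with $R_F\ge R_E$, and $h^{-1}$ carries each of these to the basic open set $\widetilde{U_{\alpha(F)}}$ of $\NCSpec(S)$. Taking $E\subseteq F$, the canonical isomorphism $\mathcal{O}_{\NCSpec(R)}(\widetilde{U_F}) = loc(R,F) \cong loc(S,\alpha(F)) = \mathcal{O}_{\NCSpec(S)}(h^{-1}(\widetilde{U_F}))$ from the first paragraph is compatible with the restriction maps $p$ by a routine diagram chase, which reduces to commutativity of diagrams of maps out of $R$ because $\alpha_F : R\to loc(R,F)$ is epic (Proposition \ref{epi}). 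Lemma \ref{vakil2} then assembles these into an isomorphism of sheaves $h^\# : \mathcal{O}_{\NCSpec(R)}|_{\widetilde{U_E}} \to h_*\mathcal{O}_{\NCSpec(S)}$, and $(h, h^\#)$ is the desired isomorphism of ringed spaces. The main obstacle is the surjectivity in step (i): it is genuinely where the Ore condition is needed, since a localization of $loc(R,E)$ at an element that cannot be brought to the form $\alpha(r)\alpha(s)^{-1}$ need not be of the form $loc(R,F)$; the soberification bookkeeping in step (ii) is routine but is the other place demanding care.
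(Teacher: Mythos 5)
Your proof is correct and follows essentially the same route as the paper: a poset isomorphism $U_E \cong L(loc(R,E))$ whose surjectivity uses the Ore form $\alpha(r)\alpha(s)^{-1}$, the identification of $\widetilde{U_E}$ with $S(U_E)$ via $C \mapsto C\cap U_E$ and lower closure, and Lemma \ref{vakil2} to assemble the sheaf isomorphism. The only cosmetic difference is that you obtain the isomorphism $loc(R,F)\cong loc(S,\alpha(F))$ by noting both represent the same universal property over $R$, where the paper constructs the inverse of $(\alpha_E)_F$ explicitly; the content is the same.
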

\begin{proof}
Write $R \stackrel{\alpha_E}{\longrightarrow} loc(R,E)$ for the localization of $R$ at $E$.
We wish to define an isomorphism of ringed spaces
$$\widetilde{U_E} \rightarrow \NCSpec(loc(R,E)).$$
First, we define a function $\phi:U_E \rightarrow L(loc(R,E))$. If $R_F \in U_E$, we set $\phi(R_F) = loc(R,E)_{\alpha_E(F)}$. By Proposition \ref{theta}, this is a well-defined order-preserving map. By Proposition \ref{thetaa}, the following diagram commutes.
$$
\xymatrix{
R \ar[rr]^{\alpha_E} \ar[d]_{\alpha_F} && loc(R,E)\ar[d]^{\alpha_{\alpha_E(F)}}\\
loc(R,F) \ar[rr]^{(\alpha_E)_F} && loc(loc(R,E), \alpha_E(F))
}
$$
%We may use this to show that $\phi$ is order-preserving. If $R_F, R_{F'} \in U_E$ and $R_F \le R_{F'}$ then $\alpha_{F'}(f)$ is a unit for all $f\in F$, so $\alpha_{\alpha_E(F')}\alpha_E(f)=(\alpha_E)_{F'} \alpha_{F'}(f)$ is also a unit, and so $loc(R,E)_{\alpha_E(F)}\le loc(R,E)_{\alpha_E(F')}$.
To proceed further, we make the observation that if $R_F \in U_E$ then the map $(\alpha_E)_F$ in the above diagram is an isomorphism. To show this, observe that if $R_F \in U_E$ then $R_F \ge R_E$ so $F \succeq E$. Consider the following diagram.
$$\xymatrix{
R \ar[rr]^{\alpha_E} \ar[drr]^{\alpha_F} && loc(R,E) \ar[d]^{p_{FE}}\ar[rr]^>>>>>>{\alpha_{\alpha_E(F)}}&& loc(loc(R,E),\alpha_E(F))\\
&& loc(R,F) &&
}
$$
If $f \in F$ then $p_{FE}\alpha_E(f)=\alpha_F(f)$ which is a unit in $loc(R,F)$, so there is a unique $\delta : loc(loc(R,E), \alpha_E(F)) \rightarrow loc(R,F)$ such that $\delta \alpha_{\alpha_E(F)} =p_{FE}$. Then $\delta (\alpha_E)_F \alpha_F =\delta \alpha_{\alpha_E(F)} \alpha_E = p_{FE}\alpha_E =\alpha_F$. By Proposition \ref{epi}, we obtain $\delta (\alpha_E)_F = id$. On the other hand, $(\alpha_E)_F \delta \alpha_{\alpha_E(F)} \alpha_E = (\alpha_E)_F p_{FE}\alpha_E= (\alpha_E)_F\alpha_F =\alpha_{\alpha_E(F)}\alpha_E$. Since $\alpha_E$ and $\alpha_{\alpha_E(F)}$ are epic in the category of rings, we obtain $(\alpha_E)_F \delta=id$. So $(\alpha_E)_F$ is invertible as claimed.

Now, if $R_F, R_{F'} \in U_E$ with $\phi(R_F) \le \phi(R_{F'})$ then $\alpha_E(F) \preceq \alpha_E(F')$. So for all $f \in F$, $\alpha_{\alpha_E(F')}(\alpha_E(f))$ is a unit in $loc(loc(R,E) ,\alpha_E(F'))$. But $\alpha_{\alpha_E(F')}\alpha_E(f)=(\alpha_E)_{F'}\alpha_{F'}(f)$ so $\alpha_{F'}(f)$ is a unit because $(\alpha_E)_{F'}$ is an isomorphism. Therefore, $R_F\le R_{F'}$.

We have shown that $\phi(R_F) \le \phi(R_{F'})$ implies $R_F\le R_{F'}$. This implies that $\phi$ is one-to-one and that $\phi^{-1}$ is continuous where defined (since it is order-preserving).

Now we show that $\phi$ is surjective by using the assumption that $\langle  E \rangle$ is an Ore set. Suppose $\langle E \rangle$ is a right Ore set (the proof for a left Ore set is analogous). By \cite[Lemma 2.1.8]{McRob1987}, we may write every element of $loc(R,E)$ in the form $\alpha_E(g)(\alpha_E(e))^{-1}$ for some $g \in R$ and $e \in \langle E \rangle$. If $F=\{f_1, \ldots, f_n\}$ is an arbitrary finite subset of $loc(R,E)$, we write $f_i = \alpha_E(g_i)(\alpha_E(e_i))^{-1}$, $1 \le i \le n$. Let $G=\{g_1, \ldots, g_n\} \subset R$. Then $loc(R,E)_F=loc(R,E)_{\alpha_E(G)}=loc(R,E)_{\alpha_E(E \cup G)}$. So $R_{E \cup G} \in U_E$ and $\phi(R_{E \cup G})=loc(R,E)_F$. Therefore, $\phi$ is surjective. Since we have already shown that $\phi$ and $\phi^{-1}$ are continuous, $\phi$ is a homeomorphism.

The homeomorphism $\phi: U_E \rightarrow L(loc(R,E))$ induces a homeomorphism
$$\tilde{\phi} : S(U_E) \rightarrow S(L(loc(R,E))).$$
To obtain the desired homeomorphism $\widetilde{U_E} \rightarrow S(L(loc(R,E)))$, we need to check that $S(U_E)$ (the soberification of the subspace $U_E$ of $L(R)$) is homeomorphic to $\widetilde{U_E}$, a subspace of $S(L(R))$. To see this, we may define a map $S(U_E) \rightarrow S(L(R))$ by $C \mapsto \overline{C}=\{R_F \in L(R): R_F \le R_G \text{ for some } R_G \in C\}$. This map is continuous and its image is $\widetilde{U_E}$. We may define a continuous inverse via $Z \mapsto Z \cap U_E$ for $Z \in \widetilde{U_E} \subset S(L(R))$. Thus, $S(U_E)$ is homeomorphic to $\widetilde{U_E}$ and so we have a homeomorphism
$$\varphi: \widetilde{U_E} \rightarrow S(L(loc(R,E))).$$
We must now define a map of sheaves $\mathcal{O}_{S(L(loc(R,E)))}\rightarrow \varphi_*\mathcal{O}_{\widetilde{U_E}}$. We have already seen that any basic open subset of $L(loc(R,E))$ is of the form $U_{\alpha_E(F)}$ for some $F \in L_0(R)$ with $R_F \ge R_E$. So any basic open subset of $S(L(loc(R,E)))$ is of the form $\widetilde{U_{\alpha_E(F)}}$ for some finite $F \subset R$ with $R_F \ge R_E$. Using the definition of $\varphi$, we calculate
\begin{align*}
\varphi^{-1}(\widetilde{U_{\alpha_E(F)}})&= \{C \in \widetilde{U_E}: C\cap U_E \in \phi^{-1}(\widetilde{U_{\alpha_E(F)}})\}\\
&= \{C \in \widetilde{U_E} : \phi(C \cap U_E) \cap U_{\alpha_E(F)} \neq \varnothing\}\\
&= \{C \in \widetilde{U_E} : \exists R_A \in C\cap U_E \text{ with } loc(R,E)_{\alpha_E(A)} \ge loc(R,E)_{\alpha_E(F)}\}\\
&= \{C \in \widetilde{U_E} :\exists R_A \in C\cap U_E \text{ with } \phi(R_A) \ge \phi(R_F)\}\\
&= \{C \in \widetilde{U_E} :\exists R_A \in C\cap U_E \text{ with } R_A \ge R_F\}\\
&= \{C \in \widetilde{U_E} : C \cap U_F \neq \varnothing\}\\
&= \widetilde{U_F}.
\end{align*}
So for each $F$, we need a ring isomorphism $loc(loc(R,E), \alpha_E(F)) \rightarrow loc(R,F)$. We use the map $(\alpha_E)_F^{-1}$ defined above. If $\widetilde{U_F} \subset \widetilde{U_{F'}} \subset \widetilde{U_E}$ then Proposition \ref{theta} implies that the following diagram commutes.
$$\xymatrix{
loc(R,F') \ar[d]_{p_{FF'}} && \ar[ll]_{(\alpha_E)_{F'}^{-1}} loc(loc(R,E),\alpha_E(F')) \ar[d]^{p_{\alpha_E(F)\alpha_E(F')}}\\
loc(R,F) && \ar[ll]_{(\alpha_E)_{F}^{-1}} loc(loc(R,E), \alpha_E(F))
}$$
We may therefore apply Lemma \ref{vakil2} to get the desired isomorphism of sheaves
$\varphi^\# : \mathcal{O}_{S(L(loc(R,E)))}\rightarrow \varphi_*\mathcal{O}_{\widetilde{U_E}}$. Then $(\varphi, \varphi^\#)$ is an isomorphism of ringed spaces, as required.
\end{proof}
Note that if $E$ is a finite subset of $R$ such that $\langle E \rangle$ is a left or right Ore set then the map $(\varphi, \varphi^\#)$ defined in Lemma \ref{Oreset} makes the following diagram of ringed spaces and morphisms commute
$$
\xymatrix{
&& \NCSpec(R) \\
\widetilde{U_E} \ar[urr]^i \ar[rr]_>>>>>>{(\varphi,\varphi^\#)} &&\NCSpec(loc(R,E)) \ar[u]_\alpha
}$$
where $i$ is the inclusion and $\alpha$ is the map of ringed spaces induced by $\alpha_E: R \rightarrow loc(R,E)$.
\begin{example}\label{Oreexample}
\begin{rm}
In view of Lemma \ref{Oreset}, we may ``glue rings along Ore localizations". To be precise, suppose $R_1, \ldots, R_n$ are rings and for each $i$ let $\{E_{ij} : 1 \le j \le n\}$ be a collection of finite subsets of $R_i$ such that each $\langle E_{ij} \rangle$ is a right Ore set and such that $E_{ii}=\{1_{R_i}\}$ for all $i$. Suppose for each $i,j$ there are isomorphisms
$$\psi_{ij}: loc(R_i, E_{ij}) \rightarrow loc(R_j, E_{ji}),$$
and for each $i,j,k$, there are isomorphisms
$$\psi_{ij}^k : loc(R_i, E_{ij}\cup E_{ik}) \rightarrow loc(R_j, E_{ji}\cup E_{jk})$$ such that the following diagram commutes for all $i,j,k$.
$$\xymatrix{
loc(R_i, E_{ij})\ar[d]_{p_{{E_{ij}\cup E_{ik}, E_{ij}}}} \ar[rr]^{\psi_{ij}} && loc(R_j, E_{ji})\ar[d]^{p_{E_{ji} \cup E_{jk}, E_{ji}}}\\
loc(R_i, E_{ij}\cup E_{ik}) \ar[rr]^{\psi_{ij}^k} && loc(R_j, E_{ji}\cup E_{jk})
}$$
(Note that Proposition \ref{epi} implies that the $\psi_{ij}^k$ are uniquely determined by the $\psi_{ij}$.)
Assume that $\psi_{ii}=\psi_{ij}\psi_{ji} = id$ for all $i,j$ and
$$\psi_{jk}^i \psi_{ij}^k = \psi_{ik}^j$$
for all $i,j,k$.

It is clear that $\langle E_{ij} \cup E_{ik} \rangle$ is a right Ore set if $\langle E_{ij}\rangle$ and $\langle E_{ik}\rangle$ are, and so in view of Lemma \ref{Oreset}, all the assumptions of Proposition \ref{glueingringedspaces} are satisfied, and so we may glue the spaces $\NCSpec(R_i)$ to obtain a ringed space $X$ which has an open cover $X=\bigcup_{i=1}^n X_i$ with $X_i \cong\NCSpec(R_i)$ for each $i$.

We shall return to this example later.
\end{rm}
\end{example}
\subsection{Quasicoherent sheaves}
Now we explain how an $R$--module gives rise to a sheaf of $\mathcal{O}$--modules on $\NCSpec(R)$. We could work with left $R$--modules, right $R$--modules or $R-R$--bimodules. We choose to work with left $R$--modules.
\begin{lem}
Let $R$ be a ring and let $M$ be a left $R$--module. Let $(X,\mathcal{O}_X)=\NCSpec(R)$. Then there is a sheaf $\widetilde{M}$ of left $\mathcal{O}_X$--modules on $X$ with
$$\widetilde{M}(\widetilde{U_E})=loc(R,E)\otimes_R M$$
and the restriction maps $p_{FE}\otimes 1: loc(R,E)\otimes_R M \rightarrow loc(R,F)\otimes_R M$ for each inclusion $\widetilde{U_F}\subset \widetilde{U_E}$ of basic open subsets of $X$.
\end{lem}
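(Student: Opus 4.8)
The plan is to mimic, almost verbatim, the construction of the structure sheaf $\mathcal{O}$ on $\NCSpec(R)$ carried out in Sections \ref{Alexandrov} and \ref{sobersn}, replacing each ring $loc(R,E)$ by the abelian group $loc(R,E)\otimes_R M$ and each transition map $p_{FE}$ by $p_{FE}\otimes 1_M$. First I would recall that the sets $\widetilde{U_E}$ with $E\in L_0(R)$ form a base of $X=\NCSpec(R)$, that $\widetilde{U_F}\subset\widetilde{U_E}$ if and only if $R_F\ge R_E$, and that in this situation Proposition \ref{preorder} supplies a map $p_{FE}:loc(R,E)\to loc(R,F)$ with $p_{EE}=\mathrm{id}$ and $p_{GE}=p_{GF}p_{FE}$ whenever $\widetilde{U_G}\subset\widetilde{U_F}\subset\widetilde{U_E}$.

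Next I would set $O(\widetilde{U_E})=loc(R,E)\otimes_R M$ with restriction maps $p_{FE}\otimes 1_M$. Tensoring the identities $p_{EE}=\mathrm{id}$ and $p_{GE}=p_{GF}p_{FE}$ with $M$ over $R$ shows that these data satisfy the compatibility hypotheses of Lemma \ref{vakil1}. The gluing hypothesis $(*)$ of that lemma is again automatic: by Proposition \ref{basic} every $\widetilde{U_E}$ is completely $\cup$--irreducible, so any cover of $\widetilde{U_E}$ by basic open sets already contains $\widetilde{U_E}$ itself, and $(*)$ is vacuous. The proof of Lemma \ref{vakil1} was written for sheaves of rings but uses nothing about the ring structure beyond reconstructing a sheaf from its stalks $\plim_{x\in B}O(B)$; it therefore applies verbatim to produce a sheaf of abelian groups $\widetilde{M}$ on $X$ whose values and restriction maps on the base are exactly as stated.

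It then remains to equip $\widetilde{M}$ with its $\mathcal{O}_X$--module structure. On $\widetilde{U_E}$ the group $loc(R,E)\otimes_R M$ is a left module over $loc(R,E)=\mathcal{O}_X(\widetilde{U_E})$, and for $\widetilde{U_F}\subset\widetilde{U_E}$ the restriction map is semilinear over $p_{FE}$: for $s\in loc(R,E)$ and $x\in loc(R,E)\otimes_R M$ one has $(p_{FE}\otimes 1_M)(sx)=p_{FE}(s)\,(p_{FE}\otimes 1_M)(x)$, which is immediate on simple tensors. Consequently each stalk $\widetilde{M}_x=\plim_{x\in\widetilde{U_E}}loc(R,E)\otimes_R M$ is a left module over $\mathcal{O}_{X,x}=\plim_{x\in\widetilde{U_E}}loc(R,E)$. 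Using the description, from the proof of Lemma \ref{vakil1}, of sections over an arbitrary open $U$ as the families $(s_x)\in\prod_{x\in U}\widetilde{M}_x$ that are locally represented by sections on basic opens, the componentwise action of $\prod_{x\in U}\mathcal{O}_{X,x}$ on $\prod_{x\in U}\widetilde{M}_x$ carries $\mathcal{O}_X(U)\times\widetilde{M}(U)$ into $\widetilde{M}(U)$, because a local section times a local section is again a local section. These actions commute with restriction, so $\widetilde{M}$ is a sheaf of left $\mathcal{O}_X$--modules with the asserted values on the base.

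The diagram chases for the functoriality identities and the semilinearity of the restriction maps are routine. The only point deserving real care is checking that Lemma \ref{vakil1}, stated for sheaves of rings, genuinely transfers to the abelian-group setting and that the $\mathcal{O}_X(U)$--action on $\widetilde{M}(U)$ is well defined for \emph{every} open $U$, not merely the basic ones; I expect this verification — short, but the crux of the argument — to be the main obstacle.
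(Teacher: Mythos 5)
Your proposal is correct and follows essentially the same route as the paper: define the data on the base $\{\widetilde{U_E}\}$ with restriction maps $p_{FE}\otimes 1$, note that condition $(*)$ of Lemma \ref{vakil1} is vacuous because each $\widetilde{U_E}$ is completely $\cup$--irreducible, and invoke Lemma \ref{vakil1} in its $\mathcal{O}_X$--module form. The extra detail you supply on the semilinearity of the restriction maps and the stalkwise module structure is exactly the verification the paper leaves implicit in the phrase ``in its form for sheaves of $\mathcal{O}_X$--modules.''
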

\begin{proof}
Just as in the construction of the structure sheaf $\mathcal{O}_X$ in Section \ref{Alexandrov}, we may define $\widetilde{M}$ as in the statement of the lemma, and then apply Lemma \ref{vakil1} (in its form for sheaves of $\mathcal{O}_X$--modules) to conclude that there exists a unique sheaf of left $\mathcal{O}_X$--modules which agrees with $loc(R,E) \otimes_R M$ on $\widetilde{U_E}$. The condition $(*)$ of Lemma \ref{vakil1} holds automatically because every basic open set $\widetilde{U_E}$ is completely $\cup$--irreducible.
\end{proof}
It is convenient now to define an analogue of the usual notion of quasicoherent sheaf.
\begin{defn}\label{qcohdef}
Let $X$ be a ringed space and let $\mathcal{M}$ be a sheaf of left $\mathcal{O}_X$--modules. We say $\mathcal{M}$ is \emph{quasicoherent} if there exists an open cover $X = \bigcup_{i \in I} U_i$ such that for each $i\in I$, $(U_i , \mathcal{O}_X|_{U_i}) \cong \NCSpec(A_i)$ for some ring $A_i$, and such that for each $i$ there is a left $A_i$--module $M_i$ with $\mathcal{M}|_{U_i} \cong \widetilde{M_i}$.
\end{defn}
If $X$ is a ringed space, we denote by $\mathcal{O}_X-\mathrm{Mod}$ the abelian category of all sheaves of left $\mathcal{O}_X$--modules (a.k.a. left $\mathcal{O}_X$--modules) and module homomorphisms, and by $\mathrm{Qcoh}(X)$ the full subcategory of quasicoherent left $\mathcal{O}_X$--modules. In general, $\mathrm{Qcoh}(X)$ might not be an abelian category.
\begin{lem}
Let $R$ be a ring. If $\mathcal{M}$ is a quasicoherent sheaf of left $\mathcal{O}_X$--modules on $X=\NCSpec(R)$ then there exists a left $R$--module $M$ such that $\mathcal{M} \cong \widetilde{M}$.
\end{lem}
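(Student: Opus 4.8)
The plan is to take global sections and show that $M := \mathcal{M}(X)$, which is a module over $\mathcal{O}_X(X) = loc(R,\{1_R\}) = R$, does the job. So set $M = \Gamma(X, \mathcal{M})$, a left $R$--module, and produce a map of sheaves $\widetilde{M} \to \mathcal{M}$ which we then check is an isomorphism on the basic open sets $\widetilde{U_E}$, hence an isomorphism of sheaves.

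First I would construct the morphism $\widetilde{M} \to \mathcal{M}$. Since $\widetilde{M}$ is determined by its values $\widetilde{M}(\widetilde{U_E}) = loc(R,E) \otimes_R M$ on the base together with the maps $p_{FE} \otimes 1$, and since by Lemma \ref{vakil2} (in its module form) a morphism of sheaves is determined by compatible maps on a base, it suffices to give for each $E \in L_0(R)$ a homomorphism $loc(R,E) \otimes_R M \to \mathcal{M}(\widetilde{U_E})$ commuting with restriction. The $R$--module restriction map $M = \mathcal{M}(X) \to \mathcal{M}(\widetilde{U_E})$ is $R$--linear, and $\mathcal{M}(\widetilde{U_E})$ is a module over $\mathcal{O}_X(\widetilde{U_E}) = loc(R,E)$, so by the universal property of extension of scalars it factors through a well-defined $loc(R,E)$--linear map $loc(R,E) \otimes_R M \to \mathcal{M}(\widetilde{U_E})$. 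Compatibility with restriction along $\widetilde{U_F} \subset \widetilde{U_E}$ is immediate from functoriality of the restriction maps of $\mathcal{M}$ and of tensor product. This gives the morphism $\psi: \widetilde{M} \to \mathcal{M}$.

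The main work — and the main obstacle — is showing $\psi_{\widetilde{U_E}}: loc(R,E) \otimes_R M \to \mathcal{M}(\widetilde{U_E})$ is an isomorphism for every finite $E \subset R$. Here I would use quasicoherence of $\mathcal{M}$: there is an open cover $X = \bigcup_i U_i$ with $(U_i, \mathcal{O}_X|_{U_i}) \cong \NCSpec(A_i)$ and $\mathcal{M}|_{U_i} \cong \widetilde{M_i}$ for left $A_i$--modules $M_i$. Since each $\widetilde{U_E}$ is completely $\cup$--irreducible (Proposition \ref{basic}), it is contained in some single $U_i$; in fact, restricting attention to $\widetilde{U_E} \subset U_i \cong \NCSpec(A_i)$, the open set $\widetilde{U_E}$ corresponds under this isomorphism to a basic open set $\widetilde{U_{E'}}$ of $\NCSpec(A_i)$, on which $\widetilde{M_i}$ has sections $loc(A_i, E') \otimes_{A_i} M_i$. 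The key point is then to track the identifications so that the abstract section module $\mathcal{M}(\widetilde{U_E})$ is seen to be of the form ``localization of global sections of $\mathcal{M}$ restricted suitably, tensored up'' — concretely, one shows by a diagram chase through the cover that $\mathcal{M}(\widetilde{U_E}) \cong loc(R,E) \otimes_R \mathcal{M}(X)$ via $\psi_{\widetilde{U_E}}$. The technical heart is checking that the extension-of-scalars map is bijective; the cleanest route is to verify it for $E$ ranging over a system where the answer is forced by the quasicoherent local model (first for the $U_i$ themselves, where it holds by hypothesis since $\mathcal{M}|_{U_i} \cong \widetilde{M_i}$ and $\widetilde{M_i}(U_i) = A_i \otimes_{A_i} M_i = M_i$), and then for arbitrary $\widetilde{U_E}$ by using that $\widetilde{U_E}$ sits inside some $U_i$ and that both $\widetilde{M}$ and $\mathcal{M}$ restrict compatibly, invoking the transitivity of tensor products $loc(R,E) \otimes_R M \cong loc(A_i, E') \otimes_{A_i}(A_i \otimes_R M)$. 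Once $\psi$ is an isomorphism on every basic open set, it is an isomorphism of sheaves, completing the proof.

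One subtlety to handle with care: the cover $\{U_i\}$ need not consist of basic open sets of the Alexandrov-type topology, but each $U_i \cong \NCSpec(A_i)$ has its own basic open sets, and one must be sure that the intersection pattern is compatible — this is exactly where complete $\cup$--irreducibility of $\widetilde{U_E}$ saves us, since it forces $\widetilde{U_E}$ into a single $U_i$ and reduces everything to a statement internal to one $\NCSpec(A_i)$.
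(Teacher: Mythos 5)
Your overall strategy (take $M=\Gamma(X,\mathcal{M})$, build the natural map $\widetilde{M}\to\mathcal{M}$ by extension of scalars, check it on basic opens) could in principle be completed, but as written the ``main work'' step has a genuine gap, and the gap sits exactly where the paper's one-line observation is needed. You apply complete $\cup$--irreducibility to the basic opens $\widetilde{U_E}$ to squeeze each of them into a single chart $U_i\cong\NCSpec(A_i)$, but you never apply it to $X$ itself. Inside a chart with $U_i\neq X$, quasicoherence only gives you $\mathcal{M}(\widetilde{U_E})\cong loc(A_i,E')\otimes_{A_i}\mathcal{M}(U_i)$, where $M_i=\mathcal{M}(U_i)$. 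To conclude that $\psi_{\widetilde{U_E}}:loc(R,E)\otimes_R\Gamma(X,\mathcal{M})\to\mathcal{M}(\widetilde{U_E})$ is bijective, your ``transitivity of tensor products'' step implicitly needs $\mathcal{M}(U_i)\cong A_i\otimes_R\Gamma(X,\mathcal{M})$ (compatibly with $R=\mathcal{O}_X(X)\to\mathcal{O}_X(U_i)\cong A_i$), and there is no a priori reason this holds: nothing in the hypotheses says that sections over $U_i$ are generated over $A_i$ by restrictions of global sections, and proving such a statement for a general open $U_i\subsetneq X$ is essentially as hard as the lemma itself. So the ``diagram chase through the cover'' does not close; surjectivity of $\psi_{\widetilde{U_E}}$ already fails to follow from what you have set up.

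The missing idea is that $X=\widetilde{U_{\{1_R\}}}$ is itself completely $\cup$--irreducible: $R_{1_R}$ is the minimum of $L(R)$, so $L(R)=U_{\{1_R\}}$ is completely $\cup$--irreducible by Proposition \ref{basic}, and hence so is $S(L(R))$. Consequently \emph{any} open cover of $X$, in particular the cover furnished by quasicoherence, must contain $X$ as one of its members: some $U_i=X$. Then $\NCSpec(A_i)\cong\NCSpec(R)$ forces $A_i\cong R$ (e.g.\ by taking global sections, or by Theorem \ref{maintheorem2}), and $\mathcal{M}=\mathcal{M}|_{U_i}\cong\widetilde{M_i}$ already globally, so one may simply take $M=M_i$; this is the paper's proof, and it also repairs your argument, since with $U_i=X$ your $M_i$ is exactly $\Gamma(X,\mathcal{M})$ and the problematic identification becomes trivial. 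I recommend restructuring the proof around this observation rather than the chartwise comparison.
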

\begin{proof}
If $\{U_i\}$ is any open cover of $X$, then we must have $U_i=X$ for some $i$, because $X$ is completely $\cup$--irreducible (indeed, $R_1 \in L(R)$ is the minimum element of $L(R)$, and so $L(R)=U_1$ is completely $\cup$--irreducible by Proposition \ref{basic}, which implies that $S(L(R))$ is completely $\cup$--irreducible as well). So one of the rings $A_i$ in Definition \ref{qcohdef} satisfies $\NCSpec(A_i) \cong \NCSpec(R)$ and therefore $R \cong A_i$ and we may take $M=M_i$.
\end{proof}
The following proposition is an analogue of \cite[Corollary 5.5]{Hartshorne}.
\begin{prop}\label{qcohonaffine}
Let $X=\NCSpec(R)$. Then $M \mapsto \widetilde{M}$ and $\mathcal{M} \mapsto \Gamma(X,\mathcal{M})$ are inverse equivalences of categories between $R-\mathrm{Mod}$ and $\mathrm{Qcoh}(X)$.
\end{prop}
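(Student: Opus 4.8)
The plan is to produce explicit natural isomorphisms realising the two composites as identity functors, exploiting the structural fact that $X=\NCSpec(R)$ is completely $\cup$--irreducible with $X=\widetilde{U_{\{1_R\}}}$. Indeed $R_{\{1_R\}}$ is the minimum of $L(R)$, so $U_{\{1_R\}}=L(R)$ and hence $\widetilde{U_{\{1_R\}}}=S(L(R))=X$; moreover $loc(R,\{1_R\})=R$ since $1_R$ is already a unit, so in particular $\Gamma(X,\mathcal O_X)=\mathcal O_X(\widetilde{U_{\{1_R\}}})=R$. First I would record that $M\mapsto\widetilde M$ is a functor $R\text{-Mod}\to\mathrm{Qcoh}(X)$: a homomorphism $f\colon M\to N$ of left $R$--modules gives maps $1\otimes f\colon loc(R,E)\otimes_R M\to loc(R,E)\otimes_R N$ which commute with the restriction maps $p_{FE}\otimes 1$ and therefore glue to a morphism $\widetilde f\colon\widetilde M\to\widetilde N$ by the module analogue of Lemma \ref{vakil2}, and $\widetilde M$ is quasicoherent because the trivial cover $X=X$ already witnesses Definition \ref{qcohdef} with $A_1=R$ and $M_1=M$. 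Dually, $\Gamma(X,-)$ is a functor $\mathrm{Qcoh}(X)\to R\text{-Mod}$ since $\Gamma(X,\mathcal M)$ is a module over $\Gamma(X,\mathcal O_X)=R$.

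For the composite $\Gamma(X,-)\circ\widetilde{(-)}$, evaluating $\widetilde M$ at the basic open set $\widetilde{U_{\{1_R\}}}=X$ gives $\Gamma(X,\widetilde M)=loc(R,\{1_R\})\otimes_R M=R\otimes_R M$, and the multiplication map $R\otimes_R M\to M$ is an isomorphism of left $R$--modules, plainly natural in $M$; call it $\eta_M$.

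For the composite $\widetilde{(-)}\circ\Gamma(X,-)$, I would first define, for an arbitrary sheaf $\mathcal M$ of left $\mathcal O_X$--modules, a morphism $\varepsilon_{\mathcal M}\colon\widetilde{\Gamma(X,\mathcal M)}\to\mathcal M$ on the base $\{\widetilde{U_E}:E\in L_0(R)\}$ by sending $a\otimes s\in loc(R,E)\otimes_R\Gamma(X,\mathcal M)=\widetilde{\Gamma(X,\mathcal M)}(\widetilde{U_E})$ to $a\cdot(s|_{\widetilde{U_E}})$, using the $\mathcal O_X(\widetilde{U_E})=loc(R,E)$--module structure on $\mathcal M(\widetilde{U_E})$. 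These maps are $loc(R,E)$--linear and commute with the restriction maps, hence assemble into a natural transformation $\varepsilon\colon\widetilde{(-)}\circ\Gamma(X,-)\Rightarrow\mathrm{id}$ of functors on $\mathcal O_X\text{-Mod}$. It remains to see $\varepsilon_{\mathcal M}$ is an isomorphism for quasicoherent $\mathcal M$: by the preceding lemma every quasicoherent $\mathcal M$ is isomorphic to some $\widetilde N$, and naturality of $\varepsilon$ reduces us to $\mathcal M=\widetilde N$; there, on each $\widetilde{U_E}$ the map $\varepsilon_{\widetilde N}$ is, after the identification $\eta_N$, the canonical isomorphism $loc(R,E)\otimes_R(R\otimes_R N)\xrightarrow{\sim}loc(R,E)\otimes_R N$, and a morphism of sheaves which is an isomorphism on every member of a base is an isomorphism. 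Combining $\eta$ and $\varepsilon$ shows that $M\mapsto\widetilde M$ and $\mathcal M\mapsto\Gamma(X,\mathcal M)$ are inverse equivalences.

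I do not anticipate a genuine obstacle: the whole argument is formal once $X=\widetilde{U_{\{1_R\}}}$ and $loc(R,\{1_R\})=R$ are on the table, since these collapse ``global sections'' to ``sections over a basic open''. The only mildly delicate point is checking that $\varepsilon_{\mathcal M}$ really is a well-defined morphism of sheaves of $\mathcal O_X$--modules — well-definedness of $a\otimes s\mapsto a\cdot(s|_{\widetilde{U_E}})$ on the tensor product, $\mathcal O_X$--linearity, and compatibility with restriction — but this is exactly the same bookkeeping already carried out in building $\mathcal O_X$ and $\widetilde M$, and is handled by Lemma \ref{vakil2}.
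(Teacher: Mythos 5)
Your proposal is correct and follows essentially the same route as the paper: since $X=\widetilde{U_{\{1_R\}}}$ is itself a basic open with $loc(R,\{1_R\})=R$, everything is controlled by global sections, and essential surjectivity comes from the preceding lemma identifying quasicoherent sheaves with the $\widetilde{M}$. The paper compresses this into the remark that a morphism $\widetilde{M}\rightarrow\widetilde{M'}$ is determined by its map on global sections; your unit--counit construction ($\eta_M\colon R\otimes_R M\rightarrow M$ and $\varepsilon_{\mathcal{M}}$ checked on the base) is just the same argument written out in full.
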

\begin{proof}
The proposition is easy to check, because any morphism $\widetilde{M} \rightarrow \widetilde{M'}$ of sheaves of left $\mathcal{O}_X$--modules is determined by the corresponding map on global sections.
\end{proof}
\begin{rem}\begin{rm}
Proposition \ref{qcohonaffine} shows that $\mathrm{Qcoh}(\NCSpec(R))$ is an abelian category and that the functor $M \mapsto \widetilde{M}$ is exact. The reader may wonder how this is possible, since it is clear that if $M$ is a left $R$--module then the stalk of $\widetilde{M}$ over the point $\overline{\{R_E\}} \in \NCSpec(R)$ is $loc(R,E) \otimes_R M$, and yet if $0\rightarrow M'' \rightarrow M \rightarrow M' \rightarrow 0$ is a short exact sequence of left $R$--modules, then the sequence of stalks
$$0 \rightarrow loc(R,E) \otimes_R M'' \rightarrow loc(R,E)\otimes_R M \rightarrow loc(R,E) \otimes_R M' \rightarrow 0$$
need not be exact, because $loc(R,E)$ need not be a flat $R$--module. However, the point is that $M \mapsto \widetilde{M}$ is \emph{not} exact when viewed as a functor from $R-\mathrm{Mod}$ to $\mathcal{O}_X-\mathrm{Mod}$. It is only exact when viewed as a functor $R-\mathrm{Mod} \rightarrow \mathrm{Qcoh}(X)$. Kernels and cokernels in $\mathrm{Qcoh}(X)$ may not agree with kernels and cokernels in $\mathcal{O}_X-\mathrm{Mod}$. For example, if $f : \widetilde{M} \rightarrow \widetilde{N}$ is a morphism, then the kernel of $f$ in $\mathrm{Qcoh(X)}$ is $\widetilde{\ker(f_X)}$ where $f_X$ is the map induced by $f$ on global sections. But this is not in general the same as the sheaf kernel of $f$.
\end{rm}\end{rem}
\subsection{}
Now let $X$ be any ringed space which has a finite open cover $X=\bigcup_{i=1}^n X_i$ with $X_i \cong \NCSpec(R_i)$ for some rings $R_i$. Suppose this cover is irredundant; that is, no $X_i$ is contained in $\bigcup_{j \neq i}X_j$. Let $\mathcal{M}$ be a quasicoherent sheaf of left $\mathcal{O}_X$--modules. Then there is an open cover $\{U_j\}$ of $X$ with $U_j \cong \NCSpec(A_j)$ and $A_j$--modules $M_j$ with $\mathcal{M}|_{U_j} \cong \widetilde{M_j}$. Then since $X_i =\bigcup(X_i \cap U_j)$ for each $i$, but $X_i$ is completely $\cup$--irreducible, we have $X_i \subset U_j$ for some $j$. The same argument applied to $U_j$ shows that $U_j \subset X_k$ for some $k$. Therefore, each $U_j$ is equal to one of the $X_i$. Since the cover $\{X_i\}$ is irredundant, we must have $\{X_i\}_{i=1}^n \subset \{U_j\}$. Thus, for each $1 \le i \le n$, there is a left $R_i$--module $M_i$ with $\mathcal{M}|_{X_i} =\widetilde{M_i}$, and for each $i,j$, there is an isomorphism of sheaves on $X_i \cap X_j$
$$\varphi_{ij} : \widetilde{M_i}|_{X_i \cap X_j} \rightarrow \widetilde{M_j}|_{X_i \cap X_j}$$
satisfying
$$(**) \begin{cases}
\varphi_{ii}=id & \text{on } X_i\\
\varphi_{jk}\varphi_{ij}=\varphi_{ik} & \text{on } X_i \cap X_j \cap X_k\end{cases}$$
for all $i, j, k$.

Conversely, by \cite[Exercise 1.22]{Hartshorne}, the data $(\{M_i\}_{i=1}^n, \{\varphi_{ij}\})$, where $M_i$ is a left $R_i$--module for each $i$ and $\varphi_{ij} : \widetilde{M_i}|_{X_i \cap X_j} \rightarrow \widetilde{M_j}|_{X_i \cap X_j}$ are isomorphisms of sheaves satisfying $(**)$, uniquely determine a sheaf $\mathcal{M}$ of left $\mathcal{O}_X$--modules satisfying $\mathcal{M}|_{X_i} = \widetilde{M_i}$ for each $i$.
%given the data $(\{M_i\}_{i=1}^n, \{\varphi_{ij}\})$ where $M_i$ is a left $R_i$--module for each $i$ and $\varphi_{ij} : \widetilde{M_i}|_{X_i \cap X_j} \rightarrow \widetilde{M_j}|_{X_i \cap X_j}$ are isomorphisms of sheaves satisfying $(**)$, by \cite[Exercise 1.22]{Hartshorne}, there exists a unique sheaf $\mathcal{M}$ of left $\mathcal{O}_X$--modules on $X$ satisfying $\mathcal{M}|_{X_i} = \widetilde{M_i}$ for each $i$.

Thus, there is an equivalence of categories between the category of quasicoherent sheaves of left $\mathcal{O}_X$--modules and the category of data $(\{M_i\}, \{\varphi_{ij}\})$, together with the obvious notion of morphism $(\{M_i\}, \{\varphi_{ij}\})\rightarrow (\{M_i'\}, \{\varphi_{ij}'\})$.
\begin{example}\label{Oreexampleqcoh}
\begin{rm}
Consider quasicoherent sheaves on the space $X$ constructed in Example \ref{Oreexample}. We use the notation of that example. The space $X$ has an open cover by $X_i =\NCSpec(R_i)$ and $X_i \cap X_j \cong \NCSpec(loc(R_i, E_{ij})) \cong \NCSpec(loc(R_j, E_{ji}))$ for each $i, j$. By the above discussion, the category of quasicoherent sheaves on $X$ is equivalent to the category whose objects are data of the form
$$(\{M_i\}, \{\varphi_{ij}\})$$
where $M_i$ is a left $R_i$--module, $1 \le i \le n$, and the $\varphi_{ij}$ are coherence isomorphisms satisfying the following axioms.

For each $1 \le i,j \le n$,
$$\varphi_{ij}: loc(R_i, E_{ij}) \otimes_{R_i} M_i \rightarrow loc(R_j,E_{ji}) \otimes_{R_j} M_j$$
is an isomorphism of abelian groups such that $\varphi_{ij}(rm)=\psi_{ij}(r) \varphi_{ij}(m)$ for all $m \in loc(R_i, E_{ij}) \otimes_{R_i}M_i$ and all $r \in loc(R_i, E_{ij})$, and such that $\varphi_{ij}\varphi_{ji}=\varphi_{ii}=id$ for all $i$ and $j$.

Furthermore, for each $1 \le i,j,k \le n$, the unique morphism of abelian groups
$$\varphi_{ij}^k : loc(R_i, E_{ij} \cup E_{ik})\otimes_{R_i}M_i \rightarrow
loc(R_j, E_{ji} \cup E_{jk}) \otimes_{R_j} M_j$$
which satisfies  $\varphi_{ij}^k(rm)=\psi_{ij}^k(r) \varphi_{ij}^k(m)$ for all $r \in loc(R_i, E_{ij} \cup E_{ik})$ and all $m \in  loc(R_i, E_{ij} \cup E_{ik})\otimes_{R_i}M_i$, and which makes the following diagram commute,
$$
\xymatrix{
loc(R_i, E_{ij}) \otimes_{R_i} M_i \ar[d]_{p_{E_{ij} \cup E_{ik}, E_{ij}}\otimes 1} \ar[rr]^{\varphi_{ij}} && loc(R_j,E_{ji}) \otimes_{R_j} M_j\ar[d]^{p_{E_{ji} \cup E_{jk}, E_{ji}}\otimes 1}\\
loc(R_i, E_{ij} \cup E_{ik})\otimes_{R_i}M_i \ar[rr]^{\varphi_{ij}^k} &&
loc(R_j, E_{ji} \cup E_{jk}) \otimes_{R_j} M_j
}$$
is an isomorphism, and these isomorphisms satisfy $\varphi_{jk}^i \varphi_{ij}^k =\varphi_{ik}^j$ for all $i,j,k$.
%are isomorphisms of abelian groups satisfying $\varphi_{ij}^k(rm)=\psi_{ij}^k(r) \varphi_{ij}^k(m)$ for all $r \in loc(R_i, E_{ij} \cup E_{ik})$ and all $m \in  loc(R_i, E_{ij} \cup E_{ik})\otimes_{R_i}M_i$, and such that $\varphi_{jk}^i \varphi_{ij}^k =\varphi_{ik}^j$ for all $i,j,k$, and the following diagram commutes for all $i,j,k$, where the vertical arrows are the localization maps.
%$$
%\xymatrix{
%loc(R_i, E_{ij}) \otimes_{R_i} M_i \ar[d] \ar[rr]^{\varphi_{ij}} && loc(R_j,E_{ji}) \otimes_{R_j} M_j\ar[d]\\
%loc(R_i, E_{ij} \cup E_{ik})\otimes_{R_i}M_i \ar[rr]^{\varphi_{ij}^k} &&
%loc(R_j, E_{ji} \cup E_{jk}) \otimes_{R_j} M_j
%}$$
%Note that the commutativity condition implies that the $\varphi_{ij}^k$ are in fact determined by the $\varphi_{ij}$.

Morphisms $(\{M_i\}, \{\varphi_{ij}\})\rightarrow (\{M_i'\}, \{\varphi_{ij}'\})$ are collections of maps $(f_i : M_i \rightarrow M_i')$ which commute with the $\varphi_{ij}$.
\end{rm}
\end{example}
\begin{example}{\bf (Quasicoherent sheaves on a noncommutative projective space.)}\begin{rm}
As an example of the notions discussed in Section \ref{sheavesandglueing}, we define an analogue of projective space over a ``polynomial ring" $R$ in skew-commuting variables, and we relate quasicoherent sheaves on this space to graded $R$--modules.

Let $n \in \mathbb{N}$ and for $1 \le i < j \le n$, let $\lambda(i,j)$ be a nonzero complex number. Let $R=\mathbb{C}_\lambda[x_1,x_2, \ldots, x_n]$ be the $\mathbb{C}$--algebra generated by $x_1, x_2, \ldots, x_n$ subject to the relations
$$x_i x_j = \lambda(i,j) x_j x_i$$
for $i <j$. Then $R$ is a graded ring with $x_1, x_2, \ldots, x_n$ in degree $1$, and for each $i$, the powers of $x_i$ form an Ore set in $R$. We denote $loc(R,x_i)$ by $R[x_i^{-1}]$. This is again a graded ring, and we write $R_i$ for the zeroth graded piece $R[x_i^{-1}]_0$. For each $i$ and $j$, the powers of $x_jx_i^{-1}$ form an Ore set in $R_i$, and there is a canonical isomorphism $$\psi_{ij}:R[x_i^{-1}]_0[(x_jx_i^{-1})^{-1}]\rightarrow R[x_j^{-1}]_0[(x_ix_j^{-1})^{-1}],$$ since both of these rings may be identified with the zeroth graded component of the localization of $R$ at $x_i x_j$. In what follows, we suppress the isomorphisms $\psi_{ij}$ and write $R_{ij} = R_{ji} = R[x_i^{-1}]_0[(x_jx_i^{-1})^{-1}]$. We also write $R_{ijk}$ for the zeroth graded component of the localization of $R$ at the set of powers of $x_ix_jx_k$.

We see that the set of rings $R_i$ together with the subsets $E_{ij}=\{x_jx_i^{-1}\}$ satisfy all the hypotheses of Example \ref{Oreexample} and we may therefore glue the $\NCSpec(R_i)$ together along $\NCSpec(R_{ij})$ to obtain a ringed space which we denote by $X$.

We wish to study quasicoherent sheaves on $X$. By Example \ref{Oreexampleqcoh}, a quasicoherent sheaf $\mathcal{M}$ on $X$ consists of the data
$$\mathcal{M} = (\{M_i\} , \{\varphi_{ij}\})$$
where $M_i$ is an $R_i$--module for each $i$, and
$$\varphi_{ij} : R_{ij} \otimes_{R_i} M_i \rightarrow R_{ij} \otimes_{R_j} M_j$$
is an isomorphism of $R_{ij}$--modules, with $\varphi_{ii}=\varphi_{ij}\varphi_{ji}=id$ for all $i,j$, and $\varphi_{jk}\varphi_{ij}=\varphi_{ik}$ as morphisms $R_{ijk} \otimes_{R_i} M_i \rightarrow R_{ijk}\otimes_{R_k} M_k$, for all $i,j,k$. A morphism $f: (\{M_i\}, \{\varphi_{ij}\}) \rightarrow (\{M_i'\},\{\varphi_{ij}'\})$ of quasicoherent sheaves consists of an $R_i$--module map $f_i : M_i \rightarrow M_i'$ for each $i$, with the property that the following diagram commutes for all $i$ and $j$, where the vertical maps are those induced by $f$.
$$\xymatrix{
R_{ij} \otimes_{R_i} M_i \ar[d]_{1 \otimes f_i} \ar[rr]^{\varphi_{ij}} && R_{ij} \otimes_{R_j} M_j \ar[d]^{1\otimes f_j} \\
R_{ij} \otimes_{R_i} M_i' \ar[rr]^{\varphi_{ij}'} &&R_{ij} \otimes_{R_j} M_j'
}$$
By \cite[2.1.16(ii)]{McRob1987}, $R_{ij}$ is a flat right and left $R_i$-- and $R_j$--module for all $i,j$. Therefore, the category $\mathrm{Qcoh}(X)$ is abelian. We wish to relate $\mathrm{Qcoh}(X)$ to graded left $R$--modules, by copying arguments due to Serre from algebraic geometry. We follow very closely the exposition in \cite[Class 30]{vakil}.

Let $R-\mathrm{GrMod}$ denote the category of $\mathbb{Z}$--graded left $R$--modules and graded maps. First, we need to construct a functor $R-\mathrm{GrMod}\rightarrow \mathrm{Qcoh}(X)$. Let $M=\bigoplus_{n \in \mathbb{Z}}M_n$ be a graded left $R$--module. We define a quasicoherent sheaf $\widetilde{M}=(\{\widetilde{M}_i\}, \{\varphi_{ij}\})$ on $X$ by
$$\widetilde{M}_i = (R[x_i^{-1}]\otimes_R M)_0,$$
the zeroth graded component of the graded left $R$--module $R[x_i^{-1}]\otimes_R M$. We define $\varphi_{ij} : R_{ij} \otimes_{R_i}\widetilde{M_i} \rightarrow R_{ij}\otimes_{R_j}\widetilde{M}_j$ by
$$\varphi_{ij}(a \otimes r_{-k} \otimes m_k)=ar_{-k}x_j^k\otimes x_j^{-k} \otimes m_k$$
where $m_k \in M_k$, $r_{-k} \in R[x_i^{-1}]_{-k}$ and $a \in R_{ij}$. Then $\varphi_{ij}$ is a well-defined $R_{ij}$--module isomorphism and the axioms for a quasicoherent sheaf are satisfied. Furthermore, $M \mapsto \widetilde{M}$ is an exact functor $R-\mathrm{GrMod}\rightarrow \mathrm{Qcoh}(X)$ (exactness follows from the flatness of $R[x_i^{-1}]$ over $R$).

Now suppose $\mathcal{M}=(\{M_i\},\{\varphi_{ij}\})$ is a quasicoherent sheaf on $X$. Let $R[x_i^{-1}]_n$ denote the $n^{th}$ graded piece of the graded $R$--module $R[x_i^{-1}]$. We define another quasicoherent sheaf $\mathcal{M}[n]=(\{\mathcal{M}[n]_i\}, \{\varphi_{ij}[n]\})$ by
$$\mathcal{M}[n]_i=R[x_i^{-1}]_n \otimes_{R_i} M_i$$
which is a left $R_i$--module, and we define $\varphi_{ij}[n]$ as follows. We may identify $R_{ij} \otimes_{R_i} R[x_i^{-1}]_n$ with $R[x_i^{-1},x_j^{-1}]_n$ via multiplication, and so we need to define an isomorphism
$$\varphi_{ij}[n]: R[x_i^{-1},x_j^{-1}]_n \otimes_{R_i} M_i \rightarrow R[x_i^{-1},x_j^{-1}]_n \otimes_{R_j} M_j$$
for each $i$ and $j$. For $r \in R[x_i^{-1},x_j^{-1}]_n$ and all $m \in M_i$, we set $\varphi_{ij}[n](r \otimes m) = x_i^n \varphi_{ij} (x_i^{-n} r \otimes m)$. To show that $\mathcal{M}[n]$ is a quasicoherent sheaf, we first check that $\varphi_{ij}[n]$ is a map of $R_{ij}$--modules. This is clear because $\varphi_{ij}$ is a map of $R_{ij}$--modules and therefore $x_i^n \varphi_{ij} (x_i^{-n} r \otimes m)= r \varphi_{ij}(1\otimes m)$.
%Then, using the fact that $\varphi_{ij}$ is an $R_{ij}$--module map, we get that $\varphi_{ij}[n](r_0 r \otimes m)= x_i^n \varphi_{ij} ( x_i^{-n}r_0 r \otimes m) =x_i^n \varphi_{ij}(\beta^{-1}r_0 x_i^{-n} r \otimes m)= \beta^{-1} x_i^{n} r_0 \varphi_{ij}(x_i^{-n} r \otimes m) = r_0 \varphi_{ij}[n](r \otimes m)$ for all $r \in R[x_i^{-1},x_j^{-1}]_n$ and all $m \in M_i$. This is enough to show that $\varphi_{ij}[n]$ is a map of $R_{ij}$--modules.

Since $\varphi_{ij}$ is an $R_{ij}$--module map, the identity $\varphi_{ji}[n]\varphi_{ij}[n]=\varphi_{ii}[n]=id$ holds for all $i$ and $j$, and also $\varphi_{jk}[n]\varphi_{ij}[n]=\varphi_{ik}[n]$ as maps of $R_{ijk}$--modules, for all $i,j,k$. So $\mathcal{M}[n]$ is a quasicoherent sheaf.

We can make $\mathcal{M} \mapsto \mathcal{M}[n]$ into a functor by setting, for $f: \mathcal{M} \rightarrow \mathcal{N}$, $f[n]_i = x_i^n f x_i^{-n} : \mathcal{M}[n]_i \rightarrow \mathcal{N}[n]_i$.

For a quasicoherent sheaf $\mathcal{M}$, we define
$$\Gamma(\mathcal{M})= \bigoplus_{n \in \mathbb{Z}} \Gamma(X,\mathcal{M}[n])$$
which is a graded $R$--module.

Now let $\mathcal{M}=(\{M_i\}, \{\varphi_{ij}\})$ be a quasicoherent sheaf on $X$. We show that $\widetilde{\Gamma(\mathcal{M})} \cong \mathcal{M}$. It suffices to show this on each $X_i$ in the open cover of $X$, so it suffices to give compatible isomorphisms $f_i : \widetilde{\Gamma(\mathcal{M})}_i \rightarrow M_i$. By definition,
$$\widetilde{\Gamma(\mathcal{M})}_i=\left(R[x_i^{-1}]\otimes_R \bigoplus_{n \in \mathbb{Z}} \Gamma(X, \mathcal{M}[n]) \right)_0 =\sum_{j \in \mathbb{Z}} R[x_i^{-1}]_j \otimes_R \Gamma(X, \mathcal{M}[-j]).$$
For $a \otimes m \in R[x_i^{-1}]_j \otimes_R \Gamma(X, \mathcal{M}[-j])$, we define $f_i  (a \otimes m) = a\cdot m|_{X_i}$. This defines a map of $R_i$--modules $\widetilde{\Gamma(\mathcal{M})}_i \rightarrow M_i$. To show that this map is surjective, let $m \in M_i$. %By \cite[2.1.17]{McRob1987}, we may identify $R_{ij}\otimes_{R_i} M_i$ with the zeroth graded component of the module of fractions $M[x_i^{-1},x_j^{-1}]$.
For $N \ge 0$, $\varphi_{ij}[N](x_i^N \otimes m)=x_i^N \varphi_{ij} (1 \otimes m) \in x_i^N R_{ij} \otimes_{R_j} M_j$.
It follows that for $N$ large enough, $x_i^Nm$ is the restriction to $X_i$ of a global section $s$ of $\mathcal{M}[N]$. Then $f(x_i^{-N}\otimes s)=m$ and so $f_i$ is surjective. To show that $f_i$ is injective, suppose $\sum_k a_k \otimes m_k \in \ker(f_i)$.
Then $\sum_k a_k\cdot  m_k |_{X_i}=0$. Choose $N$ large enough so that $x_i^{N}a_k \in R$ for all $k$. Let $u=\sum_k (x_i^Na_k) m_k$, a global section of $\mathcal{M}[N]$. Then for all $j$, $u|_{X_j} \in M_j$ maps to $0$ in $R_{ij}\otimes_{R_j} M_j$. By \cite[2.1.17]{McRob1987}, there exists $N_1 \in \mathbb{N}$ with $x_i^{N_1} (u|_{X_j})=0 \in M_j$ for all $j$. Therefore, $x_i^{N_1} u \in \Gamma(X,\mathcal{M}[N+N_1])$ is the zero section. Then $\sum_k a_k \otimes m_k=\sum_k x_i^{-N_1-N}x_i^{N_1+N}a_k \otimes m_k=x_i^{-N_1-N} \otimes x_i^{N_1+N} \sum_k a_k m_k=x_i^{-N_1-N} \otimes x_i^{N_1} u=0$.
%Let $u = \sum_k a_k\cdot  m_k \in \Gamma(X, \mathcal{M})$. Then for all $j$, $u|_{X_j} \in M_j$ maps to $0$ in $R_{ij}\otimes_{R_j} M_j$. By \cite[2.1.17]{McRob1987}, there exists $N \in \mathbb{N}$ with $x_i^N (u|_{X_j})=0 \in M_j$ for all $j$. Therefore, $x_i^N u \in \Gamma(X,\mathcal{M}[N])$ is the zero section. Choose $N_1 \ge N$ large enough so that $x_i^{N_1}a_k \in R$ for all $k$.
%Then there exists $N$ such that for all $k$, $x_i^N a_k \in R$. So $\sum_k a_k \otimes m_k=\sum_k x_i^{-N}x_i^Na_k \otimes m_k=x_i^{-N} \otimes x_i^N \sum_k a_k m_k=0$.
Therefore, $f_i$ is the required isomorphism of $R_i$--modules $\widetilde{\Gamma(\mathcal{M})}_i \rightarrow M_i$. The $f_i$ agree on $X_i \cap X_j$ and so we obtain an isomorphism of sheaves $\widetilde{\Gamma(\mathcal{M})} \rightarrow \mathcal{M}$.

Next, we show that if $M \in R-\mathrm{GrMod}$ then there is a natural map $M \rightarrow \Gamma(\widetilde{M})$. We need to define a map $M_n \rightarrow \Gamma(X, \widetilde{M}[n])$ for each $n \in \mathbb{Z}$. On $X_i$, $\widetilde{M}[n]$ corresponds to the $R_i$--module $R[x_i^{-1}]_n \otimes_{R_i} (R[x_i^{-1}]\otimes_R M)_0$. For $m \in M_n$, we define $m_i: = x_i^n\otimes x_i^{-n} \otimes m \in R[x_i^{-1}]_n \otimes_{R_i} (R[x_i^{-1}]\otimes_R M)_0$. Then $\varphi_{ij}[n](m_i)=m_j$ for all $i$ and $j$, so the $m_i$ glue together to give a global section $\overline{m}$ of $\widetilde{M}[n]$. Then $m \mapsto \overline{m}$ defines a map
$\gamma_M : M \rightarrow \Gamma(\widetilde{M})$
of graded $R$--modules. These maps are natural, meaning that they define a natural transformation $id \rightarrow \Gamma \circ \widetilde{(\cdot)}$ of functors $R-\mathrm{GrMod} \rightarrow R-\mathrm{GrMod}$.

Now we define a subcategory $R-\mathrm{Tors}$ of $R-\mathrm{GrMod}$ to be the full subcategory of those objects $M$ such that for all $m \in M$ there exists $N\in\mathbb{N}$ such that $R_d m =0$ for all $d \ge N$. Note that if $M \in R-\mathrm{GrMod}$ then $\ker(\gamma_M)$ and $\mathrm{cok}(\gamma_M)$ belong to $R-\mathrm{Tors}$. Note also that if $M \in R-\mathrm{Tors}$ then $\widetilde{M}=0$.

Now recall (see for example \cite{popescu}) that the quotient category $\frac{R-\mathrm{GrMod}}{R-\mathrm{Tors}}$ is defined by the following universal property:

$\frac{R-\mathrm{GrMod}}{R-\mathrm{Tors}}$ is an additive category and there exists an additive functor $Q: R-\mathrm{GrMod} \rightarrow \frac{R-\mathrm{GrMod}}{R-\mathrm{Tors}}$ such that $Q(a)$ is an isomorphism for every arrow $a$ with $\ker(a), \mathrm{cok}(a) \in R-\mathrm{Tors}$, and furthermore if $\mathcal{C}$ is an additive category and $F: R-\mathrm{GrMod} \rightarrow\mathcal{C}$ is an additive functor such that $F(a)$ is an isomorphism for all $a$ with $\ker(a), \mathrm{cok}(a) \in R-\mathrm{Tors}$, then there exists a unique additive functor $G: \frac{R-\mathrm{GrMod}}{R-\mathrm{Tors}} \rightarrow \mathcal{C}$ such that the following diagram commutes.
$$\xymatrix{
R-\mathrm{GrMod} \ar[d]_{F} \ar[r]^<<<Q & \frac{R-\mathrm{GrMod}}{R-\mathrm{Tors}} \ar[ld]^G\\
\mathcal{C} &
}$$
We claim that the functor $Q: R-\mathrm{GrMod} \rightarrow \mathrm{Qcoh}(X)$ defined by $Q(M)=\widetilde{M}$ satisfies this universal property. Indeed, if $F: R-\mathrm{GrMod} \rightarrow \mathcal{C}$ is an additive functor such that $F(a)$ is an isomorphism for all $a$ with $\ker(a), \mathrm{cok}(a) \in R-\mathrm{Tors}$, we may define $G(\mathcal{M})=F(\Gamma(\mathcal{M}))$, and then for $M \in R-\mathrm{GrMod}$, $G(\widetilde{M})=F({\Gamma(\widetilde{M})})\cong F(M)$. After checking the appropriate naturality conditions, the following proposition can be obtained.
\begin{prop}\label{qcoh=grmod/tors}
The category $\mathrm{Qcoh}(X)$ is equivalent to $\frac{R-\mathrm{GrMod}}{R-\mathrm{Tors}}$. \end{prop}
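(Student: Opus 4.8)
The plan is to show that the exact functor $M \mapsto \widetilde{M}$ from $R-\mathrm{GrMod}$ to $\mathrm{Qcoh}(X)$ satisfies, up to natural isomorphism, the universal property of the Serre quotient $\frac{R-\mathrm{GrMod}}{R-\mathrm{Tors}}$ recalled above; since that universal property --- read, as is standard, up to natural isomorphism --- pins down its target up to equivalence, the proposition follows formally. Everything needed is already available: $M \mapsto \widetilde{M}$ is exact by flatness of $R[x_i^{-1}]$ over $R$; it vanishes on $R-\mathrm{Tors}$; there is a natural isomorphism $\widetilde{\Gamma(\mathcal{M})} \cong \mathcal{M}$; and the natural transformation $\gamma: \mathrm{id} \to \Gamma \circ \widetilde{(\,\cdot\,)}$ has $\ker(\gamma_M)$ and $\mathrm{cok}(\gamma_M)$ in $R-\mathrm{Tors}$ for every $M$.

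First I would check that $M \mapsto \widetilde{M}$ inverts every morphism $a: M \to N$ of graded left $R$--modules with $\ker(a), \mathrm{cok}(a) \in R-\mathrm{Tors}$. Writing $I = \im(a)$ and factoring $a$ as $M \twoheadrightarrow I \hookrightarrow N$, the short exact sequences $0 \to \ker(a) \to M \to I \to 0$ and $0 \to I \to N \to \mathrm{cok}(a) \to 0$ are carried by the exact functor $M \mapsto \widetilde{M}$ to short exact sequences in which $\widetilde{\ker(a)} = \widetilde{\mathrm{cok}(a)} = 0$; hence $\widetilde{M} \to \widetilde{I}$ and $\widetilde{I} \to \widetilde{N}$ are isomorphisms whose composite is $\widetilde{a}$.

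Next, given an additive functor $F: R-\mathrm{GrMod} \to \mathcal{C}$ inverting all such morphisms, I would set $G := F \circ \Gamma: \mathrm{Qcoh}(X) \to \mathcal{C}$. This $G$ is additive, since $\Gamma$ (a direct sum over $n \in \mathbb{Z}$ of global sections composed with the twists $\mathcal{M} \mapsto \mathcal{M}[n]$) and $F$ are. Applying $F$ to $\gamma_M$ gives, for each $M$, an isomorphism $F(M) \xrightarrow{\ \sim\ } F(\Gamma(\widetilde{M})) = G(\widetilde{M})$, natural in $M$, so $G \circ \widetilde{(\,\cdot\,)} \cong F$. For uniqueness, the natural isomorphism $\widetilde{\Gamma(\mathcal{M})} \cong \mathcal{M}$ shows that $M \mapsto \widetilde{M}$ is essentially surjective and that, under it, any morphism $f: \mathcal{M} \to \mathcal{N}$ of $\mathrm{Qcoh}(X)$ is identified with $\widetilde{\Gamma(f)}$; hence any additive $G'$ with $G' \circ \widetilde{(\,\cdot\,)} \cong F$ must agree with $F \circ \Gamma$ on objects and morphisms up to natural isomorphism. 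Together with the first step, this exhibits the universal property, so the induced functor $\frac{R-\mathrm{GrMod}}{R-\mathrm{Tors}} \to \mathrm{Qcoh}(X)$ is an equivalence, with quasi-inverse $\mathcal{M} \mapsto Q(\Gamma(\mathcal{M}))$.

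The only genuine, if routine, work is the naturality bookkeeping: one must verify that the isomorphism $\widetilde{\Gamma(\mathcal{M})} \cong \mathcal{M}$, built fibrewise over the cover $X = \bigcup_i X_i$, is natural in $\mathcal{M}$ and compatible with the gluing isomorphisms $\{\varphi_{ij}\}$, and that the squares relating $\gamma$, $\Gamma$ and $F$ commute --- these are the ``appropriate naturality conditions''. I do not expect a serious obstacle there: the substantive facts (exactness of $M \mapsto \widetilde{M}$, the vanishing on $R-\mathrm{Tors}$, and the two natural maps $\widetilde{\Gamma(\mathcal{M})} \cong \mathcal{M}$ and $\gamma$) have all been established in the preceding discussion, and the proposition is essentially their formal assembly. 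The single point where real care is needed is confirming that the morphism identification $f \leftrightarrow \widetilde{\Gamma(f)}$ does make the induced functor full and faithful, and not merely essentially surjective.
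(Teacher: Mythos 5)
Your proof is correct and follows essentially the same route as the paper: the paper also establishes the proposition by showing that $M \mapsto \widetilde{M}$ satisfies the universal property of the quotient category, defining $G(\mathcal{M}) = F(\Gamma(\mathcal{M}))$ and using $\widetilde{\Gamma(\mathcal{M})} \cong \mathcal{M}$ together with the fact that $\gamma_M$ has kernel and cokernel in $R-\mathrm{Tors}$, leaving the naturality bookkeeping to the reader. Your explicit verification that $\widetilde{(\,\cdot\,)}$ inverts morphisms with torsion kernel and cokernel (by factoring through the image and using exactness plus vanishing on $R-\mathrm{Tors}$) is a detail the paper leaves implicit, but the argument is the same.
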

Proposition \ref{qcoh=grmod/tors} is interesting because there is a philosophy due to Artin and Zhang \cite{ArtinZhang} of regarding the category $\frac{R-\mathrm{GrMod}}{R-\mathrm{Tors}}$ over a graded ring $R$ as the category of quasicoherent sheaves on some space $\mathrm{Proj}(R)$ associated to $R$, without necessarily assuming that such a space exists. %This philosophy has proven to be quite fruitful in noncommutative algebra.
Proposition \ref{qcoh=grmod/tors} shows that, in the given example, $\frac{R-\mathrm{GrMod}}{R-\mathrm{Tors}}$ may be identified with a category of sheaves on a ringed space which is an analogue of the usual
commutative $\mathrm{Proj}$.
\end{rm}
\end{example}
\section{Conclusion and questions}\label{conclusion}
In Section \ref{sheavesandglueing}, we have begun to attempt to generalize commutative algebraic geometry to the spaces $\NCSpec(R)$. This is an interesting project, and we hope to return to it in future work. However, it may be that the spaces $\NCSpec(R)$ are not suitable for doing geometry. In Example \ref{ssa} and Proposition \ref{ncspec=expspec}, we see that $\NCSpec(R)$ has more points than we probably really want. This is not surprising, because what we have really done is to construct a poset which we want to be the lattice of open subsets of a topological space (a \emph{locale}), and then we have added points in an abstract way in order to make it into a space. It may be that a more profitable direction of research would be to look for spaces for which Theorem \ref{maintheorem} holds, but which are smaller than $\NCSpec(R)$ and therefore more manageable. It may even be that there is a way to describe topologically a subspace of $\NCSpec(R)$ which corresponds to $\Spec(R)$ in the case when $R$ is commutative.

Another interesting question is to study whether the construction $\NCSpec(R)$ is related to the many other versions of noncommutative affine scheme which have been invented, some of which have had successful applications in representation theory. We discuss a few of these below, but there are many more which we do not mention. See for example \cite{Skoda} for a survey.

We mention in particular the \emph{schematic algebras} defined by Van Oystaeyen in \cite{VanObook}, and the noncommutative schemes defined by Rosenberg \cite{Rosenbergbook}, \cite{Rosenberg98}, and Kontsevich-Rosenberg \cite{KR}. We mention these notions in particular because \cite[Theorem 2.1.5]{VanObook} is a much more general version of Proposition \ref{qcoh=grmod/tors}, and the main theorem of \cite[Section 4.2]{Rosenberg98} resembles Theorem \ref{maintheorem2}. Our $\NCSpec(R)$ may also be related to a construction which appears in unpublished work of Schofield and which is mentioned in \cite[Chapter 7]{Schofieldbook}.
Other constructions which resemble ours to some extent include those of \cite{RosenbergLevitzki} and \cite{PMCohn}. Our construction differs from many of these in two ways. First, it is less algebraic and requires only the notion of abstract localization to construct, and second, we are able to prove Theorem \ref{maintheorem}. However, our $\NCSpec$ suffers from many of the same problems as earlier constructions. In particular, it seems to be very difficult to calculate explicit examples.

\bibliographystyle{alpha}
%\bibliography{penguin}

\begin{thebibliography}{Kap70}

\bibitem[AZ94]{ArtinZhang}
M.~Artin and J.~J. Zhang.
\newblock Noncommutative projective schemes.
\newblock {\em Adv. Math.}, 109(2):228--287, 1994.

\bibitem[Coh72]{PMCohn}
P.~M. Cohn.
\newblock Skew fields of fractions, and the prime spectrum of a general ring.
\newblock In {\em Lectures on rings and modules ({T}ulane {U}niv. {R}ing and
  {O}perator {T}heory {Y}ear, 1970-1971, {V}ol. {I})}, pages 1--71. Lecture
  Notes in Math., Vol. 246. Springer, Berlin, 1972.

\bibitem[Coh06]{Cohn}
P.~M. Cohn.
\newblock Localization in general rings, a historical survey.
\newblock In {\em Non-commutative localization in algebra and topology}, volume
  330 of {\em London Math. Soc. Lecture Note Ser.}, pages 5--23. Cambridge
  Univ. Press, Cambridge, 2006.

\bibitem[EH00]{EisenbudHarris}
D. Eisenbud and J. Harris.
\newblock {\em The geometry of schemes}, volume 197 of {\em Graduate Texts in
  Mathematics}.
\newblock Springer-Verlag, New York, 2000.

\bibitem[Gro60]{EGA1}
A.~Grothendieck.
\newblock \'{E}l\'ements de g\'eom\'etrie alg\'ebrique. {I}. {L}e langage des
  sch\'emas.
\newblock {\em Inst. Hautes \'Etudes Sci. Publ. Math.}, (4):228, 1960.

\bibitem[Har77]{Hartshorne}
R. Hartshorne.
\newblock {\em Algebraic geometry}.
\newblock Springer-Verlag, New York, 1977.
\newblock Graduate Texts in Mathematics, No. 52.

\bibitem[Hof79]{Hoffmann}
R. E. Hoffmann.
\newblock Sobrification of partially ordered sets
\newblock {\em Semigroup Forum}, 17(2):123-138, 1979.

\bibitem[Joh82]{Stone}
P.~T. Johnstone.
\newblock {\em Stone spaces}, volume~3 of {\em Cambridge Studies in Advanced
  Mathematics}.
\newblock Cambridge University Press, Cambridge, 1982.

\bibitem[Kap70]{Kaplansky}
I. Kaplansky.
\newblock {\em Commutative rings}.
\newblock Allyn and Bacon Inc., Boston, Mass., 1970.

\bibitem[KR00]{KR}
M. Kontsevich and A.~L. Rosenberg.
\newblock Noncommutative smooth spaces.
\newblock In {\em The Gelfand Mathematical Seminars, 1996--1999}, Gelfand Math.
  Sem., pages 85--108. Birkh\"auser Boston, Boston, MA, 2000.

\bibitem[Lad08]{Ladkani}
S. Ladkani.
\newblock On derived equivalences of categories of sheaves over finite posets.
\newblock {\em J. Pure Appl. Algebra}, 212(2):435--451, 2008.

\bibitem[MR87]{McRob1987}
J.~C. McConnell and J.~C. Robson.
\newblock {\em Noncommutative {N}oetherian rings}.
\newblock Pure and Applied Mathematics (New York). John Wiley \& Sons Ltd.,
  Chichester, 1987.
\newblock With the cooperation of L. W. Small, A Wiley-Interscience
  Publication.

\bibitem[PP79]{popescu}
N.~Popescu and L.~Popescu.
\newblock {\em Theory of categories}.
\newblock Martinus Nijhoff Publishers, The Hague, 1979.

\bibitem[Ros90]{RosenbergLevitzki}
Alexander~L. Rosenberg.
\newblock The left spectrum, the {L}evitzki radical, and noncommutative
  schemes.
\newblock {\em Proc. Nat. Acad. Sci. U.S.A.}, 87(21):8583--8586, 1990.

\bibitem[Ros95]{Rosenbergbook}
A.~L. Rosenberg.
\newblock {\em Noncommutative algebraic geometry and representations of
  quantized algebras}, volume 330 of {\em Mathematics and its Applications}.
\newblock Kluwer Academic Publishers Group, Dordrecht, 1995.

\bibitem[Ros98]{Rosenberg98}
A.~L. Rosenberg.
\newblock Noncommutative schemes.
\newblock {\em Compositio Math.}, 112(1):93--125, 1998.

\bibitem[RV70]{Reis}
C.~M. Reis and T.~M. Viswanathan.
\newblock A compactness property for prime ideals in {N}oetherian rings.
\newblock {\em Proc. Amer. Math. Soc.}, 25:353--356, 1970.

\bibitem[Sch85]{Schofieldbook}
A.~H. Schofield.
\newblock {\em Representation of rings over skew fields}, volume~92 of {\em
  London Mathematical Society Lecture Note Series}.
\newblock Cambridge University Press, Cambridge, 1985.

\bibitem[Sha94]{Shafo}
I.~R. Shafarevich.
\newblock {\em Basic algebraic geometry. 2}.
\newblock Springer-Verlag, Berlin, second edition, 1994.
\newblock Schemes and complex manifolds, Translated from the 1988 Russian
  edition by Miles Reid.

\bibitem[{\v{S}}ko06]{Skoda}
Zoran {\v{S}}koda.
\newblock Noncommutative localization in noncommutative geometry.
\newblock In {\em Non-commutative localization in algebra and topology}, volume
  330 of {\em London Math. Soc. Lecture Note Ser.}, pages 220--313. Cambridge
  Univ. Press, Cambridge, 2006.

\bibitem[Smi71]{WWSmith}
W.~W. Smith.
\newblock A covering condition for prime ideals.
\newblock {\em Proc. Amer. Math. Soc.}, 30:451--452, 1971.

\bibitem[Vak07]{vakil}
R.~Vakil.
\newblock Math 216: {F}oundations of {A}lgebraic {G}eometry (lecture notes).
\newblock 2007.
\newblock \texttt{http://math.stanford.edu/\~{}vakil/216/}.

\bibitem[VO00]{VanObook}
F. Van~Oystaeyen.
\newblock {\em Algebraic geometry for associative algebras}, volume 232 of {\em
  Monographs and Textbooks in Pure and Applied Mathematics}.
\newblock Marcel Dekker Inc., New York, 2000.

\end{thebibliography}
%\end{document}

\end{document}